\newcommand{\plane}{\mathbb{C}}
\newcommand{\disk}{{\mathbb D}}
\newcommand{\ucirc}{ {\mathbb S}}
\newcommand{\0}{\emptyset}
\newcommand{\lam}{{\mathcal L}}
\newcommand{\cl}{\overline}
\newtheorem{thm}{Theorem}[section]
\newtheorem{lem}[thm]{Lemma}
\newtheorem{cor}[thm]{Corollary}
\newtheorem{prop}[thm]{Proposition}
\theoremstyle{definition}
\newtheorem{ques}{Question}
\newtheorem{defn}[thm]{Definition}
\theoremstyle{remark}
\newtheorem{rem}[thm]{Remark}
 \newcommand{\ol}{\overline}
\newcommand\sphere{\mathbb{C}_\infty}
\title{Fixed Flowers}
\author{Md. Abdul Aziz}
\email[Md. Abdul Aziz]{azizm@uab.edu}
\author{Brittany E. Burdette}
\email[Brittany E. Burdette]{beburd@uab.edu}
\author{John C. Mayer}
\email[John C. Mayer]{jcmayer@uab.edu}
\date{November 2022}
\begin{document}

\begin{abstract}
    Laminations are a combinatorial and topological way to study Julia sets. Laminations give information about the structure of parameter space of degree $d$ polynomials with connected Julia sets. We first study fixed point portraits in laminations and their respective global count. Then, we investigate the correspondence between locally unicritical laminations and locally maximally critical laminations with rotational polygons. The global correspondence has been shown in \cite{Burdette:2022}. 
\end{abstract}

\maketitle

% \section{things that need to be done}

% \begin{itemize}
%     \item check proof for theorem \ref{no adj rot}
%     \item maybe add thm: a locally unicritical lamination implies the existence of a compatible FPP sublamination

%     \item fix spacing in document (will do after all changes have been made)
    
% \end{itemize}

\section{Introduction}

\subsection{Motivation}

What are ``rotational sets" of laminations of the closed unit disk $\ol\disk$ under the action of the {\em angle $d$-tupling map}, $\sigma_d$ on the unit circle $\ucirc$ and why try to understand their relationship to fixed objects in a lamination? Laminations are a topological and combinatorial model of the connected Julia sets of polynomials considered as functions of the complex plane $\plane$.  Such models are used both to understand specific types of polynomials and their Julia sets and to study the parameter spaces of polynomials. For example, the well-known Mandelbrot set is the parameter space of quadratic polynomials (degree $d=2$) with connected Julia set. The so-called hyperbolic components of that parameter space are of interest, including how they are connected to each other, how they are arranged in the Mandelbrot set, and how many components there are that are associated with attractive orbits (of the associated polynomials) of a given period, fixed points of a given rotation number, and the like.  These terms are all defined below.  Our research is concerned with rotation sets in laminations of polynomials of higher degree ($d>2$) about which much less is currently understood.

Laminations are composed of {\em leaves} (chords of the closed unit disk $\ol\disk$) which form a closed collection of non-crossing line segments that are forward and backward invariant under a natural extension of the  map $\sigma_d$ (defined below) on the unit circle $\ucirc$. A {\em rotational set} (defined below) is a set of points of $\ucirc$ carried forward by $\sigma_d$ preserving their circular order. Leaves connecting points of a rotational set in circular order form polygons in the lamination. There is a correspondence between rotational polygons in laminations and fixed points of polynomials that have a non-zero infinitesimal rotation number (determined by the argument of the derivative of the polynomial at the fixed point). Such polygons are in correspondence to a fundamental class of hyperbolic components of the parameter space of degree $d$ polynomials with connected Julia set. Fixed points of polynomials corresponding to such rotational sets were studied in the polynomial setting by Goldberg in \cite{Goldberg1} and by Goldberg and Milnor in \cite{Goldberg2}. By studying them in the laminational setting, we do not first assume that there is a corresponding polynomial.

%\begin{figure}
    %\centering
    %\includegraphics[width=\textwidth]{bottcher.png}
    %\caption{B\"{o}ttcher's version of the Riemann Mapping Theorem. The Julia set is that of the polynimial $P(z) = z^2 + (-0.12 + 0.78i)$.  The checkerboards correspond to binary coordinates.}
 %   \label{fig:bottcher}
%\end{figure}

The connection between the Julia set of a polynomial and its corresponding lamination is made by B\"{o}ttcher's version of the Riemann Mapping Theorem.  If the Julia set of the polynomial $P(z)$ of degree $d$ is connected, then the unbounded component $B_\infty$ of the complement of the Julia set is topologically a disk  about the point $\infty$.  Let $\disk_\infty$ denote the complement of the closed unit disk in the compactified plane, the Riemann sphere $\sphere$.  By 
B\"{o}ttcher's Theorem (shown below), there is a complex analytic homeomorphism $\phi:\disk_\infty\to B_\infty$ that conjugates the power function $z \mapsto z^d$ to the action of $P$ on $B_\infty$.  In $\disk_\infty$ there are polar coordinates, concentric circles and radial lines from the boundary $\ucirc$ of the unit disk $\disk$ out toward $\infty$. Each radial line is associated with an angle in the usual way. The map $\phi$ carries this system of coordinates onto $B_\infty$. In $B_\infty$ it is customary to call the images of the radial lines {\em external rays} encoded by their angle. We should distinguish between the Julia set, which is the boundary of $B_\infty$ and the {\em filled} Julia set, which is that boundary together with what it bounds.

In case the Julia set of $P$ is locally connected, then by a theorem of Carath\'eodory all these external rays ``land" at points of the Julia set, and every point of the Julia set is a landing point of such a ray. Often more than one external ray lands at the same point. If two rays land at the same point of the Julia set, then in the corresponding lamination, those two points on $\ucirc$ are joined by a chord of $\ol\disk$, and thus a leaf of the lamination is born. By studying laminations in the abstract, without having a particular polynomial or  Julia set in mind, we aim to reverse this process and move from a lamination to a topological Julia set to a specific polynomial and its actual Julia set. Thus, we aim to understand what is possible for laminations and use that information to constrain what is possible for locally connected Julia sets of polynomials.

\begin{figure}[H]
\includegraphics[width= 14cm]{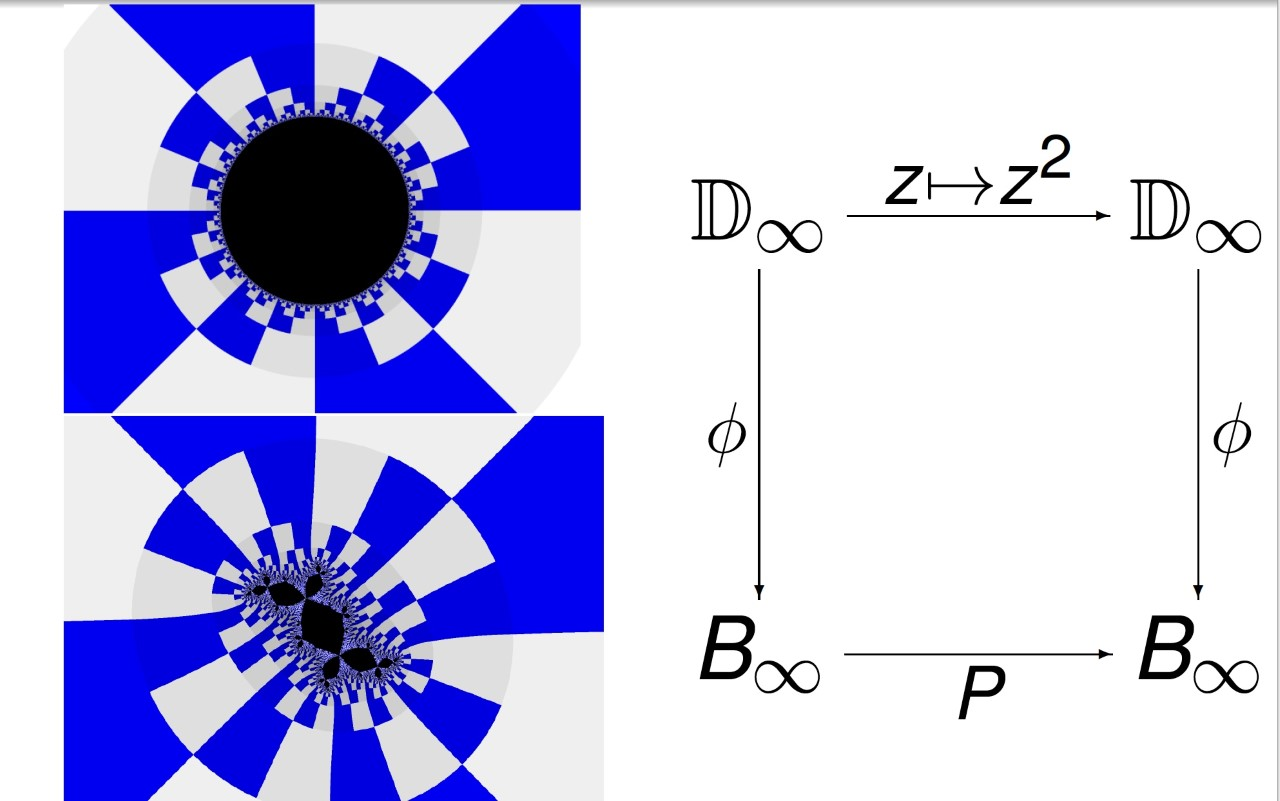}

\caption[Pinching the Circle for a Rabbit Julia Set]{On the left, we see trivial Julia set and the famous Douady Rabbit Julia set. We pinch the disk at certain angles to get the Douady Rabbit Julia set corresponding to $P(z) = z^2 + (-0.12 + 0.78i)$.  The checkerboards correspond to binary coordinates. On the right, we have the B\"{o}ttcher's Diagram mentioned above.} \label{bott}

\end{figure}

% {\huge
% \centering
%  B\"{o}ttcher's Theorem 
% \begin{displaymath}
% \xymatrix@=20ex{
% {\Disk_\infty}\ar[r]^{z \mapsto z^d} \ar[d]_{\phi}
% &{\Disk_\infty} \ar[d]^{\phi}\\
% {B_\infty} \ar[r]_{P}
% &{B_\infty}
% }
% \end{displaymath}

% }

We cite \cite{Blokh:2021} for specific motivation for this paper. The authors introduce the concept of ``flower-like sets" which give motive to expand the global correspondence in \cite{Burdette:2022} to a more ``local" correspondence.

\subsection{Preliminary Definitions}

We cite the following preliminary definitions from \cite{Burdette:2022}.

\begin{defn}[$\sigma_d$ map] \label{dmap}
We use ``d-nary" coordinates on the circle $\ucirc$.
The map $\sigma_d : \ucirc \to \ucirc$ is defined to be $\sigma_d (t) = dt \pmod{1}$. For example, $\sigma_2$ has binary coordinates, $\sigma_3$ has ternary, $\sigma_4$ has quarternary, and so on.
\end{defn}

\begin{rem}
    As we are in binary in $\sigma_2$, the digits 0 and 1 are used. For example $\_001$ represents $\dfrac{1}{7}$. The digits following a ``\_" are repeated. For an arbitrary $d$-nary coordinate, a geometric series calculation can be performed to retrieve its fractional equivalent.
\end{rem}

\begin{defn}[Lamination]
A {\em lamination}, $\lam$, is a collection of chords of the closed unit disk, $\cl\disk$, which we call {\em leaves}, such that:
\begin{enumerate}
    \item Any two leaves of $\lam$ meet, if at all, in a point of the circle $\ucirc$.
    \item  $\lam^*:=\ucirc\cup\{\cup\lam\}$ is a closed subset of $\cl\disk$.
\end{enumerate}
In the case that condition (1) only is met, we call our collection of chords a {\em pre-lamination}.
\end{defn}

The following map is combinatorial; it tells where the endpoints of leaves map but not where points on the leaves map. There is an obvious continuous linear extension of this map to leaves. We introduce the extension, $\sigma^\#_d$, to the entire disk in Section \ref{section:fixed}.

\begin{defn}[Leaf Mapping]
We denote mapping a leaf, ${\ell} = \cl{ab}$, with endpoints $a$ and $b$, under the map $\sigma_d$ by $$\sigma_d (\ell) = \cl{\sigma_d(a) \sigma_d(b)}$$

\end{defn}

\begin{defn}[Critical Chords]
A chord, $\ell = \cl{ab}$, is called {\em critical} when both its endpoints map to a single point $\sigma_d(a) = \sigma_d(b)\in\ucirc$. 
\end{defn}

\begin{defn}[All Critical Polygon]
A polygon whose endpoints all map to a single point is called an {\em all critical polygon}.
\end{defn}

\begin{defn}[Sibling Leaves]
Let $\ell_1\in\lam$ be a  leaf  and suppose
$\sigma_d(\ell_1)=\ell'$, for some non-degenerate leaf $\ell'\in\lam$.
A  leaf
$\ell_2\in\lam$, disjoint from $\ell_1$, is called a {\em sibling} of
$\ell_1$ provided $\sigma_d(\ell_2)=\ell'=\sigma_d(\ell_1)$.  A collection
${\mathcal S}=\{\ell_1, \ell_2, \dots, \ell_d\}\subset\lam$ is called a
{\em full sibling collection} provided that for each $i$,
$\sigma_d(\ell_i)=\ell'$ and for all $i\not=j$,
$\ell_i\cap\ell_j=\0$.
\end{defn}

\begin{defn}[Sibling Invariant Lamination]\label{def: Sibling invariant lam}
A lamination $\lam$ is said to be {\em sibling $d$-invariant} (or simply {\em invariant} if no confusion will result) provided that the following three statements hold:
\begin{enumerate}
\item (Forward Invariant) For every $\ell\in\lam$, $\sigma_d(\ell)\in\lam$.
\item (Backward Invariant) For every non-degenerate $\ell'\in\lam$,
there is a leaf $\ell\in\lam$ such that $\sigma_d(\ell)=\ell'$.
\item (Sibling Invariant) For every $\ell_1\in\lam$ with $\sigma_d(\ell_1)=\ell'$,
a non-degenerate leaf, there is a  full sibling collection $\{\ell_1, \ell_2, \dots, \ell_d\}\subset\lam$ such that  $\sigma_d(\ell_i)=\ell'$.
\end{enumerate}
\end{defn}

\begin{rem}
A sibling $d$-invariant lamination induces an equivalence relation. Two points on $\ucirc$ are equivalent if they are joined by a finite concatenation of leaves. We consider laminations for which this results in a closed equivalence relation. Thus, the sibling invariant laminations we will be considering have a fourth condition from \cite{mimbs:2013} not listed in the definition:
\begin{enumerate}
    \item[(4)] $\lam$ has finite equivalence classes, and all leaves are boundary chords of the convex hulls of equivalence classes.
\end{enumerate}

\end{rem}

\begin{defn}[Gap] A {\em gap} in a lamination, $\lam$, is the closure of a component of $\cl\disk\setminus\lam^*$.  A gap is {\em critical} iff two points in its boundary map to the same point. A gap with finitely many leaves in its boundary is usually called a {\em polygon}.  The leaves bounding a finite gap are called the {\em sides} of the polygon.  
\end{defn}

\begin{defn}[Fatou Gap]\label{def:fatougap}
A {\em Fatou gap} in a lamination is a gap whose boundary intersected with $\mathbb{S}$ contains a Cantor set. 
\end{defn}

\begin{prop}[Order Preserving \cite{mimbs:2013}]
On both finite and infinite gaps of a $d$-invariant lamination, the map $\sigma_d$ preserves circular order.
\end{prop}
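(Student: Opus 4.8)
The plan is to reduce the proposition to one assertion: for any three points $x,y,z\in\bd G\cap\ucirc$ occurring in positive cyclic order on $\ucirc$, the images $\sigma_d(x),\sigma_d(y),\sigma_d(z)$ occur in weakly positive cyclic order. The essential input is that $\sigma_d$, together with its continuous extension over leaves and across the disk, is \emph{orientation preserving}. Note that order preservation genuinely fails for arbitrary triples (for instance $\{0,0.3,0.6\}$ is reversed by $\sigma_2$), so the hypothesis that the triple consists of vertices of a \emph{gap} of an invariant lamination must be used essentially. Forward invariance supplies exactly this structure: since every boundary leaf of $G$ maps to a leaf of $\lam$, the images cross no other leaf, and the complementary component $G$ is carried onto a complementary component.

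Concretely, I would work with the continuous extension $\sigma_d^\#$ of $\sigma_d$ to $\cl\disk$ (the linear extension over each leaf, referenced above). The core structural claim to establish first is that $\sigma_d^\#$ is orientation preserving and maps each gap $G$ onto a gap $G'$, carrying $\bd G$ onto $\bd G'$. Granting this, the proposition is nearly immediate and uniform in the finite/infinite cases: an orientation-preserving continuous surjection of closed disks restricts to a cyclic-order-preserving (possibly collapsing) map of boundary circles $\bd G\to\bd G'$. Because the leaves of $\bd G$ do not cross, the cyclic order of $\bd G\cap\ucirc$ along the boundary curve of $G$ agrees with their cyclic order on $\ucirc$, and likewise the images along $\bd G'$ agree with cyclic order on $\ucirc$ by non-crossing of the $\sigma_d(\ell)$; hence $\sigma_d$ preserves cyclic order on $\bd G\cap\ucirc$.

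To make the finite case transparent I would phrase the orientation step as a contradiction: if $G$ is a polygon with vertices $v_0,\dots,v_{n-1}$ in positive cyclic order and some triple of images $\sigma_d(v_i),\sigma_d(v_j),\sigma_d(v_k)$ were reversed, then the positively-oriented boundary $\bd G$ would map to a negatively-oriented boundary circuit of $G'$, contradicting orientation-preservation of $\sigma_d^\#$. For an infinite (Fatou) gap the same orientation argument applies verbatim once $G'$ is known to be a gap; alternatively, one proves cyclic-order preservation on the dense set of endpoints of boundary leaves (to which the polygon argument applies on finite sub-collections) and then extends to all of $\bd G\cap\ucirc$ by continuity of $\sigma_d$, cyclic order being a closed condition.

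The main obstacle is exactly the structural claim I isolated: verifying rigorously that $\sigma_d^\#$ is orientation preserving on $\cl\disk$ and that it carries gaps onto gaps. This is where backward and sibling invariance enter, to rule out folding and to guarantee the image is again a single complementary component rather than something degenerate. The secondary difficulty is the critical case: when two boundary points of $G$ share an image, $G'$ collapses to a leaf or a point and several $\sigma_d(v_i)$ coincide, so ``order preserving'' must be read in the weak sense, and I would need to check that the orientation argument degenerates to equalities rather than producing a genuine reversal.
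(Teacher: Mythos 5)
You have correctly isolated where the difficulty lives, but isolating it is all the proposal does: the whole argument is reduced to the claim that the extension $\sigma_d^\#$ is an orientation-preserving map of $\cl\disk$ carrying each gap onto a gap (or leaf, or point) with boundary going to boundary, and that claim is left unproven. The problem is that this is not an available input --- it is essentially the proposition itself in topological clothing, and the logical order in the literature is the reverse of yours. The paper states this proposition without proof, citing \cite{mimbs:2013}, and there the result is obtained \emph{combinatorially} from sibling invariance: one analyzes full sibling collections directly, using the fact that the $d$ preimages of each endpoint of a leaf are spaced $1/d$ apart, so that preimages of the two endpoints alternate around $\ucirc$ and the $d$ pairwise disjoint sibling leaves are strongly constrained; gap invariance with positively oriented (weakly monotone, possibly collapsing) boundary maps falls out of that analysis, with no disk extension in sight. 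The extension $\sigma_d^\#$ is then \emph{built} gap-by-gap using this boundary behavior --- and in the present paper it is constructed (Section 2.2, \cite{Adam:2023}, \cite{Aziz:2023}) only in the hyperbolic case, whereas the proposition holds for every sibling $d$-invariant lamination. So if you fill your ``main obstacle'' by citing $\sigma_d^\#$, the argument is circular; if you try to prove the structural claim directly, you face the original problem plus additional topology, since an orientation-preserving continuous surjection of closed disks does not by itself restrict to a monotone boundary map --- you would separately need boundary-to-boundary, interior-to-interior, and local monotonicity on $\bd G$, all of which come from exactly the combinatorics being avoided.

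The salvageable parts of your write-up are the reductions at the edges, which are correct and match how the result is used: cyclic order need only be checked on triples; for an infinite gap it suffices to verify it on the dense set of endpoints of boundary leaves and pass to all of $\bd G\cap\ucirc$ by continuity, weak cyclic order being a closed condition; and in the critical case collapsing must be read as equalities rather than reversals, which is the weak sense in which the statement is meant. What is missing is the single substantive idea: a mechanism that rules out the negatively oriented circuit for a finite gap. That mechanism, in \cite{mimbs:2013}, is the sibling condition applied along the chain of boundary leaves (consecutive sides share endpoints, image leaves cannot cross, and the alternation of preimages around $\ucirc$ forbids a reversed matching), and until your proof contains an argument at that level, it is a plan rather than a proof.
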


%\begin{prop}[How Fatou Gaps Meet] \label{prop: gap meet} ***in MAC to SCM paper
%Fatou gaps in a lamination can meet in two ways. Either they meet at a leaf shared in both of their boundaries, or a leaf in one gap meets another leaf in the other gap at a single point. 
%\end{prop}

\begin{defn}[Critical Portrait]\label{def: Critical portriat}
A maximal collection of critical chords for $\sigma_d$ is called a {\em critical portrait} (maximal here meaning at least $d-1$ non-crossing critical chords). When a collection of critical chords meet at endpoints, the number may exceed $d-1$ by forming all critical polygons.
\end{defn}

\begin{defn}[Critical Sectors]

A critical sector is a region $C$ in the closed unit disk that is bounded by critical chords and arcs of the circle such that the boundary of $C$ maps onto the circle with degree 1.
%Let $C$ be a critical portrait in $\disk$. A {\em critical sector} is the closure of a component of $\cl{\disk} \setminus \cup C$ that contains an arc of the circle in its boundary.  
\end{defn}

\begin{defn}[Forward Invariant Set]
A {\em forward invariant set} consists of periodic or pre-periodic leaves or polygons that map forward preserving circular order without intersecting.
\end{defn}

\begin{defn}[Canonical Critical Portrait] \label{can crit portrait}
    Given a forward invariant set, a maximal collection of critical chords is called {\em canonical} provided all critical polygons/chords touch some vertex of the rotational polygon.
\end{defn}

\begin{rem}
    Canonical critical portraits force some sides of the rotational polygon to be major leaves; see Definition \ref{majmin}.
\end{rem}

\begin{defn}[Compatible]
A  critical portrait, $C$, is {\em compatible} with a forward invariant set, $F$, provided that if $\cup C$ meets $F$ it is only at endpoints of leaves or vertices of polygons. 
\end{defn}

\begin{defn}[Branches of the Inverse]\label{def: branches of the inverse}
Let $C$ be a critical portrait. Every critical sector $S$ in $\cl\disk$ defined by $C$ will have a function $\tau$ defined on $\partial \disk$ onto $\partial S$ that is one to one, order-preserving, and $\sigma_d \circ \tau$ is the identity on $\partial \disk$. 
\end{defn}

Combining the previous definitions we can now define a pullback scheme.

\begin{defn}[Pullback Scheme]
Let $C$ be a critical portrait and $F$ a compatible forward invariant set. The corresponding collection $PB(F,C)$ of the branches of the inverse determined by $C$ as in Definition \ref{def: branches of the inverse} gives us a {\em pullback scheme} for $F$.
\end{defn}

\begin{rem}
There can be multiple critical portraits compatible with $F$ that define the pullback scheme differently. There are some critical portraits that are compatible with $F$, but the related pullback lamination does not satisfy condition (4) of Definition \ref{def: Sibling invariant lam}. The following theorem is well known.
\end{rem}

\begin{thm} \cite{Burdette:2022}
Let $F$ be a periodic forward invariant set under $\sigma_d$, $C$ a compatible critical portrait, and the pullback scheme $PB(F,C) = \{\tau_1, \tau_2, \dots, \tau_d\}$. Let $F_0 = F,$ and $F_1 = F_0 \cup \tau_1(F_0) \cup \tau_2(F_0) \cup \dots \cup \tau_d(F_0)$. In general, for any given stage $n$ of the pull back $F_n = F_{n-1} \cup \tau_1(F_{n-1}) \cup \dots \cup \tau_d(F_{n-1})$. Let $F_\infty  = \bigcup^\infty_{n=0} F_n$, and let $ \mathcal{L} = \cl{F_\infty}$. Then, $\mathcal{L}$ is a sibling $d$-invariant lamination.
\end{thm}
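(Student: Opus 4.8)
\section*{Proof proposal}

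The plan is to verify in turn the defining conditions of Definition \ref{def: Sibling invariant lam} for $\mathcal{L} = \cl{F_\infty}$, after first checking that $\mathcal{L}$ is a lamination at all. Three structural facts organize the argument. First, a critical portrait (Definition \ref{def: Critical portriat}) cuts $\Disk$ into exactly $d$ critical sectors, so $PB(F,C)$ consists of exactly $d$ branches $\tau_1,\dots,\tau_d$, each satisfying $\sigma_d\circ\tau_i=\mathrm{id}$ on $\ucirc$ by Definition \ref{def: branches of the inverse}; thus $\tau_i$ carries a leaf $\cl{a'b'}$ to the chord $\cl{\tau_i(a')\tau_i(b')}$ lying in the closed sector $\cl{S_i}$. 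Second, the construction is nested, $F_{n-1}\subseteq F_n$, so $F_\infty=\bigcup_n F_n$ is increasing and $\mathcal{L}$ is its closure in the space of chords under the Hausdorff metric. Third, I would record the key geometric observation that every leaf of $\mathcal{L}$ lies in a single closed sector: by compatibility, $\cup C$ meets $F$ only at endpoints, so no leaf of $F=F_0$ crosses a critical chord and each such leaf lies in one $\cl{S_j}$; every branch-image lies in one $\cl{S_i}$ by construction; and since there are finitely many closed sectors, this property passes to Hausdorff limits.

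Next I would dispatch forward invariance, which is self-contained. A short induction gives $\sigma_d(F_n)\subseteq F_n$: the base case is the forward invariance of $F$, and for the inductive step $\sigma_d(F_n)=\sigma_d(F_{n-1})\cup\bigcup_i\sigma_d(\tau_i(F_{n-1}))=\sigma_d(F_{n-1})\cup F_{n-1}\subseteq F_{n-1}$, using $\sigma_d\circ\tau_i=\mathrm{id}$ and the inductive hypothesis. Hence $\sigma_d(F_\infty)\subseteq F_\infty$, and since the leaf map is continuous, $\sigma_d(\mathcal{L})=\sigma_d(\cl{F_\infty})\subseteq\cl{\sigma_d(F_\infty)}\subseteq\cl{F_\infty}=\mathcal{L}$, giving condition (1).

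I would then prove, by induction on $n$, that each $F_n$ is a pre-lamination; this is where I expect the real work to lie. Two branch-images $\tau_i(\ell),\tau_i(m)$ from a common branch do not cross because $\tau_i$ is injective and order-preserving; two images $\tau_i(\ell),\tau_j(m)$ from distinct branches do not cross because $\cl{S_i}$ and $\cl{S_j}$ meet only along a critical chord whose endpoints lie on $\ucirc$. The delicate case is a new leaf $\tau_i(\ell)$ meeting an old leaf $m\in F_{n-1}$. Here the single-sector observation is decisive: if $m\subseteq\cl{S_k}$ with $k\neq i$ the two leaves lie in sectors with disjoint interiors and cannot cross, while if $m\subseteq\cl{S_i}$ I would apply the homeomorphism $\sigma_d|_{\cl{S_i}}=\tau_i^{-1}$, which converts a crossing of $\tau_i(\ell)$ with $m$ into a crossing of $\ell=\sigma_d(\tau_i(\ell))$ with $\sigma_d(m)$; but $\ell\in F_{n-1}$ and $\sigma_d(m)\in F_{n-1}$ by forward invariance, so the inductive hypothesis forbids it. Once each $F_n$ is non-crossing, the closure $\mathcal{L}$ is a genuine lamination: a Hausdorff limit of pairwise non-crossing chords is non-crossing, so condition (1) of the Lamination definition survives the limit, and $\mathcal{L}^*=\ucirc\cup(\cup\mathcal{L})$ is closed because $\mathcal{L}$ is a closed set of chords whose leaves meet only at circle points.

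Finally I would treat backward invariance (condition 2) and sibling invariance (condition 3) together, since both are witnessed by the same $d$ branches. For a non-degenerate leaf $\ell'\in F_m$ the chords $\tau_1(\ell'),\dots,\tau_d(\ell')$ lie in $F_{m+1}\subseteq\mathcal{L}$, are pairwise disjoint as they occupy distinct closed sectors, and each maps onto $\ell'$; this is simultaneously a preimage and a full sibling collection. Given any $\ell_1\in\mathcal{L}$ with $\sigma_d(\ell_1)=\ell'$ non-degenerate, $\ell_1$ is not critical and lies in a single $\cl{S_j}$ on which $\sigma_d$ is injective, so $\ell_1=\tau_j(\ell')$ and the collection above is exactly the required full sibling collection containing it. It then remains to extend both properties to limit leaves of $\mathcal{L}\sm F_\infty$: approximating such a leaf by leaves of $F_\infty$, choosing the associated preimage or sibling collection for each approximant, and passing to a convergent subsequence by compactness of the chord space together with continuity of $\sigma_d$ yields the desired preimage, respectively full sibling collection, for the limit. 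The one point requiring care at this last step is that limiting siblings may collide at a circle endpoint; I would handle this within the standard sibling-invariant framework so that condition (3) is preserved. Assembling these verifications shows that $\mathcal{L}$ is a sibling $d$-invariant lamination.
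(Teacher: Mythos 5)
You should first know that this paper contains no proof of this theorem to compare against: it is imported from \cite{Burdette:2022} with the remark that it is ``well known.'' So your proposal stands or falls on its own. Its architecture --- prove by induction that each $F_n$ is a forward-invariant pre-lamination, verify the three conditions of Definition \ref{def: Sibling invariant lam} on $F_\infty$, then pass to the closure --- is the standard route, and several pieces are correct as written: the forward-invariance induction using $\sigma_d\circ\tau_i=\mathrm{id}$, the observation that every leaf lies in a single closed critical sector, and the reduction of same-branch and distinct-sector crossings.

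The genuine gap is in your ``delicate case.'' Pushing a putative crossing of $\tau_i(\ell)$ with $m\subseteq\cl{S_i}$ forward by $\sigma_d$ converts interleaved endpoints into interleaved endpoints only where $\sigma_d$ is injective on $\partial S_i\cap\ucirc$, and it is not injective there: the two endpoints $x,y$ of a critical chord in $\partial S_i$ both map to the critical value $v$. Concretely, a new leaf $\tau_i(\ell)$ attached at $x$ (if the branch convention sets $\tau_i(v)=x$) and an old leaf $m$ attached at $y$ can have strictly interleaved endpoints on $\ucirc$ --- a genuine crossing inside $\cl{S_i}$ --- while their images $\ell$ and $\sigma_d(m)$ merely share the endpoint $v$, which is perfectly consistent with $F_{n-1}$ being a pre-lamination. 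So the inductive hypothesis yields no contradiction in exactly this configuration. Nor is the configuration vacuous: compatibility expressly permits leaves of $F$ to meet $\cup C$ at endpoints, and canonical critical portraits (Definition \ref{can crit portrait}) are chosen so that critical chords \emph{do} touch vertices of $F$. Closing this hole requires an additional argument fixing the branch value $\tau_i(v)\in\{x,y\}$ consistently with which leaves of $F_\infty$ hang at $x$ versus $y$, together with a circular-order argument around $v$; your sketch contains no such mechanism.

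The same identification bites your final section twice more. The claim that $\tau_1(\ell'),\dots,\tau_d(\ell')$ are ``pairwise disjoint as they occupy distinct closed sectors'' is too quick: distinct closed sectors share the critical-chord endpoints, so when an endpoint of $\ell'$ is a critical value, two branch images can collide at such a point unless the branch conventions at critical values are coordinated --- and Definition \ref{def: Sibling invariant lam} demands $\ell_i\cap\ell_j=\0$, so this must actually be checked, not asserted. And deferring condition (3) for limit leaves to ``the standard sibling-invariant framework'' outsources what is arguably the hardest step of the whole theorem: that full sibling collections survive Hausdorff limits, including re-choosing a disjoint collection when limiting siblings collide, is a substantive theorem of \cite{mimbs:2013}, not a routine compactness-and-continuity argument. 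A complete proof would either reproduce that re-matching argument or cite it explicitly as the load-bearing ingredient.
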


\begin{defn}[Grand Orbit] \label{def: grand orbit}
Let $P$ be an arbitrary subset of a lamination (for example a point, leaf, polygon, gap, etc.) in a $d$-invariant lamination. The {\em grand orbit} $\mathcal{GO}(P)$ is defined to be 
$$\mathcal{GO}(P) = \{ \sigma_d^{-n}(\sigma_d^k(P)) | n,k \in \mathbb{Z}^+ \cup {0} \}.$$
\end{defn}

\begin{defn}[Major and Minor Leaves]\label{majmin}
For the orbit of a periodic leaf in a unicritical lamination, the leaf, $M$, closest to critical length is called the {\em major}, and the image $\sigma_d(M)$ of the major is called the {\em minor} usually denoted $m$.
\end{defn}

%\begin{defn}[Maximal Critical Sector] \label{def: maxsector}
%Let $C$ be a critical portrait. If a critical sector of $C$ has in its boundary all critical leaves and a side of each all critical polygon, then we call it a {\em maximal critical sector}.
%\end{defn}

%\begin{defn}[Single Critical Moment (SCM)]\label{def:SCM}
%A periodic return polygon, $P$, for $\sigma_d$ is {\em single critical moment (SCM)} if it has the following properties: 
%\begin{enumerate}
 %   \item At one moment in its orbit $P$ must be inside a maximal critical sector.
  %  \item No side of $P$ makes a closer approach to criticality in a different sector.
%\end{enumerate}
%\end{defn}

%\begin{defn}[Canonical SCM Lamination] \label{def: SCM lam}
%Let $P$ be an SCM polygon and $\mathcal{C}$ be a collection of $d-1$ guiding critical chords each touching one endpoint of each of the $d-1$ longest sides of $P$. These longest sides of $P$ are required to be adjacent to each other. Pullback $P$ with respect to $\mathcal{C}$ as described in Section \ref{sec: pullback}. We call this the {\em canonical SCM lamination} for $P$.
%\end{defn}

\begin{defn}[Hyperbolic Lamination]
A $d$-invariant lamination is said to be \emph{hyperbolic} if and only if all compatible critical chords are interior to periodic or pre-periodic Fatou gaps. 
\end{defn}
\begin{defn}[Sub-lamination]
    A \emph{sub-lamination} is a $d$-invariant lamination that is a subset of another $d$-invariant lamination.
\end{defn}

 \begin{defn}[Rotational Set]
Consider $\sigma_d$: $\mathbb{S} \rightarrow \mathbb{S}$ for a particular $d \geq 2.$

Let $P = \{ x_i | 0 < x_1 < x_2 < x_3 < ... < x_k < 1 \}$ be a finite set in consecutive order in $\mathbb{S}$. We say that $P$ is a {\em rotational set} (for $\sigma_n$) iff \begin{enumerate}
    \item $\sigma_n(P)=P$, and
    \item For $1 \leq j \leq k$, if $\sigma_d(x_j) = x_i$, set $i(j)=i$. Then for all $j$, $j-i(j) (\text{mod }k)$ is the same. 
\end{enumerate}
If (2) holds (but possibly not (1)), we say $\sigma_d$ is circular order-preserving on $P$. We call a rotational set which is a single periodic orbit, a {\em rotational orbit}. 
\end{defn} 

\begin{rem}
    A rotational set is a basic example of a forward invariant set.
\end{rem}
 
 \begin{defn}[Rotation Number]
To each rotational set we can assign a {\em rotation number}, a rational number $0 \leq \dfrac{p}{q} < 1$ in lowest terms.  
That is, let $\mathcal{O} = \{x_1 < x_2 < x_3...<x_q \}$ be a rotational periodic orbit. Suppose that $\sigma_d(x_1)=x_j$. Set $p=j-1$. The rotation number of $\mathcal{O}$ is $\dfrac{p}{q}$. Our notation is $\rho(\mathcal{O}) = \rho(x_1)=\rho(x_i) = \dfrac{p}{q}$. 
\end{defn} 
 
%***???Invariant sets can have an irrational rotation number, but then we wouldn't be one-to-one.
 
 \begin{defn}[Rotational Polygon] \label{rotational poly}
A polygon (or a leaf) $P=P_0$ is said to be {\em rotational} iff $\sigma_d|_{P_0}$ maps $P_0$ back to itself preserving circular order and $\forall i$, $\exists j \neq i, \sigma_d(p_i) = p_j$ according to the same nonzero rotation number. 

\end{defn}

\begin{rem}
    The vertices of a rotational polygon satisfy the conditions of a rotational set.
\end{rem}

\subsection{Flower-Like Sets}

The following definitions are from \cite{Blokh:2021} and give additional motivation to studying the correspondence between locally unicritical and locally maximally critical rotational polygons. 

\begin{defn}[Lap]
    A lap of $\lam$ is either a finite gap of $\lam$ or a non-degenerate leaf of $\lam$ not on the boundary of a finite gap.
\end{defn}

\begin{defn}[Flower-Like Sets]
    Suppose that $\lam$ has an infinite invariant gap $U$ or an invariant lap $G$ and an infinite gap $U$ that shares an edge with $G$ (in the latter case it follows that $U$ is periodic). Then {U} (in the former case) or the set consisting of $G$ and all periodic Fatou gaps attached to it (in the latter case) is said to be a {\em flower-like set}.  
\end{defn}

\begin{figure}[t]
    \centering
    \includegraphics[width=2.5in]{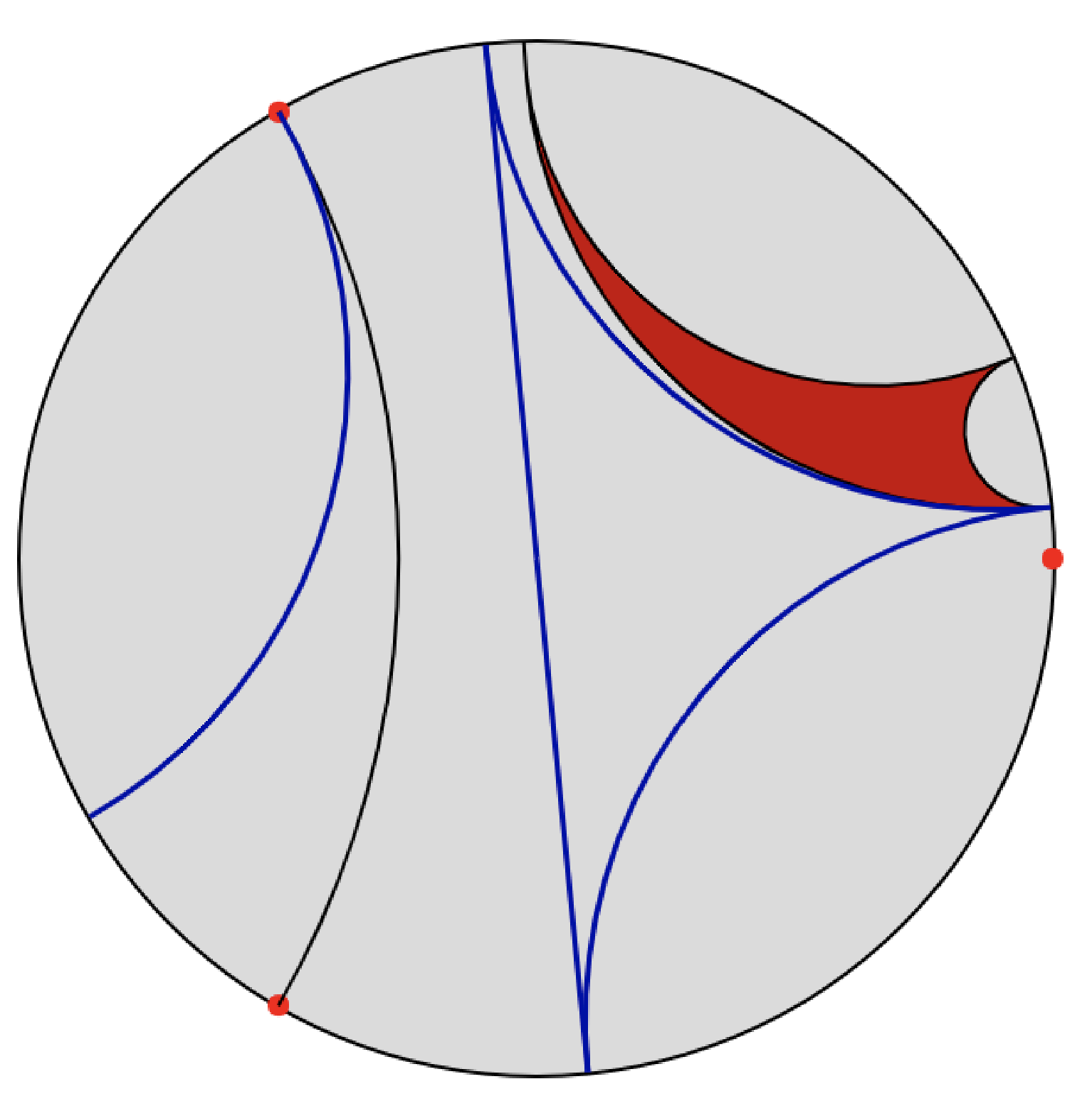}
    \includegraphics[width=2.5in]{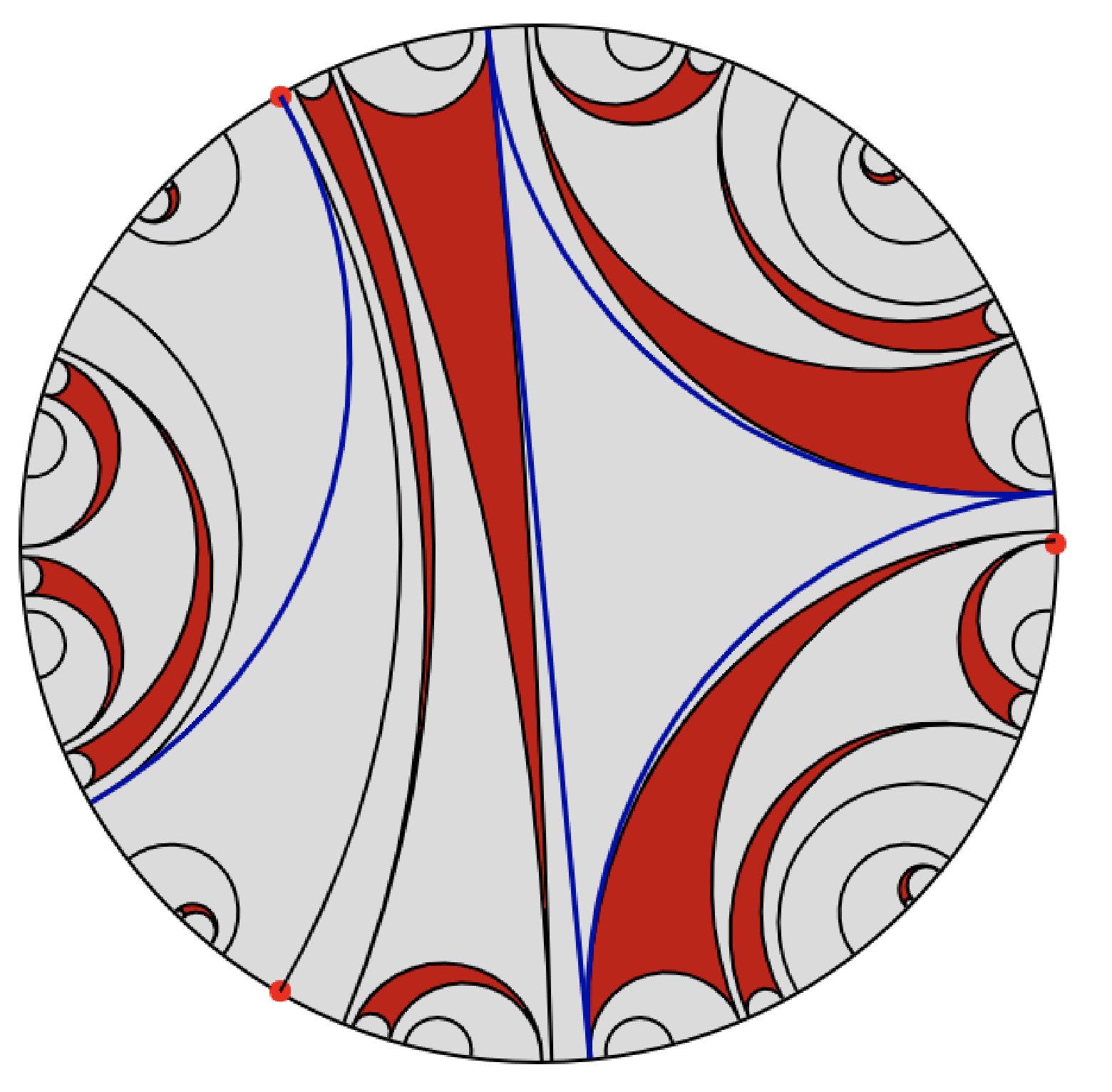}
    
    \includegraphics[width=4in]{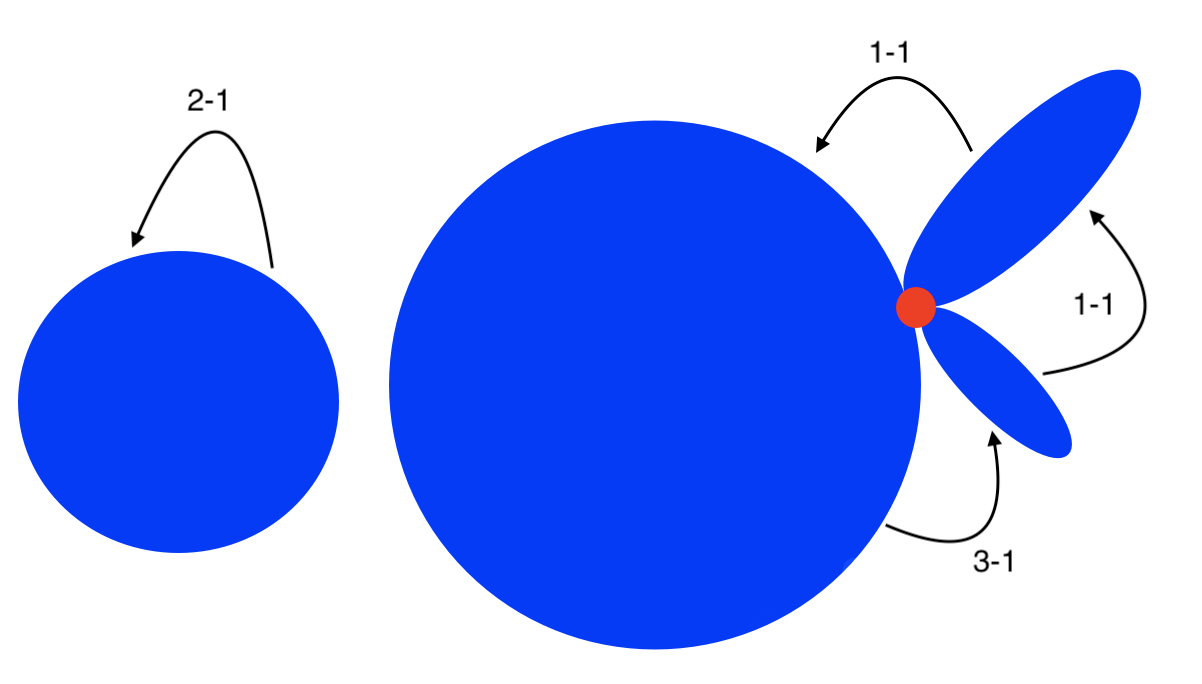}
   \caption[Fixed Rotational Polygon]{On the top left, we have a rotational polygon where only a portion of the criticality is associated with the polygon. On the top right, we have the first two pullbacks of the polygon. We can see the Fatou domains beginning to form. We also notice that the pre-images of the polygon limit to the fixed leaf. Thus, the forward orbit of the two critical Fatou domains will not meet. On the bottom, we have a sketch of the Fatou domains that are associated with criticality and how they map forward. We can see the ``flower-like" orbit  on the bottom right. The red point is the rotational polygon that was collapsed to a point in the quotient space.}
   \label{loc uni full diagram}
\end{figure}

Thus, a flower-like set is a certain set of gaps or laps.  An example which pertains to our desired correspondence is that of a rotational polygon in a fixed sector of a lamination as they will have an associated invariant gap. Fixed objects and sectors are defined in the following section. In Figure \ref{loc uni full diagram}, we show a lamination with a rotational polygon and a fixed leaf in the lamination. This fixed leaf separates the unit disk into two fixed sectors which forces our polygon to not be associated with the full degree of criticality.

\section{Fixed Point Portraits for Laminations}\label{section:fixed}

Fixed point portraits in the polynomial setting were studied by Goldberg in \cite{Goldberg1} and by Goldberg and Milnor in \cite{Goldberg2}. Our aim in starting with laminations is to provide more detail about the association of fixed points with zero rotation and fixed points with non-zero rotation, including counting all the possibilities. In this paper we restrict ourselves to the hyperbolic case.

\subsection{Fixed Point Portraits}
\begin{defn}[Fixed Point]We say a point $x$ is {\em fixed} by a map $f$, if and only if, $f(x)=x$.
\end{defn}

\begin{prop}\label{fxd}For the map $\sigma_d$ on the circle, the fixed points are $\frac{i}{d-1}$, where $0\le i<d-1$, for $i\in \mathbb{Z^{+}}$.
\end{prop}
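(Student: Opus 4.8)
The plan is to unwind the definition of a fixed point directly into a congruence on the circle and then solve it. First I would recall that a point $t\in\ucirc$ is identified with a representative in $[0,1)$ and that $\sigma_d(t)=dt\pmod 1$. The fixed point condition $\sigma_d(t)=t$ therefore reads $dt\equiv t\pmod 1$, which rearranges to $(d-1)t\equiv 0\pmod 1$; equivalently, $(d-1)t\in\mathbb{Z}$.

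Next I would solve this relation. Writing $(d-1)t=k$ for an integer $k$ gives $t=\frac{k}{d-1}$. Imposing the normalization $0\le t<1$ forces $0\le k<d-1$, so $k$ ranges over the integers $0,1,\dots,d-2$, producing exactly the candidates $t=\frac{i}{d-1}$ with $0\le i<d-1$. A short verification in the reverse direction confirms that each such $t$ is genuinely fixed: $\sigma_d\!\left(\frac{i}{d-1}\right)=\frac{di}{d-1}\pmod 1=\left(i+\frac{i}{d-1}\right)\pmod 1=\frac{i}{d-1}$, since $i$ is an integer and $0\le\frac{i}{d-1}<1$. Distinctness of the $d-1$ values in $[0,1)$ is immediate, so these are all of the fixed points and there are precisely $d-1$ of them.

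The argument is elementary, so there is no serious obstacle; the only points requiring care are bookkeeping ones. I would make sure to work consistently modulo $1$ (so that $\ucirc=\mathbb{R}/\mathbb{Z}$ rather than a subset of $\mathbb{R}$), and to check both that every $\frac{i}{d-1}$ is fixed and that no other point can be, so that the enumeration is exhaustive rather than merely a list of examples. It is also worth noting that $i=0$ yields the point $0$, and that the fixed points are then equally spaced around the circle with spacing $\frac{1}{d-1}$; this geometric picture is the quickest sanity check that the count of $d-1$ is correct.
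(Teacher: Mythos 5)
Your proof is correct and complete: the paper actually states Proposition \ref{fxd} without any proof, treating it as elementary, and your argument (rewriting $\sigma_d(t)=t$ as $(d-1)t\in\mathbb{Z}$, solving within $[0,1)$, and verifying both directions) is exactly the standard computation the authors left implicit. Your care in checking exhaustiveness, not just that each $\frac{i}{d-1}$ is fixed, supplies precisely what the omitted verification requires.
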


\begin{defn}[Fixed Point Portrait (FPP)] \label{fpp} We call a partition of $\mathbb{S}$, obtained by connecting any number of fixed points with leaves in a manner that two leaves only meet at a fixed point, a {\em fixed point portrait}. We call a sector of $\mathbb{D}$ bounded by fixed leaves and arcs of $\mathbb{S}$ a {\em fixed sector} and a polygon inside $\mathbb{D}$ bounded by fixed leaves a {\em fixed polygon}.  If two fixed leaves do meet at a point, then they must be part of the boundary of a fixed polygon.
\end{defn}

\begin{rem}
    For us, a fixed point portrait  is defined to  consider only the $d-1$ fixed points that are ``externally visible"; see Figure \ref{bott}. The ``missing" fixed points are then shown to necessarily exist in the laminational setting at the end of Section~\ref{canLam} (Theorem \ref{IFP}). 
\end{rem}

Figures \ref{fig:no fixed leaves}, \ref{fig:quarter fixed leaf}, \ref{fig:no fixed leaves1}, \ref{fig:no fixed leaves2}, \ref{fig:no fixed leaves3}, and \ref{fig:no fixed leaves4} are all degree 5 fixed point portraits (up to rotation by a fixed point), a finite pullback of the lamination, and its corresponding Julia set. The fixed points on the circle in the lamination are marked with a dot. We can see the relation between laminations and Julia sets: leaves in the laminations correspond to pinch points in the Julia sets.

\begin{figure}[H]
    \centering
    \includegraphics[width = 2in]{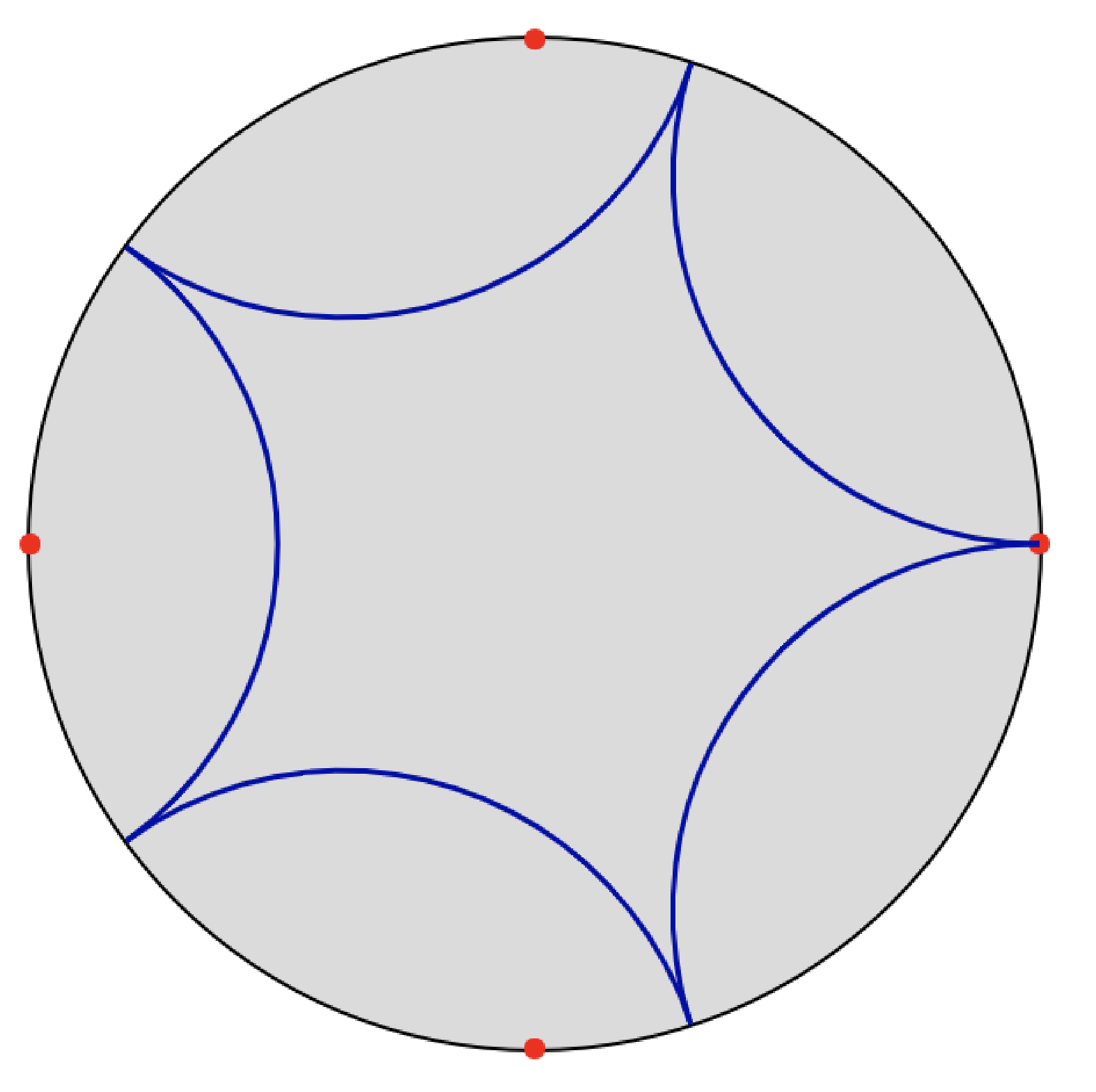}
     \hspace{.1 in}
    \includegraphics[width = 2in]{images/s5_no_fixed_leaves.png}
     \hspace{.1 in}
    \includegraphics[width = 2in]{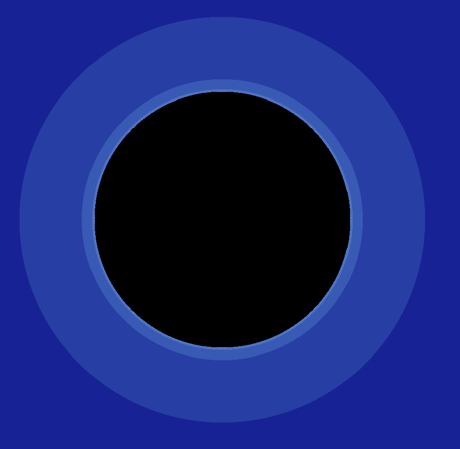}
    \caption[Simplest Fixed Point Portrait]{
   This is the simplest fixed point portrait as it has no fixed leaves only fixed points on the boundary. Here, the pullback lamination is the same as the fixed point portrait as there are no leaves to pull back. The  Julia set is just a circle.}
    \label{fig:no fixed leaves}
\end{figure}

\begin{figure}[H]
    \centering
    \includegraphics[width = 2in]{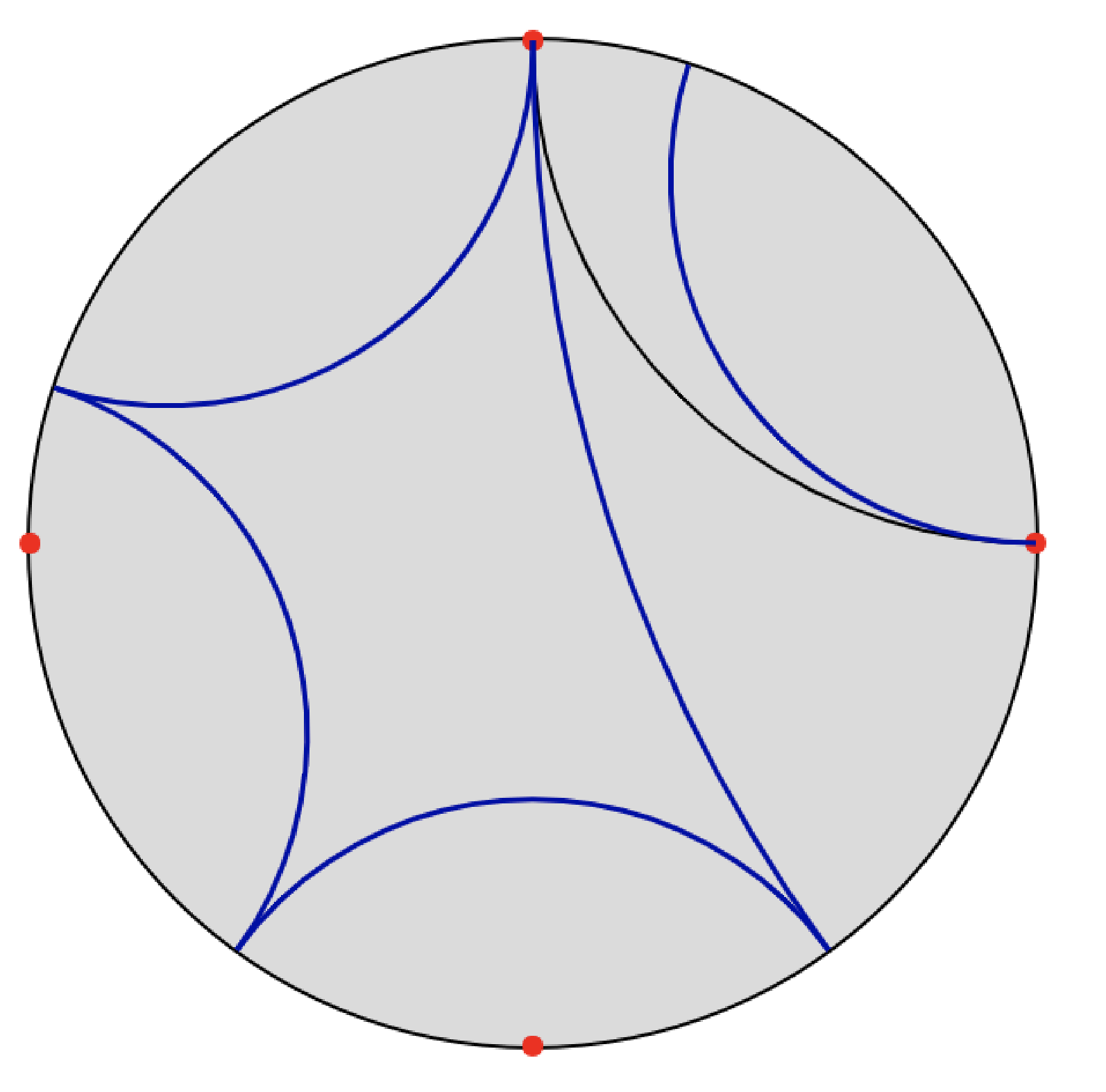}
     \hspace{.1 in}
    \includegraphics[width = 2in]{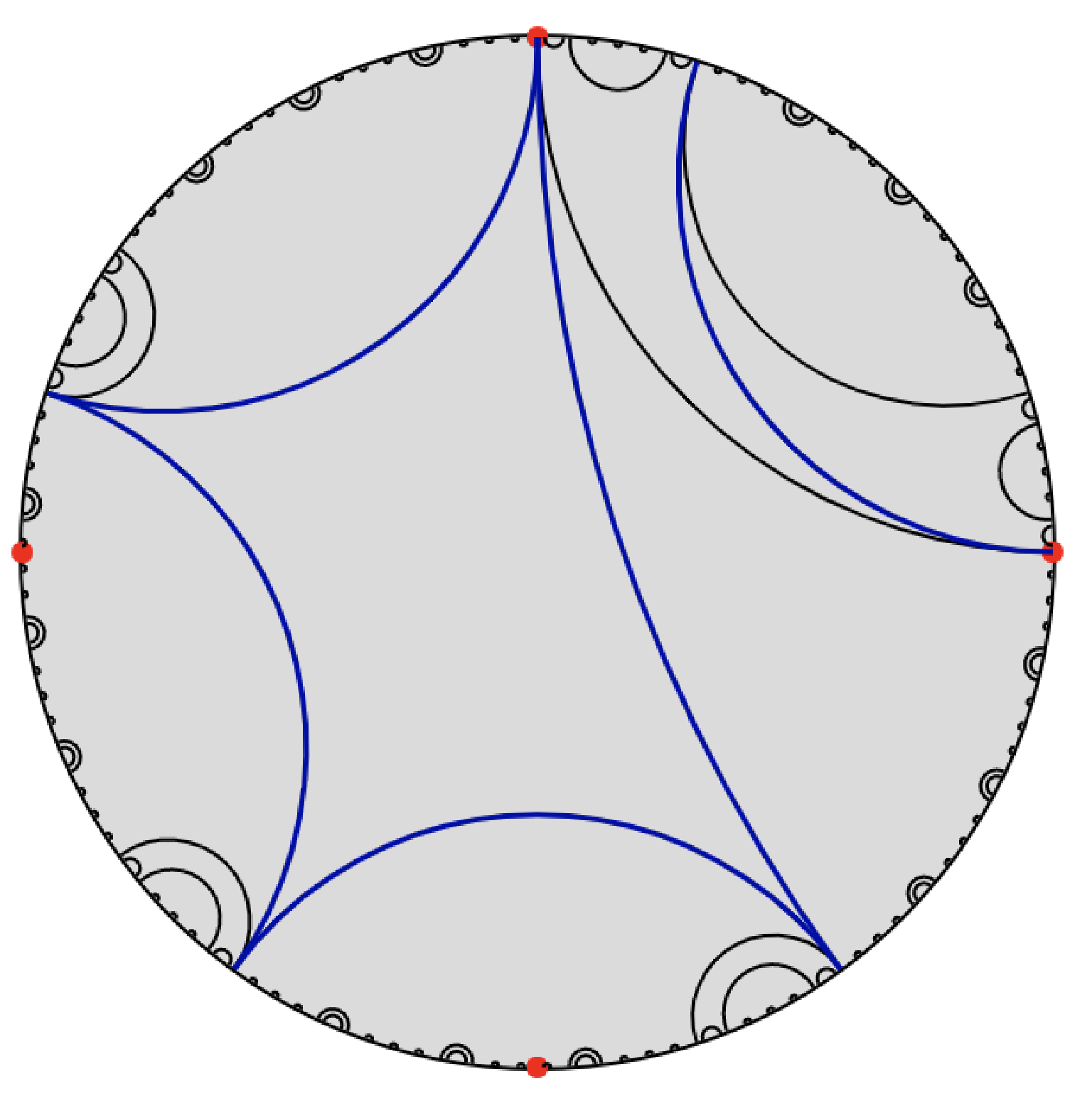}
     \hspace{.1 in}
    \includegraphics[width = 2in]{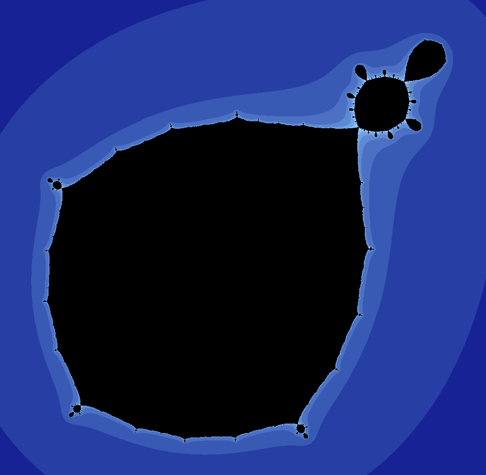}
    \caption[Simplest Fixed Point Portrait]{
   This is the fixed point portrait with a fixed leaf of length $\dfrac{1}{4}$ made by connecting \_0 and \_1 forming two fixed sectors and the associated  Julia set.}
    \label{fig:quarter fixed leaf}
\end{figure}

\begin{figure}[H]
    \centering
    \includegraphics[width = 2in]{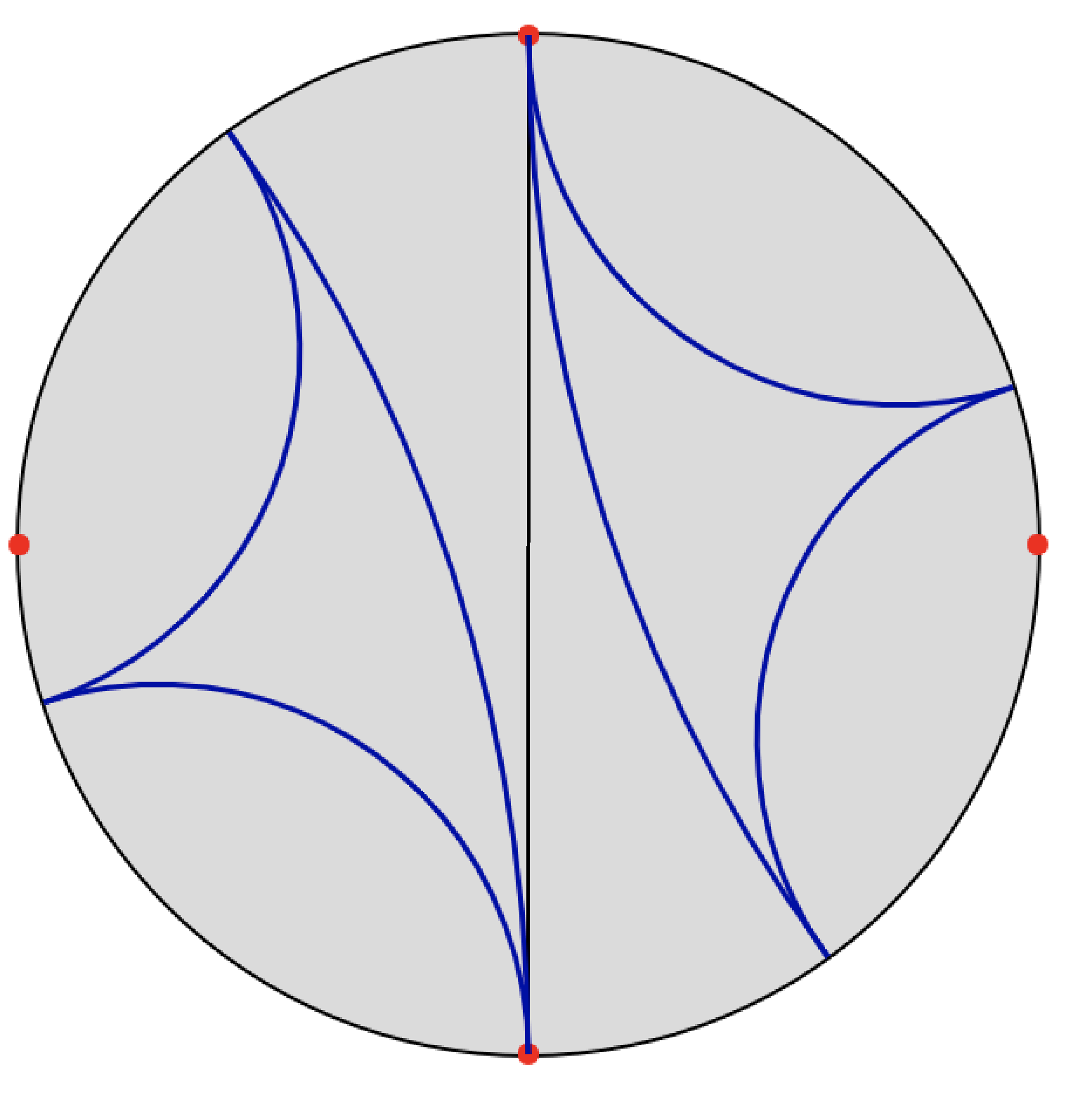}
     \hspace{.1 in}
    \includegraphics[width = 2in]{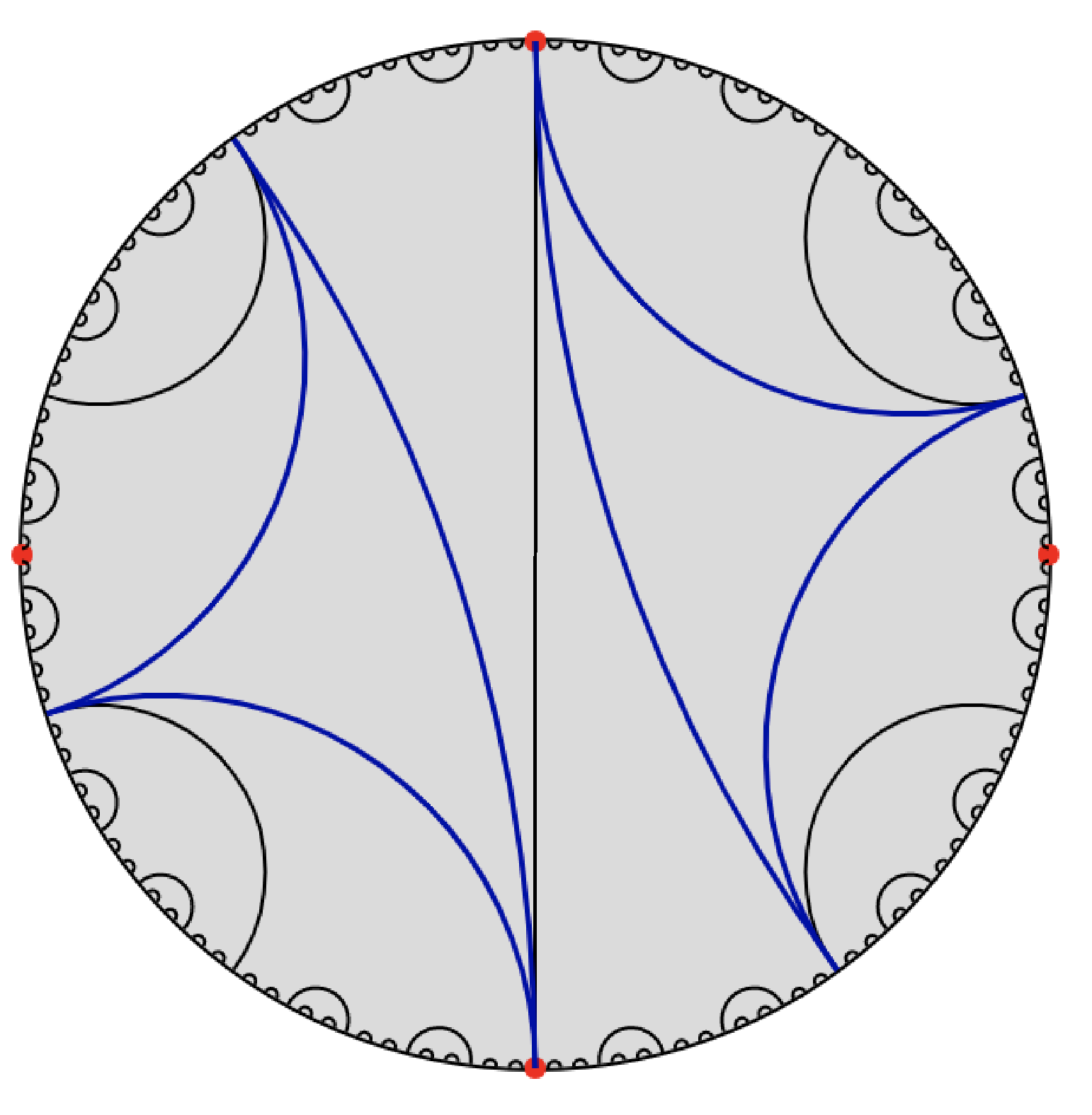}
     \hspace{.1 in}
    \includegraphics[height = 1.7in, width=2in]{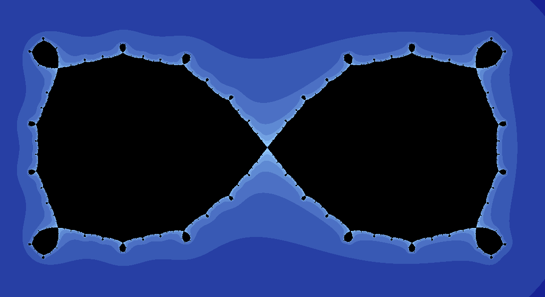}
    \caption[Simplest Fixed Point Portrait]{
   This is the fixed point portrait with a fixed diameter (length $\dfrac{1}{2}$) made by connecting \_1 and \_3 forming a rotationally symmetric FPP and the associated  Julia set with two symmetric main bulbs.}
    \label{fig:no fixed leaves1}
\end{figure}

\begin{figure}[H]
    \centering
    \includegraphics[width = 2in]{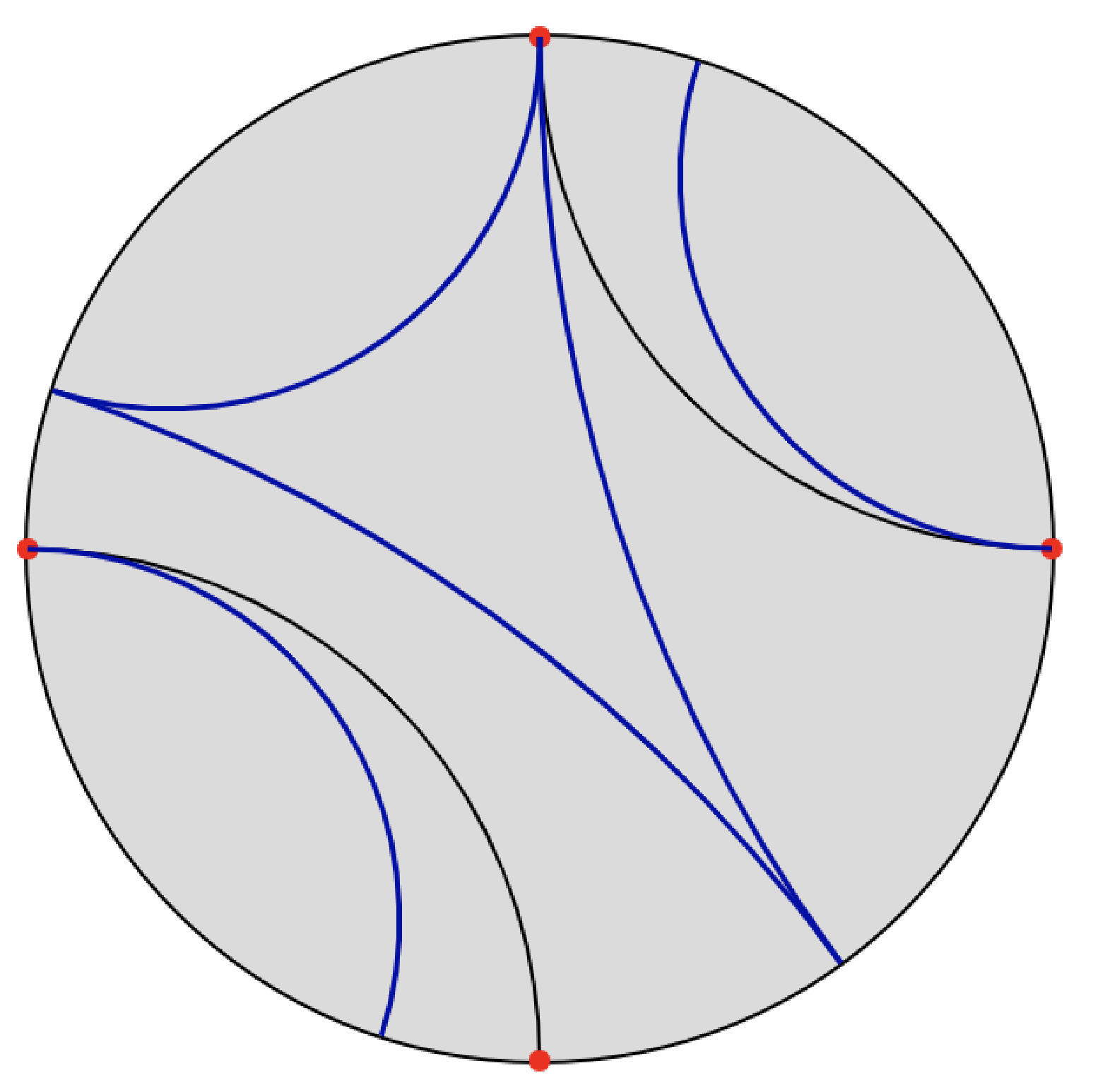}
     \hspace{.1 in}
    \includegraphics[width = 2in]{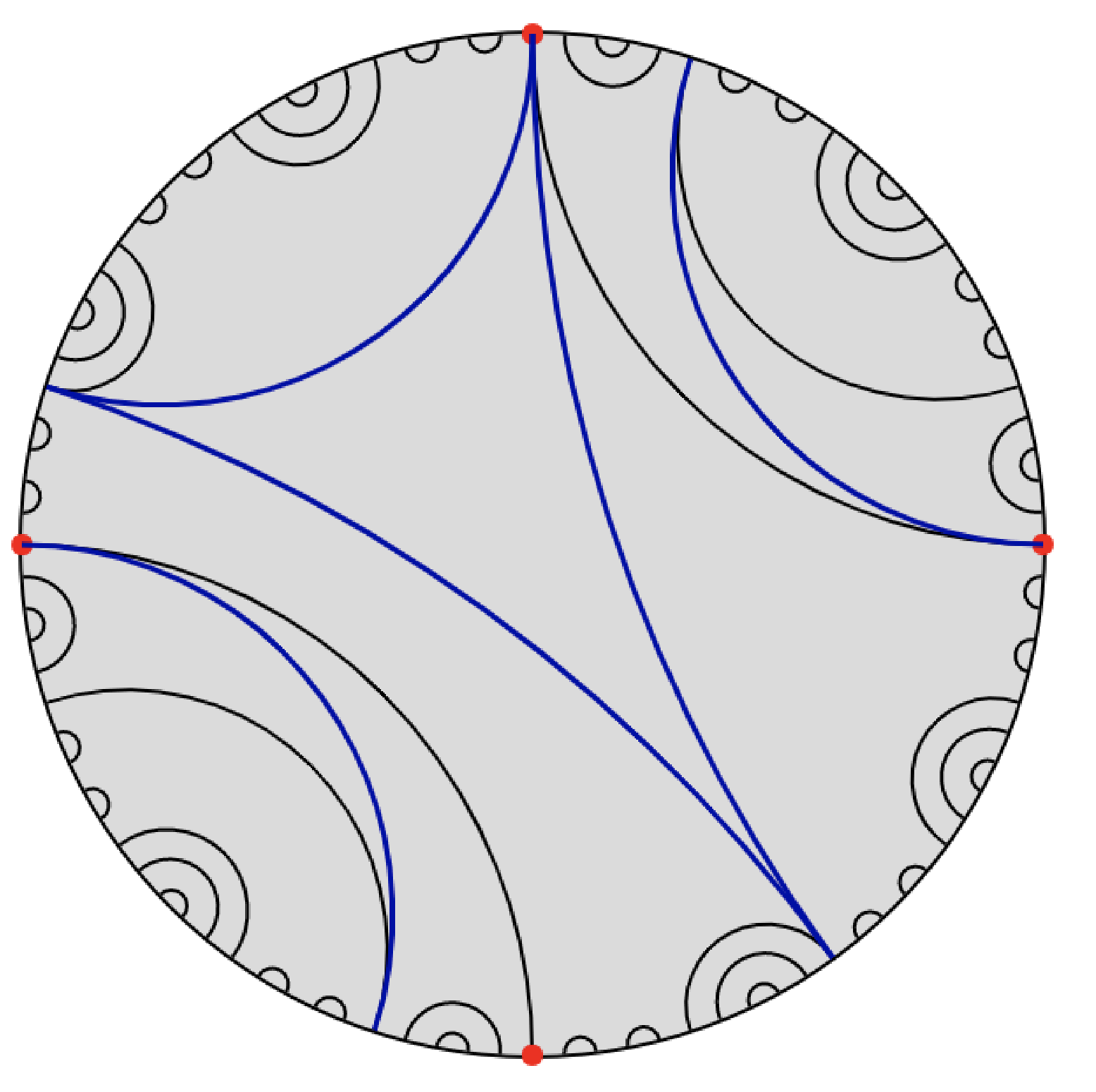}
     \hspace{.1 in}
    \includegraphics[width = 2in]{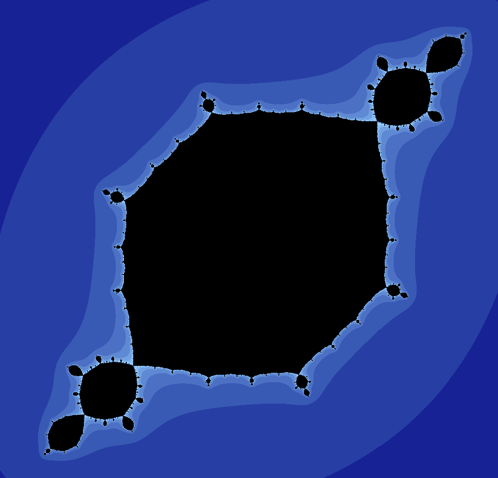}
    \caption[Simplest Fixed Point Portrait]{
   This is the fixed point portrait with two fixed leaves both of length $\dfrac{1}{4}$, one from \_0 to \_1 and another from \_2 to \_3 forming a rotationally symmetric FPP and the  associated Julia set. }
    \label{fig:no fixed leaves2}
\end{figure}

\begin{figure}[H]
    \centering
    \includegraphics[width = 2in]{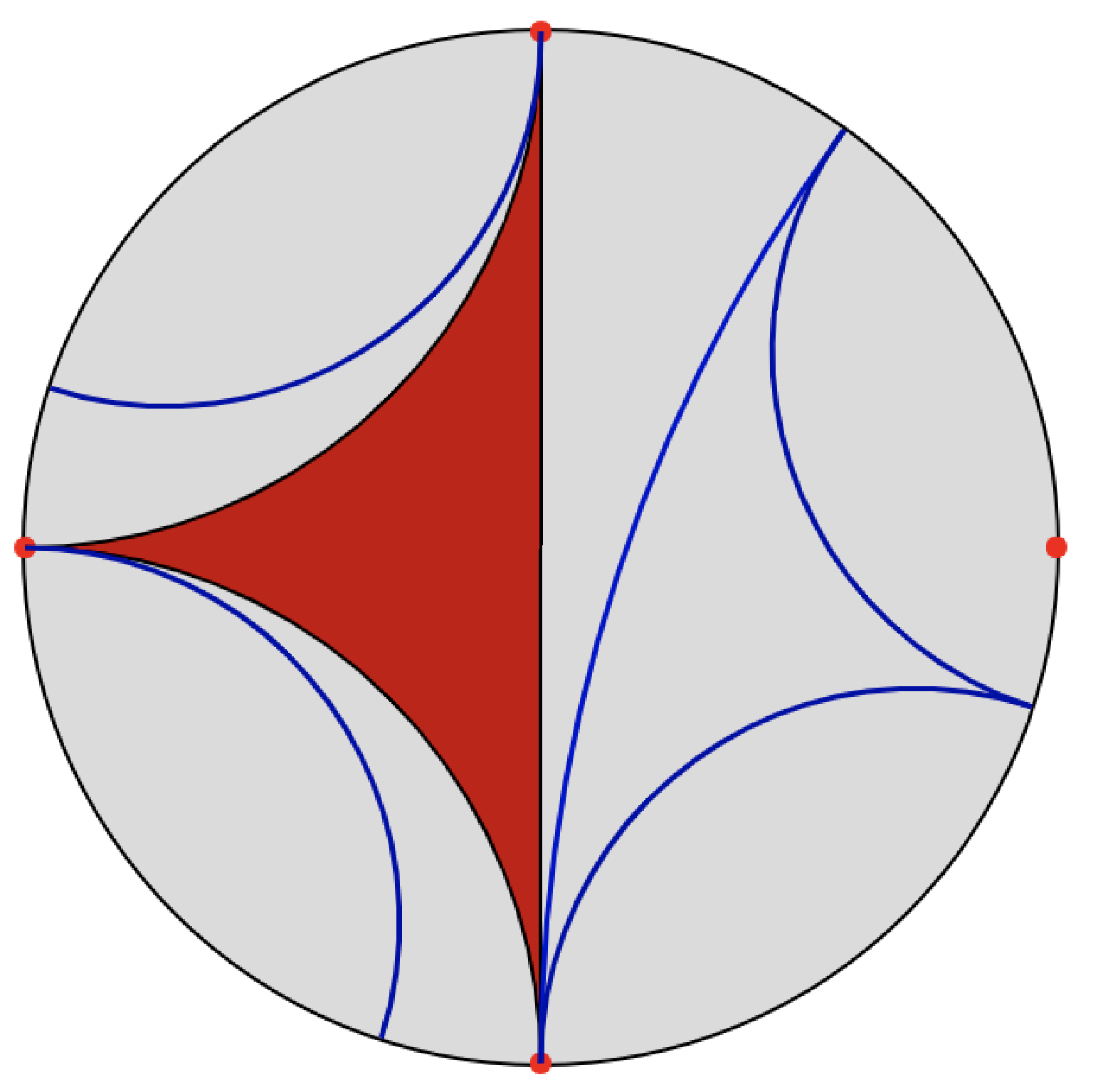}
     \hspace{.1 in}
    \includegraphics[width = 2in]{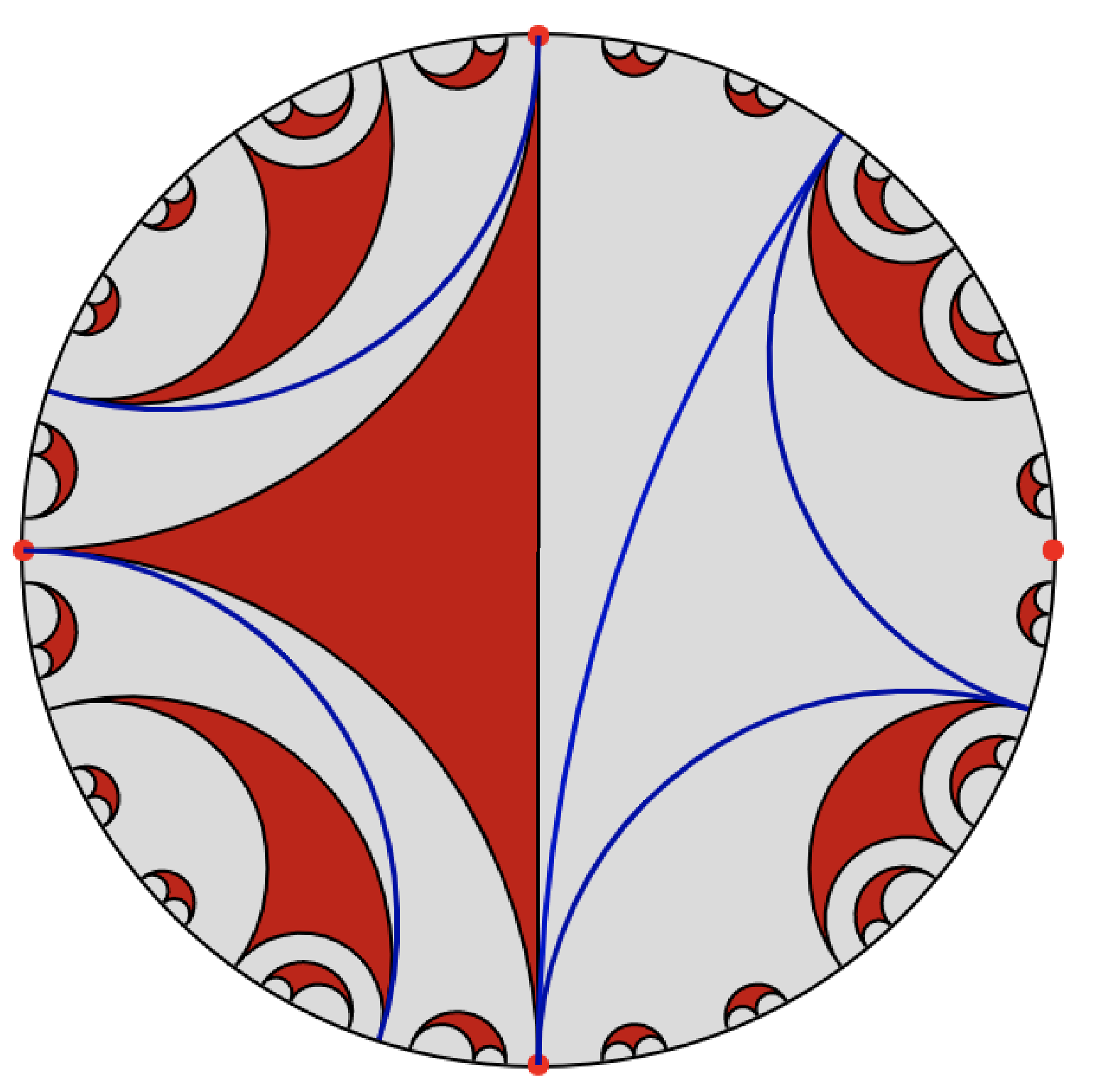}
     \hspace{.1 in}
    \includegraphics[width = 2in]{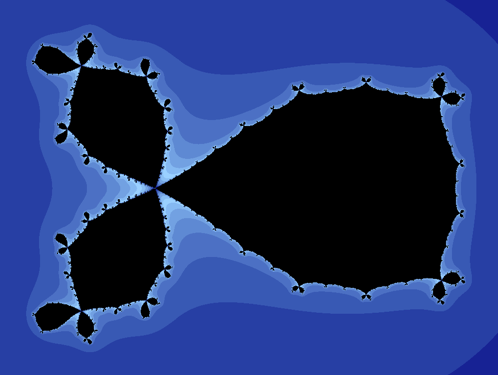}
    \caption[Simplest Fixed Point Portrait]{
   This is the fixed point portrait with a fixed triangle connecting \_1, \_2, and \_3. Here the fixed triangle has two sides of length $\dfrac{1}{4}$ and one side of length $\dfrac{1}{2}$ which has the associated Julia set having two main bulbs of the same size and one much larger main bulb. }
    \label{fig:no fixed leaves3}
\end{figure}

\begin{figure}[H]
    \centering
    \includegraphics[width = 2in]{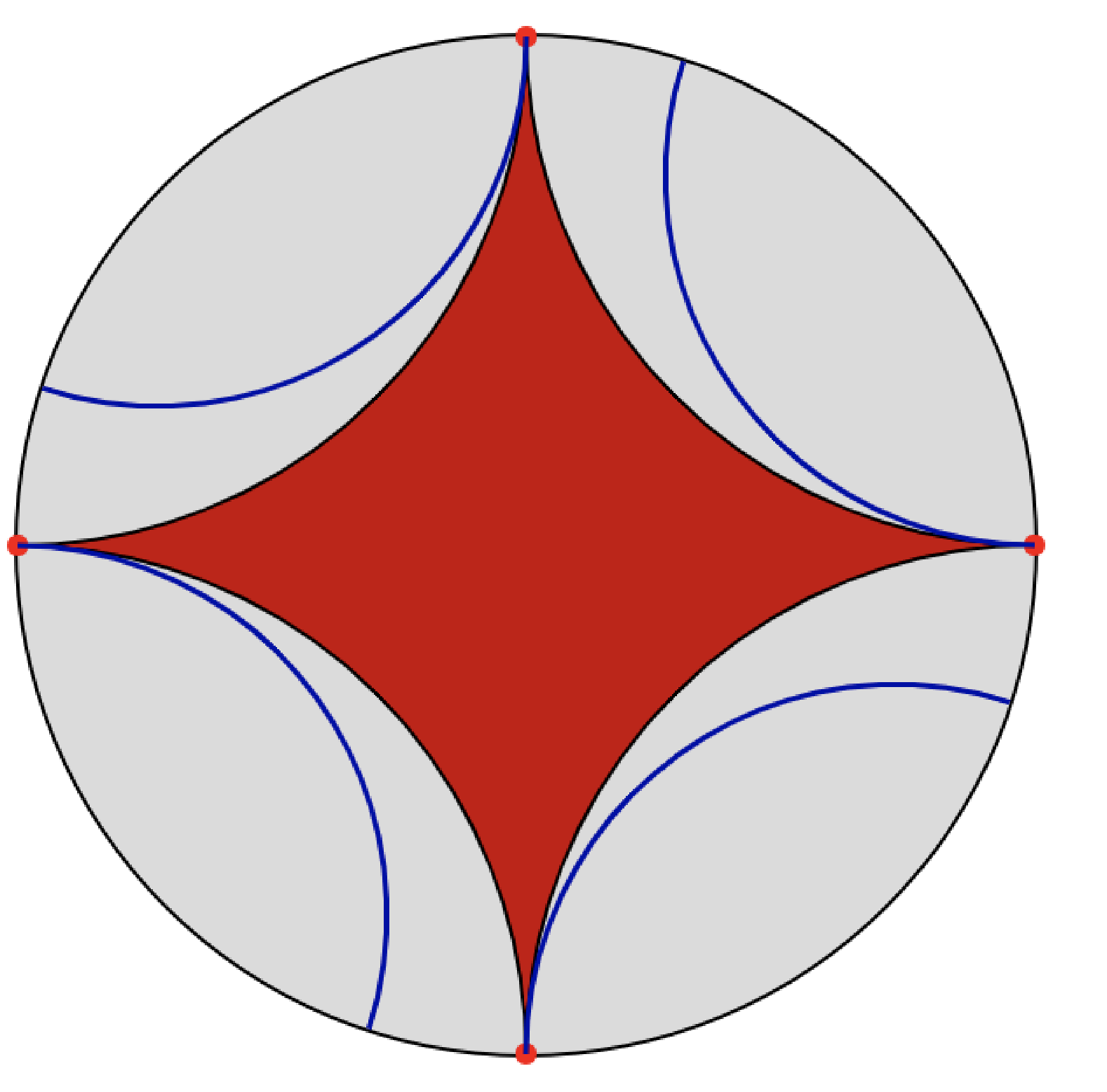}
     \hspace{.1 in}
    \includegraphics[width = 2in]{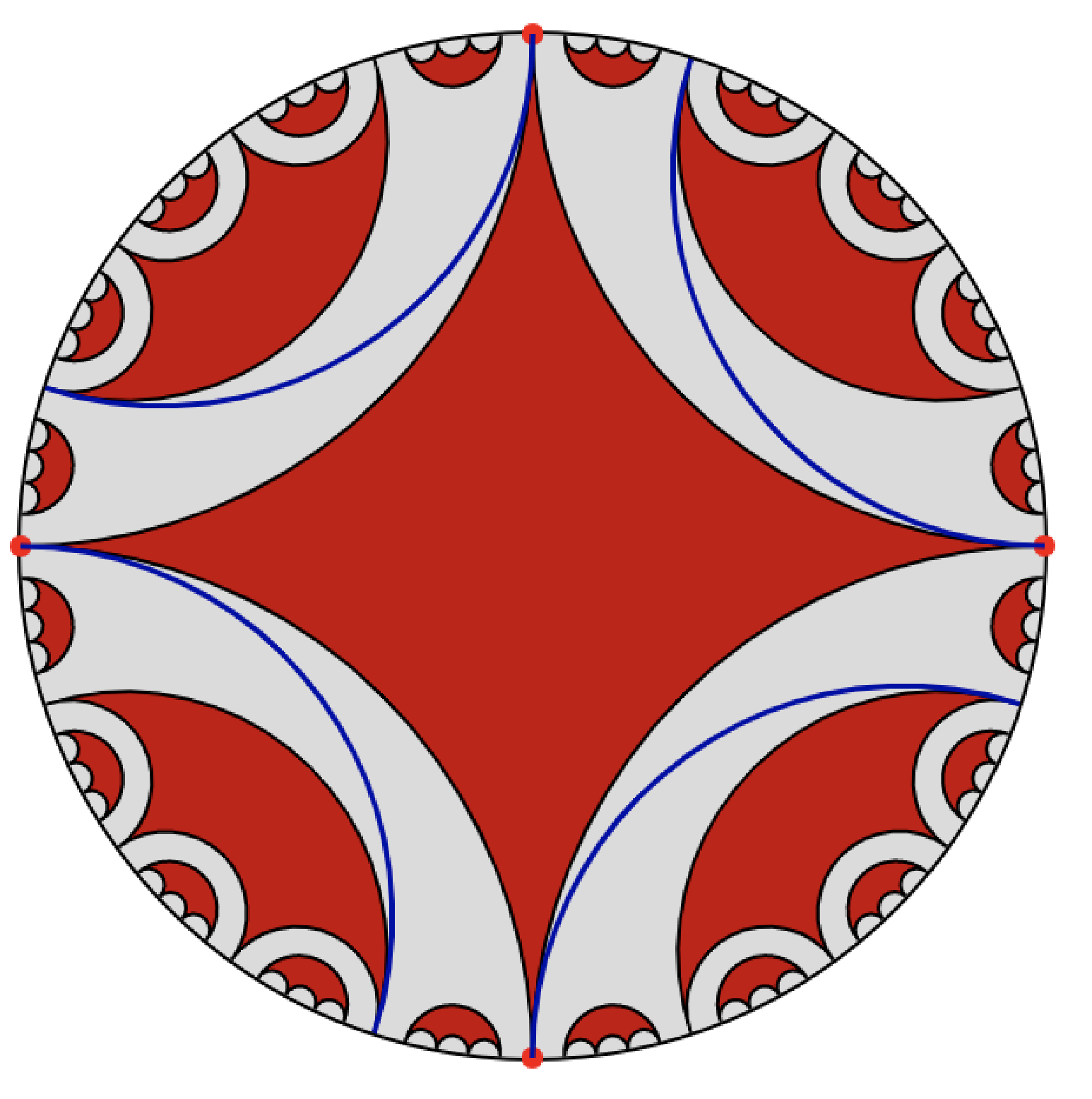}
     \hspace{.1 in}
    \includegraphics[width = 2in]{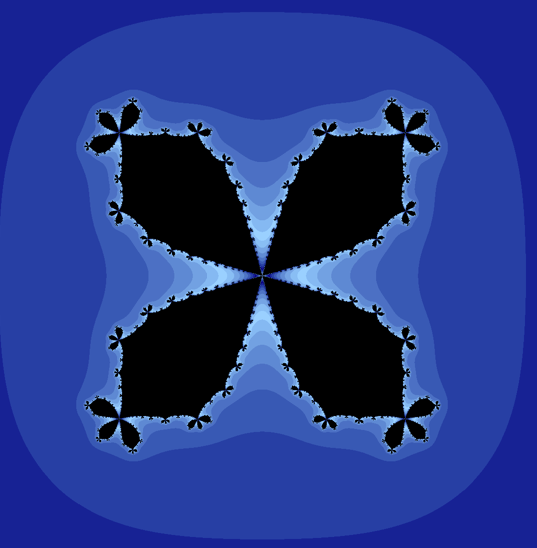}
    \caption[Simplest Fixed Point Portrait]{
   This is the maximal fixed point portrait: all fixed points are connected by leaves. The associated  Julia set has 90$^\circ$ rotational symmetry.}
    \label{fig:no fixed leaves4}
\end{figure}

\subsection{Extending $\sigma_d$ to the Disk}\label{extendsig}

In \cite{Adam:2023} and \cite{Aziz:2023}, the authors extend $\sigma_d$ on a lamination $\mathcal L$ to the closed unit disk in general, and extend to the plane as a branched covering map $\sigma_d^\#$ in the hyperbolic case in particular. Branch points (critical points) are exclusively inside Fatou gaps by the definition of hyperbolicity. The lamination induces a {\em topological Julia set} $J_{\mathcal L}$ and a branched covering map $P_{\mathcal L}$ on the plane. The first author then shows that such a branched covering map satisfies the Thurston criterion for the existence of a polynomial $P$ Thurston equivalent to  $P_{\mathcal L}$. Moreover, the lamination induced by the Julia set of $P$ is $\mathcal L$. The reader is referred to the cited dissertation and thesis for definitions and details.

\subsection{Counting Fixed Point Portraits}\label{globalcount}

Recall that we defined the map $\sigma_d$ in Definition \ref{dmap}. Also, recall that Proposition \ref{fxd} states that there are $d-1$ fixed points on the boundary of $\mathbb{S}$ for $\sigma_d$.

\begin{thm}[Counting Theorem]\label{count}The number of fixed point portraits for $\sigma_d$ are the Catalan numbers $\frac{(2n)!}{(n+1)!(n)!}$, where $n=d-1$, the number of fixed points of $\sigma_d$.
\end{thm}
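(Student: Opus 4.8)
The plan is to reduce the count to the classical enumeration of non-crossing partitions. By Proposition \ref{fxd} there are exactly $n = d-1$ fixed points $p_0, \dots, p_{n-1}$ of $\sigma_d$, arranged cyclically on $\ucirc$. First I would argue that a fixed point portrait carries the same data as a non-crossing partition of this set of $n$ points. Given a portrait, declare two fixed points equivalent when they are joined by a finite chain of fixed leaves; the resulting equivalence classes are the blocks. The hypotheses of Definition \ref{fpp} --- that two leaves meet only at a fixed point, and that leaves which do meet bound a fixed polygon --- force each block to be a singleton, the two endpoints of a lone fixed leaf, or the vertex set of a fixed polygon realized as the boundary of its convex hull (with no interior diagonals and no open polygonal paths). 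Since the leaves do not cross, the induced partition is non-crossing. Conversely, from a non-crossing partition one draws the convex-hull boundary of each block (nothing for a singleton, a leaf for a pair, a polygon for a larger block); non-crossing of the partition guarantees the leaves do not cross, so this yields a legitimate portrait, and the two constructions are mutually inverse.

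Second, I would establish the count by a first-block recursion translated into a generating function (equivalently, one may cite the classical theorem of Kreweras that non-crossing partitions of an $n$-set number $C_n$). Distinguish the fixed point $p_0$ and consider the block $B$ containing it, say with $s \ge 1$ vertices. Because the partition is non-crossing, the sides of the polygon on $B$ (or the single leaf when $s=2$, or nothing when $s=1$) cut $\Disk$ into $s$ regions, and fixed points lying in distinct regions can never be joined; hence each of the $s$ arcs determined by $B$ carries an independent fixed point portrait on its own fixed points. Writing $C(x) = \sum_{N \ge 0} a_N x^N$ for the ordinary generating function of the number $a_N$ of portraits on $N$ points (with $a_0 = 1$, the empty portrait), each of the $s$ vertices of $B$ contributes a factor $x$ and each of the $s$ gaps contributes a factor $C(x)$, so summing over $s \ge 1$ together with the empty portrait gives
\[
C(x) = 1 + \sum_{s \ge 1}\bigl(x\,C(x)\bigr)^{s} = \frac{1}{1 - x\,C(x)}.
\]
This is the defining quadratic $x\,C(x)^2 - C(x) + 1 = 0$ of the Catalan generating function, whence $a_N = C_N = \tfrac{1}{N+1}\binom{2N}{N} = \tfrac{(2N)!}{(N+1)!\,N!}$, and specializing to $N = n = d-1$ completes the proof.

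I expect the main obstacle to lie in the first step rather than the counting. One must verify rigorously that the ``leaves which meet bound a fixed polygon'' clause of Definition \ref{fpp} forces every non-singleton block to be realized by exactly its convex-hull boundary, so that portraits and non-crossing partitions are in genuine bijection; if internal diagonals were permitted the count would inflate to the dissection numbers rather than the Catalan numbers. A secondary point needing care is the independence claim underlying the recursion: I would show, using that fixed leaves cannot cross and that the block $B$ separates $\Disk$, that no leaf can join fixed points lying in two different gaps of $B$ and that a leaf within a single gap cannot cross a side of $B$, so that the contributions of the gaps genuinely multiply.
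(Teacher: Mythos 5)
Your proposal is correct, and it reaches the Catalan count by a genuinely different (though ultimately equivalent) decomposition than the paper. The paper works directly with portraits and proves the Catalan convolution $f(n+1)=\sum_{k=0}^{n}f(k)f(n-k)$ by induction: given a portrait on $\{\_0,\dots,\_n\}$, it takes $\_k$ to be the least fixed point joined to $\_n$, observes that $\{\_0,\dots,\_(k-1)\}$ carries a free portrait ($f(k)$ choices) while the arc from $\_k$ to $\_n$, with the chord $\overline{\_k\_n}$ already imposed, carries $f(n-k)$ choices, and sums over $k$. You instead pass through an explicit bijection with non-crossing partitions of the $n$ fixed points and decompose by the \emph{entire block} of a distinguished point $p_0$, arriving at the functional equation $C(x)=1/\bigl(1-x\,C(x)\bigr)$; this is the same Catalan generating function, since rearranging gives $C=1+xC^2$, whose coefficient identity is exactly the convolution the paper proves by hand. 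Your route buys two things the paper leaves implicit: first, it isolates and verifies the structural reading of Definition \ref{fpp} that every connected cluster of fixed leaves must be precisely the convex-hull boundary of its vertex set (no interior diagonals, no open chains) --- exactly the point that keeps the answer at the Catalan numbers rather than inflating to dissection counts --- and second, once the bijection is established one may simply cite Kreweras's classical enumeration of non-crossing partitions. The paper's route, by contrast, needs no generating-function machinery and no external citation. Two small remarks: the paper's phrase ``a portrait that contains $\_n$'' silently folds in the case where $\_n$ is a singleton (the $k=n$ term $f(n)f(0)$ of its sum), whereas your $s=1$ term handles the analogous case explicitly; and for a block of size $s\ge 3$ the polygon sides cut $\Disk$ into $s+1$ regions rather than $s$ (the polygon's interior is the extra one), but since that region contains no fixed points, your independence claim for the $s$ circular arcs --- justified, as you note, by the fact that a leaf joining two different arcs would cross a side of $B$ --- stands unaffected.
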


\begin{proof} Let $d\ge 2$. By Proposition \ref{fxd}, the map $\sigma_d$ has $d-1$ fixed points $\{\_0,\_1,\dots,\_(n-1)\}$. We will show the number of fixed point portraits satisfies the recurrence relation of Catalan numbers.

The Catalan numbers are defined by the recurrence
$$C_0=1$$ and $$C_{n+1}=\sum_{i=0}^{n}C_iC_{n-i}$$ for $n\ge0$.

By way of induction, let $f(n)$ be the number of fixed point portraits for $\sigma_d$ using the fixed points $\{\_0,\_1,\dots,\_(n-1)\}$. We need to show $f(n)$ satisfies the above recurrence relation. We define $f(0)=1$.

Now, consider a portrait for $\sigma_{d+1}$ using fixed points $\{\_0,\_1,\dots,\_n\}$ that contains $\_n$. Also, let $\_k$ be the least fixed point in this portrait, i.e., $\_k$ and $\_n$ are connected and no fixed points of $\{\_0, \_1,\dots,\_(k-1)\}$ is connected with any fixed points of $\{\_k,\dots,\_n\}$. Therefore, $\{\_0, \_1,\dots,\_(k-1)\}$ can form portraits on their own and from the induction hypothesis, there are $f(k)$ such portraits.
Also, there are $(n-k+1)$ fixed points from $\_k$ to $\_n$. All of these fixed points lie on the arc of the circle that is bounded by the chord $\overline{\_k\_n}$. Therefore, any portraits among $\{\_k,\dots,\_n\}$ will not cross the the chord $\overline{\_k\_n}$. Furthermore, since $\_k$ and $\_n$ are already connected there are $(n-k)$ fixed points to form portraits from $\_k$ to $\_n$. Hence, by the induction hypothesis, there are $f(n-k)$ portraits.
% \begin{figure}[h!]
%     \centering
%     \includegraphics[width = 2in]{images/s6 with one fixed leaves - pullback 0 (1) (1).pdf}
%     \caption[Fixed point portrait partitioning $\overline{\mathbb D}$]{
%    A fixed point portrait for $\sigma_6$ that partitions $\overline{\mathbb D}$ into two lower degree fixed point portraits.}
%    \label{fig:counting}
% \end{figure}

Now, the set $\{\_0,\dots,\_n\}$ has $(n+1)$ fixed points and $\_k$ ranges ranges from $\_0$ to $\_n$. Thus we have, $$f(n+1)=\sum_{k=0}^{n}f(k)f(n-k)$$  
Therefore, by induction, we can conclude that the number of fixed point portraits of $\sigma_d$ satisfies the recurrence relation of the Catalan numbers.
\end{proof}

\section{Canonical Laminations for Fixed Point Portraits}\label{canLam}

\subsection{Canonical Critical Portraits for Fixed Point Portraits}
\begin{defn}[Degree of a fixed sector] The {\em degree of a fixed sector} is the cardinality of the maximal number of disjoint critical chords that is compatible with the region.
\end{defn}
\begin{thm}
    The degree of a fixed sector is one more than the number of arcs of the circle $\mathbb{S}$ between the adjacent fixed points in the boundary of that region. 
\end{thm}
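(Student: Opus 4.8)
The plan is to read the ``degree'' of a fixed sector $S$ as the number of sheets of $\sigma_d|_S$ — equivalently, the number of critical sectors into which a maximal compatible family of disjoint critical chords cuts $S$ — and to compute it as the winding number of the image curve $\sigma_d(\partial S)$ about a generic interior point $w\in\Int S$. Since $\sigma_d^\#$ is an orientation-preserving branched covering (Section~\ref{extendsig}), the number of preimages of $w$ lying in $S$ equals $\mathrm{wind}(\sigma_d(\partial S),w)$, so it suffices to evaluate this winding number. Write $m$ for the quantity in the statement, i.e.\ the number of fundamental arcs $I_i=[\,p_i,p_{i+1}\,]$ (arcs between adjacent fixed points, each of length $\tfrac{1}{d-1}$) that lie on $\partial S$.

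The key local computation is how $\sigma_d$ treats the two kinds of boundary pieces. Every fixed leaf on $\partial S$ joins two fixed points and is therefore mapped to itself, traversed in the same direction. Each fundamental arc $I_i$ begins and ends at the fixed points $p_i,p_{i+1}$, and its image has length $d\cdot\tfrac{1}{d-1}=1+\tfrac{1}{d-1}$; since $\sigma_d(p_i)=p_i$, the image is a full counterclockwise loop of $\mathbb{S}$ based at $p_i$ followed by an arc of the remaining length $\tfrac{1}{d-1}$ from $p_i$ to $p_{i+1}$ — that is, by an exact copy of $I_i$ itself. Thus, as we traverse $\partial S$ once counterclockwise, $\sigma_d(\partial S)$ is the concatenation, in cyclic order, of (one full loop of $\mathbb{S}$)$\,\cdot\,$(a copy of $I_i$) for each fundamental arc, interleaved with the fixed leaves mapped identically.

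I would then finish by additivity of the winding number. The $m$ inserted full loops of $\mathbb{S}$ are closed curves, each winding $+1$ about $w$. Deleting them leaves the copies of the $I_i$ together with the fixed leaves, concatenated in the order of $\partial S$: this residual closed curve is exactly $\partial S$ itself, which winds $+1$ about the interior point $w$. Hence $\mathrm{wind}(\sigma_d(\partial S),w)=m\cdot 1+1=m+1$, so the degree is $m+1$. By Riemann--Hurwitz this is consistent with $S$ carrying exactly $m$ critical points, so a maximal compatible family of disjoint critical chords has $m$ members and refines $S$ into $m+1$ critical sectors, matching the asserted value.

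The main obstacle I expect is the bookkeeping in the reduction and in the decomposition rather than any single hard estimate: one must justify that ``degree'' is the sheet/winding count for the (non-analytic) extension $\sigma_d^\#$, and, more delicately, that the full-circle overshoot of each fundamental arc is \emph{exactly} one fundamental arc and is correctly factored out of the concatenation — including the case where several fundamental arcs are consecutive on $\partial S$, separated only by intermediate fixed points rather than by fixed leaves. Care is also needed to choose $w$ off every fixed leaf, and to check that $\sigma_d(\partial S)$ meets $\Int S$ in no critical values, so that the winding number is well defined and constant on $\Int S$.
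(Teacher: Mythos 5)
Your proof is correct, but it takes a genuinely different route from the paper's. The paper's entire proof is one line of metric counting: an arc between adjacent fixed points has length $\frac{1}{d-1}$, a critical chord has length at least $\frac{1}{d}$, and since $\frac{1}{d}<\frac{1}{d-1}<\frac{2}{d}$ each such arc can carry exactly one chord of a maximal disjoint compatible family; the passage from ``one chord per arc'' to ``degree $=m+1$'' is left implicit. You instead compute the covering degree intrinsically, as the winding number of $\sigma_d(\partial S)$ about a generic interior point, using the exact overshoot $d\cdot\frac{1}{d-1}=1+\frac{1}{d-1}$ of each fundamental arc together with the identity behavior on fixed leaves, and only afterwards recover the chord count via Riemann--Hurwitz. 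Your version buys two things the one-liner does not: it is insensitive to how a maximal family is actually arranged (compatible critical chords need not lie inside a single fundamental arc --- they can subtend intermediate fixed points or be nested, a configuration the paper's argument silently ignores), and it makes the global bookkeeping automatic, since for a fixed generic $w$ the winding numbers of the images of all sector boundaries sum to $d$. The costs are that you need an extension of $\sigma_d$ over the sector and a topological argument principle; note that the paper's $\sigma_d^{\#}$ is constructed for hyperbolic laminations, but your computation only uses $\sigma_d|_{\partial S}$ with fixed leaves mapped linearly onto themselves, so any orientation-preserving extension suffices, and this should be said explicitly. Finally, your opening reinterpretation is doing real work and deserves emphasis rather than an aside: the paper's stated definition (degree $=$ cardinality of a maximal disjoint family of compatible critical chords) literally evaluates to $m$, not $m+1$, so the statement is true only under your sheet-count reading (equivalently, the number of critical sectors, since $m$ disjoint chords cut $S$ into $m+1$ pieces each of degree one) --- which is also the reading consistent with the paper's later usage, where a degree-$n$ fixed sector carries an all-critical $n$-gon.
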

\begin{proof}
    Each arc between two adjacent fixed points (length $\dfrac{1}{d-1}$) can contain only one critical chord (length $\dfrac{1}{d}$). See Figures \ref{fig:quarter fixed leaf} and \ref{fig:no fixed leaves2} for examples.
\end{proof}

The definition below  for fixed point portraits is similar to Definition \ref{can crit portrait}.

\begin{defn}  [Canonical Critical Portraits for Fixed Point Portrait] \label{ccp for fpp} Let $\mathcal{P}$ be a fixed point portrait. For each fixed sector in $\mathcal{P}$ a maximal all-critical polygon that touches a fixed point in the boundary of that sector is called a {\em canonical critical portrait} for $\mathcal{P}$.
\end{defn} 
In the next theorems, we will show that it does not matter which fixed point in a given fixed sector the all-critical polygon touches. This justifies calling the critical portrait canonical.

\begin{thm}\label{Compatible} There are infinitely many critical portraits compatible with the initial FPP data of leaves connecting fixed points but there are only finitely many critical portraits that are canonical, i.e., touch the fixed points.
\end{thm}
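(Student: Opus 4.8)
The plan is to prove the two halves separately, both flowing from a single dichotomy: a generic critical chord can be slid continuously, whereas pinning one of its vertices to a fixed point leaves only finitely many placements.

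\emph{Infinitely many compatible portraits.} I would first record that a chord $\ol{ab}$ is critical precisely when $b=a+k/d\pmod 1$ for some $k\in\{1,\dots,d-1\}$, so the critical chords form finitely many one-parameter families in the endpoint $a$. Fix an FPP $\mathcal P$. Each of the $d-1$ circle gaps between consecutive fixed points is an arc of exactly one fixed sector and has length $1/(d-1)$; since $1/(d-1)>1/d$, such an arc accommodates a critical chord of length $1/d$ with both endpoints on it, and this chord may be translated through an open interval of width $1/(d-1)-1/d=1/(d(d-1))>0$ while remaining interior to its sector. Placing one such chord in each of the $d-1$ gaps produces a non-crossing collection of $d-1$ critical chords --- a critical portrait --- meeting the fixed leaves of $\mathcal P$ only along $\ucirc$, hence compatible with $\mathcal P$. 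Distinct interior positions of even a single chord give distinct portraits, and the admissible positions form an interval, so there are uncountably many, in particular infinitely many, compatible critical portraits.

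\emph{Finitely many canonical portraits.} Here I would use that every fixed point $f=i/(d-1)$ satisfies $\sigma_d(f)=f$. If a canonical all-critical polygon $Q$ in a sector $S$ touches $f$, then all vertices of $Q$ share the common image $\sigma_d(f)=f$ and hence lie in the finite set $\sigma_d^{-1}(f)$, consisting of exactly the $d$ equally spaced points $(f+j)/d$, $j=0,\dots,d-1$. Maximality then forces $Q$ to be the convex hull of precisely those preimage points lying in $S$, so $Q$ is determined by the single discrete datum of which boundary fixed point of $S$ it touches. A canonical critical portrait thus amounts to choosing, for each fixed sector, one of its at most $d-1$ boundary fixed points; as $\mathcal P$ has finitely many sectors, the number of canonical critical portraits is finite.

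The step I expect to be the crux lies in the first half: confirming that the sliding interval is genuinely nonempty once the structure of $\mathcal P$ is imposed, and that the one-chord-per-gap construction really is a \emph{maximal} critical portrait. This reduces to the geometric bookkeeping behind the degree theorem --- each boundary arc runs between circle-consecutive fixed points (so no interior fixed point shrinks it below length $1/d$), the $d-1$ gaps distribute among the sectors so that the total is exactly $d-1$ chords, and within a length-$1/(d-1)$ arc there is room for exactly one critical chord but not a second one. The strict inequality $1/(d-1)>1/d$ is what simultaneously guarantees both the maximality and the continuous freedom.
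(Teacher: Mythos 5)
Your proof is correct and takes essentially the same approach as the paper's (very terse) argument: infinitely many compatible portraits because a critical chord of length $\frac{1}{d}$ can slide freely within each arc of length $\frac{1}{d-1}$ between consecutive fixed points, and only finitely many canonical portraits because pinning an all-critical polygon to a fixed point $f$ confines its vertices to the finite set $\sigma_d^{-1}(f)$. You have simply made explicit the quantitative details (the nonempty sliding interval of width $\frac{1}{d(d-1)}$, and the discrete vertex data) that the paper asserts without elaboration.
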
 

\begin{proof}

Given any initial laminational data (including a FPP), there are an infinite number of compatible critical chords. However, if we restrict to critical portraits that touch the fixed points, there are only a finite number of ways for a chord of critical length to touch one of the finite number of fixed points.

\end{proof}
\begin{thm}\label{CPPullback} Canonical critical portraits compatible with a given fixed point portrait result in the same sibling portrait in the first pullback and thus in all subsequent pullbacks.
\end{thm}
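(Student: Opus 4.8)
The plan is to prove that the first pullback $F_1$ is literally the same set of leaves no matter which boundary fixed point each all-critical polygon of the canonical portrait touches, and then to propagate this equality to every higher stage by the same argument applied inductively. The guiding observation is that each FPP leaf $\ell=\overline{pq}$ joins two fixed points, so the preimages of its endpoints under $\sigma_d$ are determined by $\sigma_d$ alone, independently of the critical portrait $C$: the $d$ preimages of $p$ and the $d$ preimages of $q$ occupy fixed, interleaved positions on $\ucirc$. Every leaf that a branch of the inverse contributes to $F_1$ has the form $\tau_i(\ell)=\overline{\tau_i(p)\tau_i(q)}$, a chord joining one preimage of $p$ to one preimage of $q$. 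Hence the \emph{only} thing that can vary with $C$ is the pairing of preimages, and the whole theorem reduces to showing this pairing is forced.

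First I would record, from Definition \ref{def: branches of the inverse}, that each branch $\tau_i$ is one-to-one and order-preserving with $\sigma_d\circ\tau_i=\mathrm{id}$, and that $\tau_i$ carries $\ucirc$ into the closure of a single critical sector $S_i$. Since $\tau_i$ preserves circular order and each $S_i$ subtends an arc of length $1/d$, the points $\tau_i(p)$ and $\tau_i(q)$ land close together, so $\tau_i(q)$ is a preimage of $q$ \emph{adjacent} to $\tau_i(p)$, with the side forced by the requirement $\tau_i(\ell)\subset\overline{S_i}$. This yields an intrinsic, $C$-free description of $F_1\setminus F_0$ as the collection of ``consecutive-preimage'' leaves of the FPP leaves. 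I would emphasize that one cannot prove this by localizing to a single fixed sector: the critical sectors cut out by a canonical portrait need not respect the fixed-sector partition (a critical sector may straddle an FPP leaf), which is exactly why the preimage-pairing description, rather than a sector-by-sector bookkeeping, is the right vehicle.

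The hard part will be the boundary case in which an all-critical polygon touches a fixed point $p$ that is \emph{itself} an endpoint of an FPP leaf $\ell=\overline{pq}$. Then $p$ is simultaneously an endpoint of $\ell$ and a vertex of a critical chord, and a priori $\tau_i(p)$ is two-valued, since both endpoints of that critical chord are preimages of $p$ collapsing to the same point. I must show the order-preserving convention for $\tau_i$ selects the endpoint consistently, so that the resulting consecutive-preimage leaf coincides with the one produced when the polygon is attached elsewhere. Concretely this is the verification that, in every admissible configuration, the one-sided limit of the branch at the critical value returns the adjacent preimage dictated by the cyclic order; and it is here that compatibility of $C$ with the FPP (the definition of \emph{compatible}, as used in Definition \ref{ccp for fpp}) does the real work. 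Compatibility forbids precisely those critical chords whose attachment would reverse the pairing, breaking the apparent symmetry between the two candidate sibling collections and leaving exactly one. I expect this ambiguity-resolution step to be the crux of the argument.

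Finally, with $F_1$ shown independent of $C$, I would deduce the statement for all stages by induction. Every leaf of $F_n$ has endpoints that are iterated preimages of fixed points, so the two ingredients used above, order-preservation of the canonical branches and compatibility of the canonical critical chords with the already-constructed leaves, apply verbatim to $F_n=F_{n-1}\cup\bigcup_i\tau_i(F_{n-1})$. Thus if $F_{n-1}$ agrees for two canonical portraits then so does $F_n$, and the sibling portrait is the same at every stage, which is the assertion ``in the first pullback and thus in all subsequent pullbacks.''
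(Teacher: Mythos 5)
Your skeleton is the same as the paper's: reduce everything to showing the first pullback $F_1$ is the same leaf set for every canonical portrait, force the pairing of endpoint-preimages by order-preservation plus the fact that each critical sector contains exactly one preimage of each endpoint of each FPP leaf, then induct stage by stage. Your ``consecutive-preimage'' description is essentially the paper's observation that preimages of the $d-1$ fixed points are evenly interleaved, so each critical interval contains exactly one preimage of each fixed point --- except the touching fixed point, which contributes two. And you have correctly located the crux: the branch $\tau_i$ is two-valued exactly at a touching fixed point $p$ that is an endpoint of an FPP leaf $\ell=\cl{pq}$, since both endpoints of the adjacent critical chord are preimages of $p$.

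The gap is that you defer this crux and your proposed mechanism for closing it does not work. Compatibility (Definition as used in \ref{ccp for fpp}) only says $\cup C$ meets the forward invariant set at endpoints of leaves or vertices of polygons; it does not select between the two one-sided values of $\tau_i$ at the critical value $p$. Concretely, in the ambiguous critical interval the unique preimage of $q$ can be joined to \emph{either} endpoint of the bounding critical chord: both candidate chords lie in the closure of the same critical sector, both meet the portrait only at an endpoint (hence both are compatible), neither crosses $\ell$ or the other pullback leaves, and each extends to a full pairwise-disjoint sibling collection. (Try $\sigma_3$ with $\ell=\cl{0\,(1/2)}$ and the chord $\cl{0\,(1/3)}$: both $\cl{0\,(1/6)}$ and $\cl{(1/6)(1/3)}$ survive every constraint you invoke.) So ``compatibility forbids precisely those critical chords whose attachment would reverse the pairing'' is false as stated, and without an actual selection rule your $C$-free description of $F_1\setminus F_0$ is not established. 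What actually forces agreement across canonical portraits is that the leaf joining the two \emph{non-fixed} preimages in the ambiguous interval is the only choice common to all placements of the touching vertex --- this is what the paper's critical-interval count is implicitly doing, and it is a claim about comparing the schemes, not a consequence of compatibility of a single scheme. A smaller imprecision: a non-terminal critical sector subtends \emph{several} arcs of total measure $1/d$, not one arc, and the chord joining the sector's preimage of $p$ in one arc to its preimage of $q$ in another arc (non-adjacent in the interleaved order) also lies in the closure of the sector; so your adjacency conclusion needs the branch values pinned down first, which returns you to the unresolved ambiguity. The induction step is fine in outline but inherits the same gap at every stage, since pullback leaves keep landing on vertices of the critical chords.
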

\begin{proof} Suppose for a fixed $d$, we are given a fixed point portrait. By Proposition \ref{fxd}, the map $\sigma_d$ has $d-1$ fixed points on $\mathbb{S}$. Also, each of the $d-1$ fixed points pulls back to $d$ pre-images spaced evenly on $\mathbb{S}$, one of them the fixed point itself. Since fixed points on $\mathbb{S}$ are spaced evenly and pre-images of individual fixed points are also spaced evenly on $\mathbb{S}$, between any two consecutive fixed points there is exactly one pre-image of each of the other fixed points.

First, we consider a simple case where the fixed point portrait is just a leaf connecting any two consecutive fixed points. Then the canonical critical portrait compatible with this fixed point portrait will consist of a critical chord and an all-critical $(d-1)$-gon. The single critical chord lies in the smaller fixed sector of $\mathbb{S}$ bounded by the fixed leaf, and one of its ends touches one of the two fixed points (see Figure \ref{fig:First_pullback}). Therefore, the critical chord will connect the fixed endpoint and the pre-image of that fixed point in this fixed sector in the first pullback. The $(d-1)$-gon lies in the other fixed sector of $\mathbb{S}$ and one of its vertices touches one of the $d-1$ fixed points in this sector. Therefore, the sides of the $(d-1)$-gon will connect the consecutive pre-images of the vertex fixed point. We call these segments of $\mathbb{S}$ bounded by the critical chord and by the sides of the $(d-1)$-gon as critical intervals. Every critical interval contains pre-images of each fixed point exactly once except for the touching fixed point, which has two pre-images in every interval.

Now, in the critical interval bounded by the critical chord, we have only one choice of pre-image leaf. Thus, if we change the touching fixed point of the critical chord, it would not affect the pullback leaf and the leaf will remain the same. In the critical intervals bounded by the sides of the $(d-1)$-gon, we have exactly one pre-image of each fixed point except for the touching fixed point, which has two pre-images; we have exactly one option to pullback the fixed leaf in every critical interval. 

Moreover, since the pullback leaves are unique in every interval, shifting the critical polygon to a different touching fixed point will not affect the pullback leaves. Therefore, we can conclude that we will have the same siblings irrespective of the choice of the touching fixed point. Since we have the same sibling portrait in the first pullback and the branches of inverse are also fixed, we will have the same sibling portrait in every pullback. Therefore, the lamination is canonical.

In each fixed sector of local degree $d'$, there is an all critical $d'$-gon. The number of the pre-images of the fixed points in each critical interval of that fixed sector allow for only one way to connect pre-images of the initial FPP data. Details are left to the reader.
\end{proof}

\begin{figure}[H]
\centering
 \includegraphics[width=.4\textwidth]{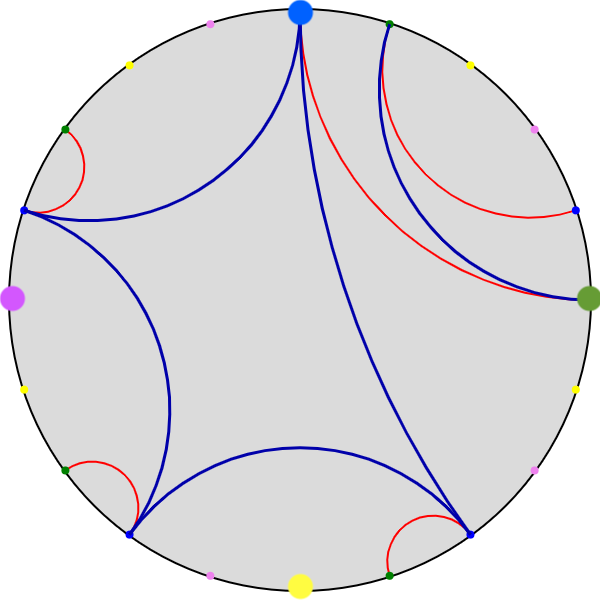} \hspace{.5in}\includegraphics[width=.4\textwidth]{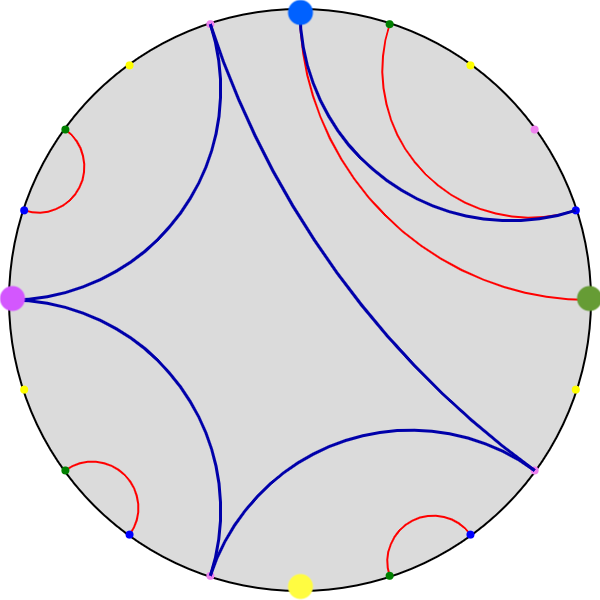}

\caption[First pullback lamination of two canonical critical portraits of $\sigma_5$]{Here we have the same fixed leaf connecting the blue and green fixed points. We choose two canonical critical portraits: one that touches at the blue fixed point and one that touches at the pink fixed point. The first pullback lamination with either critical portrait results in the same sibling portrait.}
    \label{fig:First_pullback}
\end{figure}

% \noindent {\bf Algorithm} for counting the number of canonical critical portraits compatible with some fixed point portrait for a given $d$.

%  To count the number of canonical critical portraits compatible with some fixed point portrait we need to find the number of fixed points in every critical sector described by the fixed point portrait. If a given sector has $n$ fixed points, then the component of the canonical critical portrait will be an $n$-gon in that sector. This $n$-gon can touch any of the $n$ fixed points in that sector, and the touching of different fixed points corresponds to components of different canonical portraits. Also, components of critical portraits in each sector are independent of components of critical portraits in other sectors. Now, every canonical critical portrait has exactly one component in each critical sector, so the total number of critical portraits is the combination of components from each critical sector. Since each sector is independent, the total number of critical portraits is the product of the number of components from each sector.

\subsection{Canonical Laminations for Fixed Point Portraits}

\begin{defn} [Canonical Lamination for Fixed Point Portrait] \label{clfpp} Let $\mathcal{P}$ be a fixed point portrait. Choose for each fixed sector in $\mathcal{P}$ a maximal all-critical polygon touching a fixed point in the boundary of that sector. Then the pullback lamination $\mathcal{L}$ with respect to the chosen critical portrait is the {\em canonical lamination} for $\mathcal{P}$. 
\end{defn}
It follows from Theorems \ref{Compatible} and \ref{CPPullback} that Definition \ref{clfpp} is well-defined. That is, it does not matter which canonical critical portrait is used in the pullback scheme. See Figures \ref{fig:First_pullback} and \ref{fig:Canonical}.

\begin{figure}[H]
    \centering
    \includegraphics[width = 1.8in]{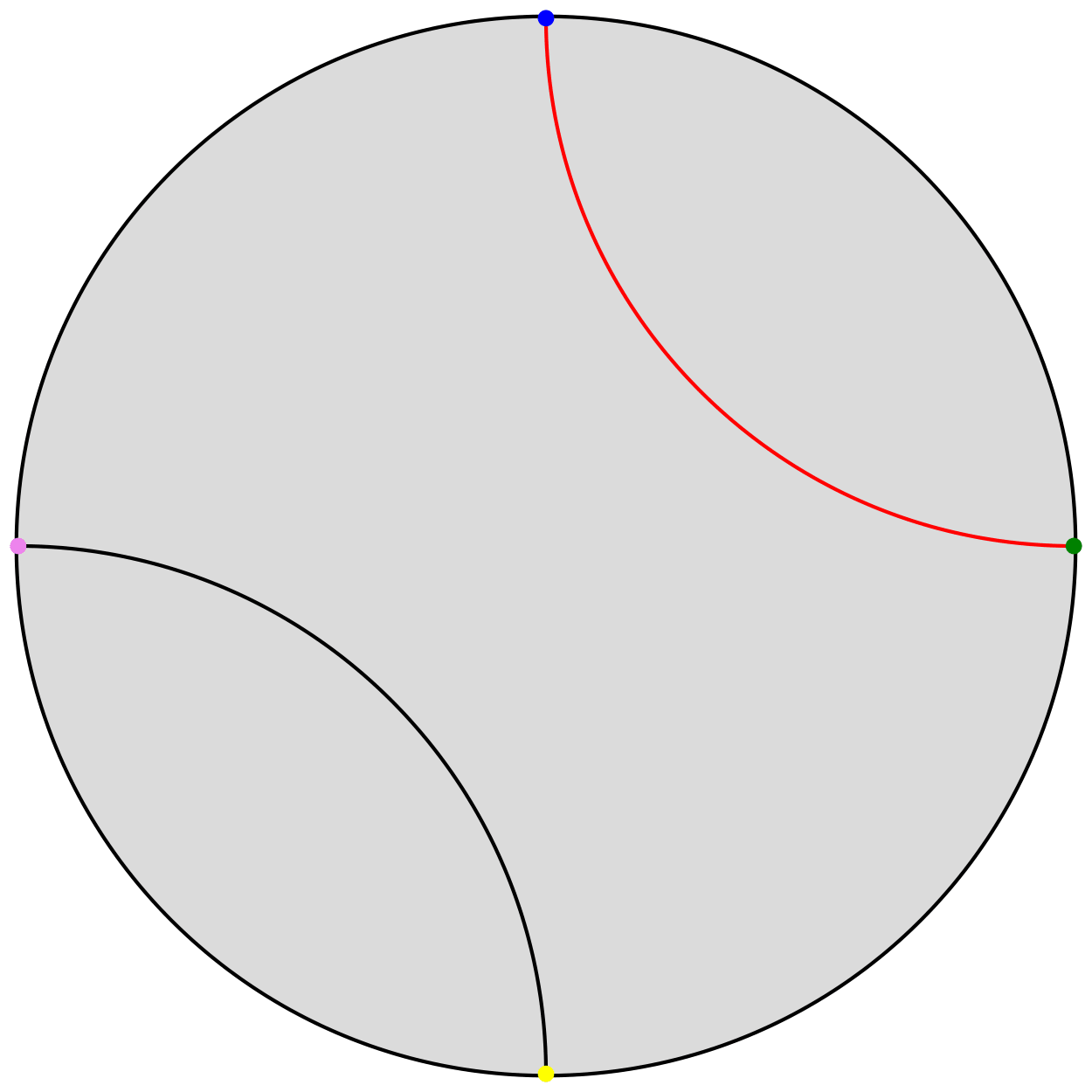}
    \includegraphics[width = 1.8in]{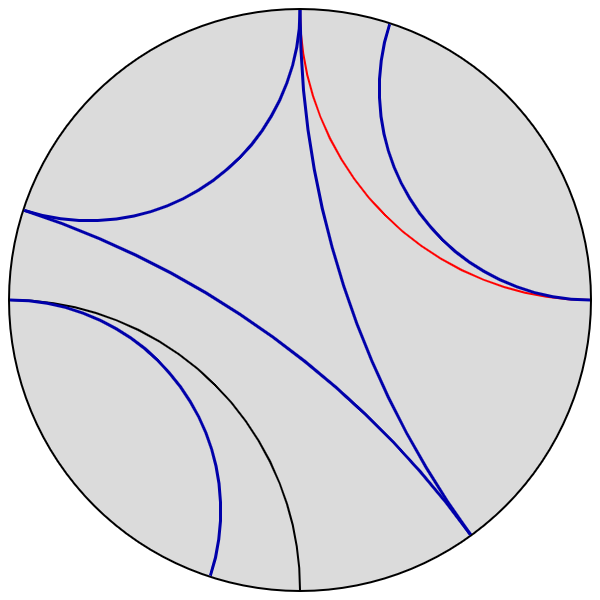}
    \includegraphics[width = 1.8in]{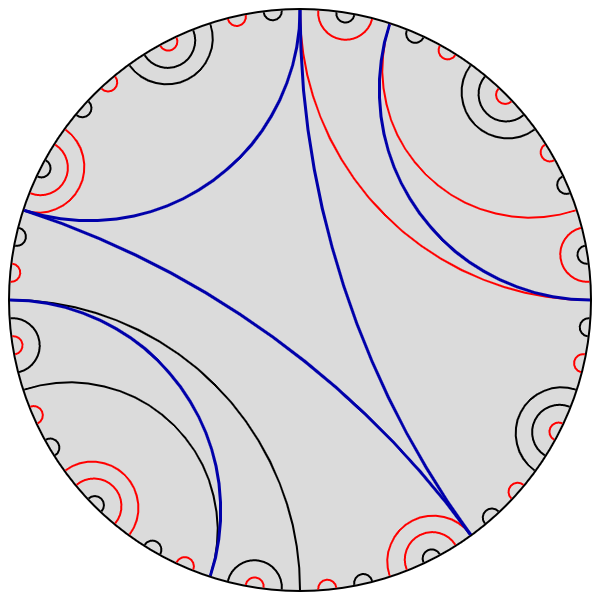}
    \caption[Canonical lamination]{
   This is a canonical lamination for $\sigma_5$ with two fixed leaves. On the left we have the two fixed leaves. In the middle, the critical portrait is added in. On the right, we have the first few pullbacks of the lamination.}
   \label{fig:Canonical}
\end{figure}

\begin{thm} [Properties of Canonical Fixed Point Lamination] \label{CLP} Let $\mathcal{P}$ be a fixed point portrait and $\mathcal{L}$ be its canonical pullback lamination. Then $\mathcal{L}$ has the following properties:
\begin{enumerate}
    \item Every pullback leaf is a pre-image of a leaf of $\mathcal{P}$.
    \item Pullback leaves converge to points of $\mathbb S$.
    \item There are no limit leaves.
    \item Every fixed sector contains a unique invariant critical Fatou gap whose center is a fixed point.
    \item The boundary leaves of the Fatou gap are pre-images of boundary leaves of that fixed sector. 
\end{enumerate}
\end{thm}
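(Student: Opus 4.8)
The plan is to treat the five properties in the order listed, taking the explicit pullback description of $\mathcal{L}$ as the closure of $F_\infty=\bigcup_n F_n$, where $F_0=\mathcal{P}$ and $F_n=F_{n-1}\cup\bigcup_i \tau_i(F_{n-1})$ for the branches $\{\tau_1,\dots,\tau_d\}$ of the canonical pullback scheme (Definition \ref{def: branches of the inverse} and Theorem \ref{CPPullback}), as the common backbone of the argument. Property (1) I would obtain by induction on the pullback stage: $F_0=\mathcal{P}$, and any leaf first appearing at stage $n$ has the form $\tau_i(\ell')$ with $\ell'\in F_{n-1}$, so $\sigma_d$ carries it to $\ell'$ and, iterating, back into $\mathcal{P}$; hence every pullback leaf is a pre-image of a leaf of $\mathcal{P}$.

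For (2) and (3) the engine is a contraction estimate for the inverse branches. Each branch $\tau_i$ of Definition \ref{def: branches of the inverse} carries an arc of $\mathbb{S}$ onto a strictly shorter arc inside a single critical sector, the contraction factor being controlled by the expansion of $\sigma_d$. Thus a leaf produced by a composition $\tau_{i_1}\circ\cdots\circ\tau_{i_k}$ has its two endpoints confined to a nested arc whose length decays geometrically in $k$. Since only finitely many pullback leaves arise from compositions of any bounded length, any infinite family of distinct pullback leaves must involve compositions of unbounded length, so their diameters tend to $0$; this is precisely the statement (2) that pullback leaves converge to points of $\mathbb{S}$. Property (3) is then formal: a nondegenerate limit leaf would be a Hausdorff limit of pullback leaves whose diameters are bounded below, contradicting the decay, so passing to the closure $\cl{F_\infty}$ adjoins only degenerate leaves and no new nondegenerate (``limit'') leaves occur.

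The substantive content is (4). Fix a fixed sector $S$ of local degree $d'$, bounded by fixed leaves of $\mathcal{P}$ and arcs of $\mathbb{S}$, and let $Q$ be the canonical all-critical $d'$-gon placed in $S$ with a vertex at the fixed point $p\in\partial S$ (Definition \ref{ccp for fpp}). I would first single out the branch $\tau$ of the inverse attached to the critical sub-sector of $S$ containing $p$; since $\sigma_d(p)=p$, $\tau$ is order preserving, and $\sigma_d\circ\tau=\mathrm{id}$, this branch fixes $p$. Pulling the boundary leaves of $S$ back through the branches of $S$ produces the $d'$ sides of $Q$ together with leaves that nest monotonically toward $p$ in each of the $d'$ directions cut out by $Q$. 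I would define $U$ to be the gap of $\mathcal{L}$ carrying the vertices of $Q$ on its boundary and then verify: (i) $U$ is infinite and its boundary meets $\mathbb{S}$ in a Cantor set, hence $U$ is a Fatou gap in the sense of Definition \ref{def:fatougap}, because the self-similar nesting furnishes infinitely many boundary leaves accumulating on an uncountable, perfect, nowhere dense subset of $\mathbb{S}$; (ii) $U$ is critical, since the vertices of the all-critical $d'$-gon lie on $\partial U$ and share a single image; and (iii) $\sigma_d(U)=U$, since $\sigma_d$ permutes the boundary leaves of $U$ --- the $d'$ sides of $Q$ collapse to the fixed center $p$ and every other boundary leaf maps to a strictly shallower boundary leaf --- so the degree-$d'$ restriction $\sigma_d|_{\partial U}$ returns $U$ onto itself with $p$ as its fixed center. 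Uniqueness follows because any Fatou gap in $S$ must be critical and therefore must carry a critical chord compatible with the portrait; the canonical portrait concentrates all the criticality of $S$ in the single polygon $Q$ attached to $U$, so no second critical Fatou gap can occur in $S$. Property (5) is then immediate from (1) together with the sector-respecting nature of the pullback: each boundary leaf of $U$ is, by (1), a pre-image of some leaf of $\mathcal{P}$, and since it lies in the interior of $S$ and is obtained through branches attached to $S$, the leaf it covers must be a boundary leaf of $S$ itself.

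I expect the main obstacle to be step (4)(i)--(iii): rigorously showing that the central region $U$ is a \emph{single} infinite gap with Cantor boundary and is \emph{exactly} invariant. The delicate point is the bookkeeping of the self-referential pullback near $p$ --- one must confirm that the nested pullback leaves leave behind an uncountable boundary rather than closing up on finitely many leaves, and that $\sigma_d$ permutes these boundary leaves so as to carry $U$ onto itself with the correct degree $d'$ rather than onto a proper sub-gap or a neighboring gap.
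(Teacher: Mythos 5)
Your proposal follows the paper's proof essentially step for step: (1) by the same observation that pullback leaves map forward onto leaves of $\mathcal{P}$; (2)--(3) by the same geometric contraction (the paper states the explicit bound $\frac{1}{2d^n}$ on the length of a stage-$n$ pullback leaf and deduces (3) exactly as you do, though your remark that finitely many leaves arise at each bounded stage is actually a welcome extra precision); and (4)--(5) by the same canonical-polygon pullback construction. The paper's key device in (4)--(5) is that pullbacks of leaves that are not boundary leaves of the fixed sector $F$ are \emph{subtended} by pullbacks of the boundary leaves of $F$ (since pullback preserves circular order), so iterating leaves behind a central gap meeting $\mathbb{S}$ in a Cantor set, and (5) falls out immediately; your items (i)--(iii) make explicit what the paper compresses into ``the pullback process uniquely determines the Fatou gap,'' and your uniqueness argument (an \emph{invariant} Fatou gap must be critical, and all the criticality of the sector is concentrated in the single all-critical $d'$-gon) is a legitimate sharpening of what the paper leaves implicit.

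One caveat: your phrase ``$p$ as its fixed center'' conflates the circle fixed point $p\in\mathbb{S}$ (a vertex of the canonical polygon, lying on $\partial U\cap\mathbb{S}$) with the \emph{center} of the Fatou gap, which in the paper's usage is the interior fixed point of the extension $\sigma_d^{\#}$ of $\sigma_d$ to the disk (Subsection \ref{extendsig}; this is exactly how the center is treated in Theorem \ref{IFP}, Case I). The boundary dynamics you verify --- $\sigma_d$ returning $\partial U$ to itself with degree $d'$ and collapsing the sides of $Q$ to $p$ --- establishes invariance and criticality, but the clause ``whose center is a fixed point'' in (4) comes from the extension $\sigma_d^{\#}$, not from $p$; you should either invoke that extension or restate your claim so that $p$ is a fixed point on the boundary of $U$ rather than its center.
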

\begin{proof}
    (1) Every point on $\mathbb{S}$ for the pullback lamination $\mathcal{L}$ is a pre-image of the endpoints of the leaves of $\mathcal{P}$. So in $\mathcal{L}$, if we connect two pre-images of two endpoints of a leaf of $\mathcal{P}$ by a leaf, then this leaf in $\mathcal{L}$ maps forward to the corresponding leaf of $\mathcal{P}$. Therefore, every pullback leaf is a pre-image of a leaf of $\mathcal{P}$.\\

    \noindent (2) Let $\mathcal{L}$ be a canonical lamination of the fixed point portrait $\mathcal{P}$, i.e., $\mathcal{L}$ touches some fixed points of $\mathcal{P}$. We can see the initial pullback of a fixed leaf is trapped under a critical chord. Since the maximum length of a fixed leaf is $\dfrac{1}{2}$, the maximum length of pullback leaf is $\dfrac{1}{2d}$. Thus, $n$ pullbacks will be bounded in length by $\dfrac{1}{2d^n}$.\\

    \noindent (3) Since from (2) we have pullback leaves converge to points, there are no limit leaves.\\

    \noindent (4) Let $F$ be a fixed sector for $\mathcal{P}$. Let the degree of the sector $F$ be $n$. Then, there is a canonical all-critical $n$-gon (Definition \ref{ccp for fpp}), touching one of the fixed points on the boundary of $F$. The chords of the all-critical polygon determine the critical sectors of $F$. When we pull back, each of the critical sectors has a pre-image of every fixed point inside it. In the first pullback, we will see a pre-image of each fixed leaf in that critical sector. The boundary leaves of $F$ subtend any other fixed leaves of $\mathcal{P}$. Since the pullback of the fixed points preserves counterclockwise circular order, the pullbacks of other fixed leaves that are not boundary leaves of $F$ will be subtended by pullbacks of the boundary leaves of $F$. If we continue pulling back, we will have a gap $G$ in $F$ that meets the circle in a Cantor set. By Definition \ref{def:fatougap}, $G$ is a Fatou gap. The pullback process uniquely determines the Fatou gap.\\

    \noindent (5) From the proof of (4) we have seen that in the first pullback inside $F$, the pullback leaves that are not pullbacks of boundary leaves of $F$ are subtended by the pullbacks of the boundary leaves of $F$. Therefore, the boundary leaves of the Fatou gap are pre-images of the boundary leaves of that fixed sector.
\end{proof}

\subsection{Fixed Points in a Lamination} Every lamination of degree $d$ is supposed to correspond to a polynomial of degree $d$. Every polynomial of degree $d$ has $d$ fixed points. But on the boundary of $\mathbb{S}$ we have exactly $d-1$ fixed points. If in the lamination two fixed points are joined by a leaf, then in the corresponding Julia set this will be a multiple fixed point which will count as only one fixed point for the polynomial. For example, in Figure \ref{fig:no fixed leaves2} four peripheral fixed points in the lamination have become two multiple fixed points in the Julia set. In the canonical case, there is an invariant Fatou gap in each fixed sector as in Theorem \ref{CLP}, with an attractive fixed point in each invariant Fatou gap. In this subsection, we will show that there exists a fixed object in the interior of every fixed sector in all non-canonical hyperbolic cases as well.

% \begin{prop}
%     Let $G$ be an invariant Fatou gap in a hyperbolic lamination. Then $G$ is critical.
% \end{prop}
% \begin{proof}
%     Since in a hyperbolic lamination, all criticality is in Fatou gaps. If $G$ is invariant, it must be critical. ???
% \end{proof}

\begin{figure} [h!]
    \begin{center}
 \includegraphics[width=.3\textwidth]{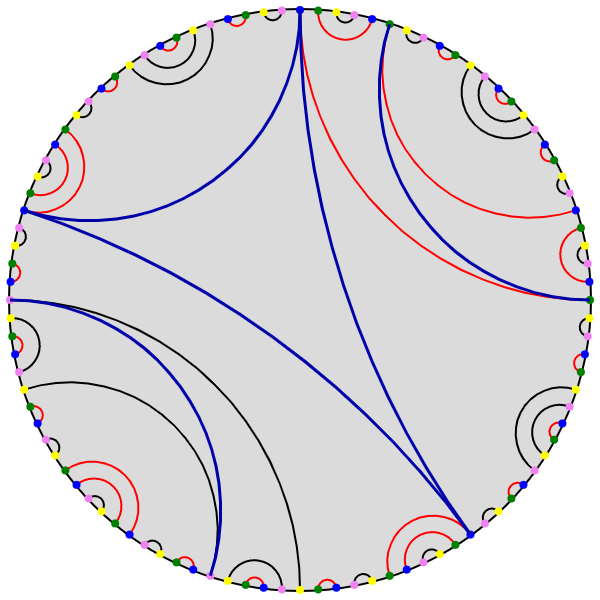}
 \includegraphics[width=.3\textwidth]{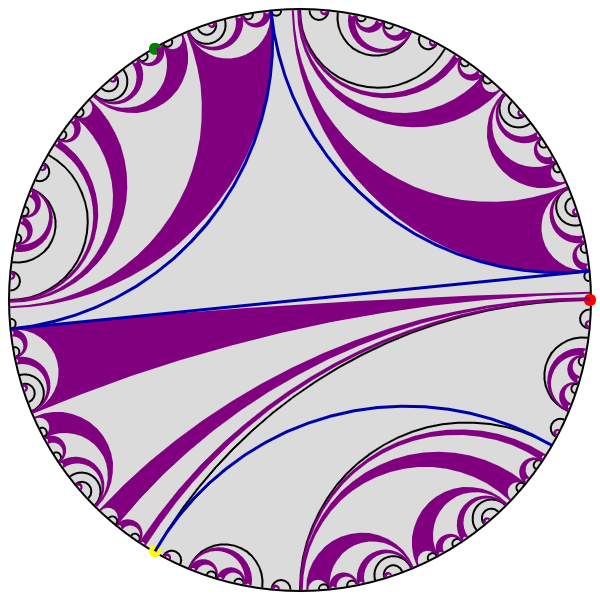}
 \includegraphics[width=.3\textwidth]{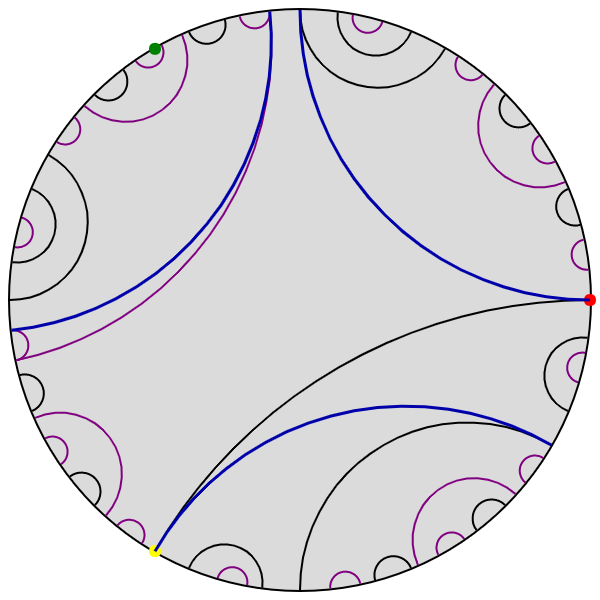}
\end{center}
    \caption[Internal Fixed Point]{Left to right cases I, II, and III of Theorem \ref{IFP}. Note that the degrees of the laminations illustrated are $5$, $4$, and $4$. On the left, we have Case I where no fixed object, say in the central degree $3$ fixed sector, is subtended by the leaves of the lamination. In the middle, we have Case II where every fixed object  in the degree $3$ fixed sector is subtended by leaves of the lamination. On the right, we have Case III  where in the degree $3$ fixed sector only some fixed objects are subtended by leaves of the lamination and some are not.}
    \label{fig:Cases}
\end{figure}

\begin{thm}\label{IFP}
Let $\mathcal{L}$ be a hyperbolic lamination containing a particular FPP $\mathcal P$. In each fixed sector of $\mathcal P$ there is a fixed object of one of the following types:
\begin{enumerate}
\item The fixed point is the center of an invariant Fatou gap.
\item The fixed object is a rotational polygon.
\end{enumerate}
%Moreover, the canonical fixed point lamination for $\mathcal P$ is a sub-lamination of $\mathcal{L}$. 
\end{thm}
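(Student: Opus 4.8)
The plan is to work one fixed sector at a time: it suffices to produce a fixed object in an arbitrary fixed sector $F$ of $\mathcal P$. Write $n$ for the degree of $F$; since every fixed sector contains at least one boundary arc of $\mathbb{S}$, the degree formula gives $n\ge 2$. By hyperbolicity every critical chord of $\mathcal L$ lying in $F$ is interior to a Fatou gap, and the all-critical $n$-gon (Definition \ref{ccp for fpp}) cuts $F$ into critical sectors. The strategy is to isolate a single distinguished fixed object at the ``center'' of $F$ and then decide, from its combinatorial type, whether we are in conclusion (1) or (2); the three pictures in Figure \ref{fig:Cases} will correspond to the cases of this dichotomy.

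First I would establish the existence of a central fixed point $c\in\Int F$. Using the branched-covering extension $\sigma_d^\#$ of Section \ref{extendsig}, the map carries $\overline{F}$ onto $\overline{\mathbb D}$ as a degree-$n$ branched cover fixing the two endpoints of $F$ on $\mathbb{S}$. I would then select the branch $\tau$ of $\sigma_d^{-1}$ that maps $\overline{F}$ into $\Int F$ and fixes no boundary fixed point of $F$. Brouwer's fixed point theorem applied to the continuous self-map $\tau\colon\overline{F}\to\overline{F}$ yields a point $c=\tau(c)$; because $\tau$ fixes no fixed point on $\partial F\cap\mathbb{S}$ and no boundary leaf, $c$ must lie in $\Int F$, whence $\sigma_d^\#(c)=c$. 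Equivalently one may take the nested intersection of the central pullback regions $\tau^k(\overline{F})$, mirroring the Fatou-gap construction in Theorem \ref{CLP}.

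Next I would classify the element $X$ of $\mathcal L$ whose (relative) interior contains $c$. As $c$ is an interior point it is not an endpoint of any leaf, and the interiors of distinct gaps (and the open chords of distinct leaves) are disjoint, so $X$ is unique; since $\sigma_d^\#(X)$ is an object of $\mathcal L$ whose interior contains $\sigma_d^\#(c)=c$, uniqueness forces $\sigma_d^\#(X)=X$. Now comes the dichotomy. If $X$ is infinite, its boundary meets $\mathbb{S}$ in a Cantor set, so $X$ is an invariant Fatou gap (Definition \ref{def:fatougap}) with the fixed point $c$ in its interior: this is conclusion (1), corresponding to \textbf{Case I}, in which the criticality of $F$ pools into the central Fatou gap and the center is not subtended by leaves. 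If instead $X$ is a finite polygon (possibly a single leaf), it is invariant with $c$ interior, and its vertices lie on $\mathbb{S}$ with none of them fixed, because the only fixed points on $\mathbb{S}$ are the endpoints of $F$, which lie on $\partial F$ rather than in $\Int F$. Hence $\sigma_d$ permutes the vertices of $X$ cyclically and, by the order-preserving property of $\sigma_d$ on gaps (the Order Preserving proposition), with a nonzero rotation number, so $X$ is a rotational polygon in the sense of Definition \ref{rotational poly}: this is conclusion (2), corresponding to \textbf{Case II}, in which the center is enclosed (subtended) by leaves. \textbf{Case III} is the mixed configuration, where part of the criticality forms peripheral Fatou gaps while the remainder surrounds $c$; the central object $X$ is still governed by the same two alternatives, so it again yields either (1) or (2).

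The main obstacle I anticipate is the existence-and-location step: pinning down the central fixed point $c$ and the object $X$ containing it \emph{independently of how the (possibly non-canonical) criticality is distributed in $F$}, and excluding the degenerate possibilities that $c$ escapes to $\mathbb{S}$, that $X$ is an unexpected critical gap, or that $c$ is an inaccessible limit of leaves. This is exactly where hyperbolicity does the work: it keeps all critical chords inside Fatou gaps, so no critical point sits at $c$, and it forces $X$ to be either an honest Fatou gap or a genuine non-critical rotational polygon rather than a pathological limit. Once $c$ and $X$ are secured, the classification above is routine.
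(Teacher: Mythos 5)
Your classification step (once an interior fixed point $c$ is in hand, the invariant object $X$ containing it is either an infinite invariant gap, giving conclusion (1), or a finite invariant gap with no fixed vertex, giving conclusion (2)) is sound and essentially matches how the paper closes its Case II. The genuine gap is in your existence step: you ``select the branch $\tau$ of $\sigma_d^{-1}$ that maps $\overline{F}$ into $\Int F$ and fixes no boundary fixed point of $F$'' by fiat, and such a branch need not exist. Branches of the inverse (Definition \ref{def: branches of the inverse}) are attached to the critical sectors of a critical portrait compatible with $\mathcal{L}$; each branch carries $\overline{\mathbb{D}}$ onto the closure of one critical sector, whose circular arcs have total length exactly $\frac{1}{d}$, and $\tau$ fixes exactly those fixed points of $\sigma_d$ lying in the closure of those arcs. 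A fixed sector of degree $n$ carries $n$ fixed points of $\sigma_d$ on its circular boundary, not just ``the two endpoints of $F$'' (see the central sector on the left of Figure \ref{fig:Cases}); and since the criticality of $\mathcal{L}$ is dictated by where its Fatou gaps sit, nothing prevents a configuration in which \emph{every} critical sector inside $F$ has a fixed point in the closure of its arcs --- for instance a degree-$4$ sector whose two extreme sectors absorb the endpoint fixed points while the two middle sectors each straddle an interior fixed point. In that situation every available $\tau$ fixes a point of $\partial F\cap\mathbb{S}$, Brouwer applied to $\tau$ may return exactly that boundary point, and your construction produces nothing in $\Int F$. (You also never verify that $\tau$ extends continuously over all of $\overline{F}$ past the critical values; hyperbolicity helps, but it is not automatic.)

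The paper sidesteps this obstruction by a trichotomy on subtension rather than a choice of inverse branch. When fixed objects of $F$ are \emph{not} subtended by leaves of $\mathcal{L}$ (its Cases I and III), no fixed-point theorem is invoked at all: those fixed objects must lie on the boundary of a single invariant gap $G$, which is then shown to be Fatou and critical (citing \cite{Blokh:2006}), and the center of $G$ is fixed by the very construction of $\sigma_d^{\#}$. Brouwer enters only in the all-subtended Case II, and there it is applied to the \emph{forward} map, $r\circ\sigma_d^{\#}$, on a subdisk $\widetilde{\mathbb{D}}\subset F$ cut off by leaves repelled from the fixed objects; by construction $\widetilde{\mathbb{D}}$ contains no fixed point of $\sigma_d$ on $\mathbb{S}$, so the Brouwer point is forced into the interior --- precisely the guarantee your single-branch argument cannot supply. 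Two repairs if you want to salvage your route: (i) replace the branch selection by the paper's repelled-leaf subdisk, or by a case split in which unsubtended fixed objects are handled directly via the invariant gap they bound; and (ii) fix the justification that $X$ has no fixed vertex: the vertices of $X$ lie on $\mathbb{S}\subset\partial F$ in any case, so the correct argument is that a cyclic-order-preserving permutation of the vertex set of an invariant finite gap with one fixed vertex is the identity, which would make $X$ a fixed polygon of the FPP and contradict $c\in\Int F$.
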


\begin{proof}
Let $F$ be a fixed sector of $\mathcal{P}$. The proof is done is three cases; the first and third case are of Type (1). The cases divide on whether leaves of the lamination subtend fixed points or fixed leaves of the fixed point portrait.
    
    \noindent Case I: No fixed point or fixed leaf of $F$ is subtended by the leaves of the lamination.  The left-most lamination in Figure \ref{fig:Cases} is a $\sigma_5$ example: there are two fixed leaves and three canonical invariant gaps. $F$ is the central fixed sector.\\
    
    \noindent Since no fixed point or fixed leaf is subtended by leaves of the lamination, the fixed points and leaves must be on the boundary of a gap, $G$, in $F$. Since the fixed points and leaves in the boundary of $G$ are disjoint, $G$ is a Fatou gap. Since $G$ is an invariant Fatou gap, there are points $x$ and $y$ that map to the same point \cite{Blokh:2006}. Thus, $G$ is critical. Since the lamination is hyperbolic, the criticality is interior to the Fatou gap. As the extension $\sigma_{d}^{\#}$ \cite{Adam:2023} of $\sigma_d$ to the plane is defined, the center of $G$ is the unique fixed point in $G$. In this case, the fixed point is of Type (1).\\

    % \noindent Uniqueness: Since every fixed object is on the boundary of an invariant Fatou gap, the lamination in $F$ is canonical as in \ref{clfpp}. Therefore, sector $F$ contains a unique fixed point.\\
    
      \noindent Case II: Every fixed object of $F$ is subtended by leaves of the lamination. The middle lamination of Figure \ref{fig:Cases} is a $\sigma_4$ example: there is one fixed leaf and the degree three fixed sector $F$ contains a degree 3 rotational triangle.  The fixed leaf and the fixed point in the degree three fixed sector are subtended by pullbacks of the triangle.\\

    \noindent For each fixed object, we pick a sufficiently close leaf $l$ of the lamination $\mathcal{L}$ so that $l$ is repelled from the fixed object, but the image of $l$ remains in the fixed sector $F$. Let $\widetilde{\mathbb D}\subset F$ be the disk bounded by our chosen leaves, the intervals of $\mathbb{S}$, and the endpoints of adjacent leaves in $F$. We retract every point of $\mathbb{D} \setminus \widetilde{\mathbb{D}}$ to the nearest point of the boundary of $\widetilde{\mathbb D}$ by a retraction mapping $r$. $\widetilde{\mathbb D}$ does not contain any fixed point of $\sigma_d$ on $\mathbb{S}$. We have, $r(\sigma_d^{\#}(\widetilde{\mathbb D}))\subset \widetilde{\mathbb D}$. Since $r \circ \sigma_d^{\#}$ is continuous, Brouwer's fixed point theorem shows at least one fixed point exists in $\widetilde{\mathbb D}$. However, these fixed points can not occur on $\partial \widetilde{\mathbb D}$, so they must occur inside $F$ as $\widetilde{\mathbb D}\subset F$.
    
    In a hyperbolic lamination, fixed points are either the center of an invariant Fatou gap or an invariant equivalence class. In this case, we do not have an invariant Fatou gap. Therefore, the fixed point must occur in an invariant finite gap. These are rotational polygons.\\

    % \noindent Uniqueness: In a lamination, each invariant object, not a point on $\mathbb{S}$, is either a leaf or a gap. Also, if it is a gap, then either it is a polygon or a critical Fatou gap in the hyperbolic case. In this case, all of the fixed objects are subtended by the leaves of the lamination. So, there is no invariant Fatou gap. Thus, the fixed objects are either in a leaf or in a polygon. Now, if we have two or more rotational objects, a combination of leaf and/or polygon, they must be separated by some critical sector of $\mathcal{L}$ with rotation number zero by Theorem \ref{no adj rot}. {\huge ***a bit awkward since the theorem comes later in the paper***}  Therefore, they do not lie on a single fixed sector $F$. Hence, the fixed object in $F$ is unique. ***I question the validity of this proof --Brittany\\ 

    \noindent Case III: Some fixed objects of $F$ are subtended by leaves of the lamination, and some are not.  The rightmost lamination in  Figure \ref{fig:Cases} is a $\sigma_4$ example: there is one fixed leaf and in the degree three fixed sector, one critical chord touches a fixed point, but the other touches a period two leaf in $F$. \\
    
    \noindent As in Case I, the fixed objects not subtended by the leaves of the lamination are on the boundary of an invariant critical gap.  As we have defined $\sigma_d^{\#}$, the center of this invariant critical Fatou gap is fixed.

    %  \noindent Uniqueness: Suppose there are two invariant Fatou gaps $G$ and $H$ in $F$. Since the gaps are invariant, they have to have fixed leaves or fixed points in their boundary. If the two gaps share a common fixed leaf then it contradicts the fact that they are both in the same fixed sector $F$. If they share a common fixed point, the point is either a vertex of a fixed region between the two gaps, which again contradicts the fact that they are in the same fixed sector $F$, or the fixed point is just a point then $G$ and $H$ are the same invariant Fatou gap. Thus, we have only one invariant Fatou gap $G$.\\

    % \noindent If there were another fixed object, it is either a rotational polygon or a leaf. But it can not be on the boundary of $G$, because $G$ is invariant. Then it has to be in a different critical sector other than $G$. So, there is another critical gap say $G^ {'}$ between $G$ and the rotational object that shares boundaries with the rotational object and with the invariant gap $G$. Since it shares a boundary with the rotational object, it is not invariant. Again since it shares a boundary with $G$, it contradicts the invariance of $G$. Therefore, there can not be another fixed object other than $G$.\\

    %\noindent We have the fixed leaves of the fixed point portraits and their pullbacks are in the lamination, and they converge only to points of $\mathbb{S}$ by Theorem \ref{CLP}. Therefore, fixed leaves, their pullbacks and $\mathbb{S}$ are a closed sub-lamination of $\mathcal{L}$.
\end{proof}

\begin{ques}
    With reference to Theorem \ref{IFP}, we raise the question: Under what conditions is the FPP, $\mathcal{P}$, a sub-lamination of $\mathcal{L}$? 
\end{ques}

\section{Unicritical and Maximally Critical Correspondence}

\begin{defn}[Terminal and Non-Terminal Critical Sectors]
    We will say a critical sector is {\em terminal} if is formed by a singular critical chord of length $\dfrac{1}{d}$. Otherwise a critical sector is formed by two or more critical chords and will be called {\em non-terminal}.
\end{defn}

\begin{defn}[Unicritical Lamination] \label{unicritical}
A $d$-invariant lamination which is compatible with an all critical $d$-gon is called a {\em unicritical lamination}. We also refer to this as a {\em globally unicritical lamination}.
\end{defn}

\begin{defn}[Maximally Critical Lamination]
A $d$-invariant lamination which has a polygon with $d-1$ degree 2 Fatou gaps on sides of the polygon in separate orbits is called a {\em maximally critical lamination}. 
\end{defn}

\begin{defn}[Locally Unicritical Lamination] \label{locunicritical}
    A $d$-invariant lamination with a rotational polygon which is in a terminal critical sector of an all critical $k$-gon where $k<d$ is called a {\em locally unicritical lamination}.
\end{defn}

\begin{defn}[Locally Maximally Critical Lamination]
  A $d$-invariant lamination which has a rotational polygon with $k-1$ degree 2 Fatou gaps in separate orbits where $k<d$ is called a {\em locally maximally critical lamination}.  
\end{defn}

\begin{rem}
    We will refer to our local laminations as {\em canonical} if the critical portrait intersects the vertices of the polygon when possible. See Figure \ref{fig:fixed flowers} for examples of both canonical locally unicritical and locally maximally critical laminations.
\end{rem}

\begin{lem}
The length of the major of a unicritical polygon is within $\dfrac{1}{d(d+1)}$ of critical length $\left(\dfrac{1}{d}\right)$.
\end{lem}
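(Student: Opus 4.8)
The plan is to reduce the claim to a computation about the arc-lengths of the sides of the rotational polygon and the way those lengths are expanded by $\sigma_d$. Write the vertices of the polygon $P$ in circular order as $x_1 < x_2 < \dots < x_q$ with rotation number $\dfrac{p}{q}$ in lowest terms ($p\ge 1$, $q\ge 2$), and let $g_i$ be the counterclockwise gap from $x_i$ to $x_{i+1}$ (indices mod $q$), so $\sum_i g_i = 1$. Since $\sigma_d$ preserves circular order on $P$ and is a bijection of the vertex set, it carries consecutive vertices to consecutive vertices; hence the gaps are cyclically permuted, and each gap runs through a single $q$-cycle $g_{s_0}\to g_{s_1}\to\dots\to g_{s_{q-1}}\to g_{s_0}$. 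Locally $\sigma_d$ multiplies arc-length by $d$, so a gap $g$ and its image $g'$ satisfy $g' = dg - c$ for a nonnegative integer $c$ recording how many full turns the image arc makes. Summing the relations $c = dg - g'$ around the cycle and using that the gaps are permuted gives $\sum_i c_i = d\cdot 1 - 1 = d-1$.

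The key structural step is to show that in a unicritical lamination all of this wrapping is concentrated behind the single longest side. By Definition \ref{unicritical} the lamination is compatible with an all-critical $d$-gon $Q$; its vertices are the $d$ evenly spaced preimages of one point of $\mathbb S$, and its sides are critical chords. Because $Q$ is a connected polygon whose boundary meets $\partial P$ only at vertices, $Q$ lies in the closure of one complementary gap of $P$, and since $P$ is convex that gap is bounded by a single side, which is the major $M$. Thus the arc behind $M$ carries the entire criticality, forcing $c_{s_0}=d-1$ for $M=s_0$ and $c_{s_j}=0$ for $j\ne 0$. I expect this concentration claim to be the main obstacle: it requires arguing carefully that in the unicritical case the all-critical $d$-gon cannot be distributed among several gaps of $P$, and that the unique gap containing it is the one behind the longest side.

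With the wrapping numbers pinned down, the recurrence becomes explicit: $g_{s_1}=d\,g_{s_0}-(d-1)$ and $g_{s_{j+1}}=d\,g_{s_j}$ for $1\le j\le q-1$. Iterating around the cycle and imposing $g_{s_q}=g_{s_0}$ gives $g_{s_1}=\dfrac{d-1}{d^{q}-1}$ and the largest gap $g_{s_0}=\dfrac{d^{q-1}(d-1)}{d^{q}-1}$. A short check shows $g_{s_0}>\tfrac12$ for all $d\ge 2,\ q\ge 2$, so the length of the major is the shorter complementary arc, namely $\operatorname{len}(M)=1-g_{s_0}=\dfrac{d^{q-1}-1}{d^{q}-1}$.

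Finally I would compare with the critical length $\dfrac1d$. Direct subtraction gives $\dfrac1d-\operatorname{len}(M)=\dfrac{d-1}{d\,(d^{q}-1)}>0$, so the major is always slightly shorter than critical length. Since $q\ge 2$ forces $d^{q}-1\ge d^{2}-1=(d-1)(d+1)$, we obtain $\dfrac{d-1}{d\,(d^{q}-1)}\le\dfrac{1}{d\,(d+1)}$, which is exactly the asserted bound; equality holds precisely when $q=2$, i.e.\ for the rotational leaf of rotation number $\tfrac12$, so the estimate is sharp.
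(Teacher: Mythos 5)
Your argument is correct, but it takes a genuinely different route from the paper's, which disposes of the lemma in two lines: by \cite{Cosper:2016}, every leaf of a sibling $d$-invariant lamination has an image of length at least $\frac{1}{d+1}$, so the major, being the leaf of the orbit closest to critical length, satisfies $\frac{1}{d+1}\le \operatorname{len}(M)\le\frac{1}{d}$, and $\frac{1}{d}-\frac{1}{d+1}=\frac{1}{d(d+1)}$. You instead prove the bound from scratch by solving the length dynamics of the rotational polygon: cyclic permutation of the complementary arcs, wrapping numbers summing to $d-1$, concentration of all wrapping behind one side, and the resulting geometric recurrence. What your route buys is strictly more information: the exact value $\operatorname{len}(M)=\frac{d^{q-1}-1}{d^{q}-1}$, the exact deficit $\frac{1}{d}-\operatorname{len}(M)=\frac{d-1}{d(d^{q}-1)}$, and sharpness precisely at $q=2$ --- none of which follows from the paper's citation. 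What the paper's route buys is brevity and generality: it applies verbatim to the major of any periodic leaf orbit in a unicritical lamination, with no case analysis of where the all-critical $d$-gon sits and no assumption that the orbit consists of the sides of a single rotational polygon.

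Two details in your write-up deserve tightening, both fillable. First, you tacitly assume the vertex set of $P$ is a single orbit of length $q$, so that the gaps form one $q$-cycle; a rotational set may a priori be a union of several period-$q$ orbits, in which case the gaps split into several cycles. Your own bookkeeping closes this: any gap-cycle avoiding the distinguished gap would have all wrapping numbers $0$, forcing $g=d^{k}g$ around that cycle, which is impossible, so in the unicritical case only one cycle (hence one orbit) can occur. Second, the concentration step is cleanest done metrically rather than by appealing to ``the longest side'': once your convexity argument places $Q$ in the closure of one complementary component of $P$, the $d$ vertices of $Q$, pairwise at least $\frac{1}{d}$ apart, force that component's arc to have length at least $\frac{d-1}{d}$; the remaining gaps then sum to at most $\frac{1}{d}$, so each satisfies $dg_j<1$, giving $c_j=0$, and $\sum c_i=d-1$ puts all the wrapping at $s_0$. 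That this side is in fact the longest --- hence the major in the sense of Definition \ref{majmin} --- is then a conclusion of your computation, not a hypothesis you need up front.
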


\begin{proof}
    We know the length of any leaf in a lamination must get at least as long as $\dfrac{1}{d+1}$ (\cite{Cosper:2016}). Since the critical length for the critical chord associated with a unicritical major is $\dfrac{1}{d}$, our major leaf must be within $\dfrac{1}{d}-\dfrac{1}{d+1}=\dfrac{1}{d(d+1)}$ of critical length.
\end{proof}

The following definition and theorem are adapted from \cite{Schleicher:1999}.

\begin{defn}[Co-root]
A {\em co-root} is a point, other than an endpoint of the major, in the boundary of the central gap of the unicritical lamination that is fixed under the first return map.
\end{defn}

\begin{thm}[Co-root Theorem]\label{thm: co-root}
Given a degree $d$ unicritical lamination, there will be $d-2$ co-roots compatible with our unicritical polygon. The distance between co-roots is greater than $1/d$.
\end{thm}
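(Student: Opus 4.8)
The plan is to identify the co-roots with the non-root fixed points of the first return map on the boundary of the central Fatou gap, to count them against the fixed points of $\sigma_d$, and to read off the spacing from the even distribution of those fixed points.

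First I would fix the central gap. By Definition \ref{unicritical} the critical set of a degree $d$ unicritical lamination is a single all critical $d$-gon, so the central gap $U$ bounded by the major $M$ is a critical Fatou gap of full local degree $d$. Letting $q$ be the period of $U$, the first return map $g:=\sigma_d^{\,q}|_{\partial U}$ is a covering of the boundary circle of $U$ of degree $d$, and it is order preserving by the Order Preserving proposition. A degree $d$ orientation-preserving circle covering is topologically conjugate to $\sigma_d\colon\ucirc\to\ucirc$; since $\sigma_d$ has exactly $d-1$ fixed points (Proposition \ref{fxd}), the return map $g$ has exactly $d-1$ fixed points on $\partial U$.

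Next I would account for the major and count. The major $M$ is invariant under $g$ (it is the root leaf of $U$), so under the conjugacy to $\sigma_d$ the leaf $M$ collapses to a single $g$-fixed point, the \emph{root}, consuming exactly one of the $d-1$ fixed points. The remaining $d-2$ fixed points are genuine points of $\partial U\cap\ucirc$, fixed by $g$, and distinct from the endpoints of $M$; by the definition of co-root these are precisely the co-roots. Hence there are exactly $d-2$ co-roots, which is the first assertion.

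Finally, for the distance bound I would transport the spacing of the fixed points of $\sigma_d$. Under the conjugacy the $d-1$ fixed points of $g$ are the images of the evenly spaced fixed points $\tfrac{i}{d-1}$, $0\le i<d-1$, of $\sigma_d$, so in the boundary-circle coordinate of $U$ consecutive co-roots are separated by $\tfrac1{d-1}>\tfrac1d$, giving the bound immediately in that coordinate. The delicate point---and what I expect to be the main obstacle---is to upgrade this to the stated bound for the \emph{ambient} arc length on $\ucirc$, since the semiconjugacy collapsing the holes subtended by the boundary leaves of $U$ does not preserve arc length. To control this I would argue that each co-root lies strictly interior to a distinct critical sector of the all critical $d$-gon, whose boundary arc has length exactly $\tfrac1d$, and that passing from one co-root to the next forces one to traverse a full critical chord together with a positive-length arc (for instance along a pre-image of the major, whose length exceeds $\tfrac1{d+1}$ by the preceding lemma). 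Verifying that exactly one such critical sector separates consecutive co-roots, and that the additional contribution is genuinely positive, is the crux that turns the combinatorial spacing $\tfrac1{d-1}$ into the strict ambient bound $>\tfrac1d$.
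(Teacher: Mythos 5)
You should know at the outset that the paper does not prove this theorem at all: it is explicitly ``adapted from \cite{Schleicher:1999}'', and the subsequent remark points to the Generalized Lavaurs Algorithm of \cite{Bhattacharya:2021} for the existence of co-roots. So your attempt must stand on its own, and it has two genuine gaps. The first is in the counting step: your claim that a degree $d$ orientation-preserving covering of the circle is topologically conjugate to $\sigma_d$ is false in general --- a lift $F$ with $F(x+1)=F(x)+d$ can have $F(x)-x$ oscillating, producing more than $d-1$ fixed points, or attracting fixed points, neither of which occurs for $\sigma_d$. What is actually available is the standard \emph{monotone semiconjugacy}: collapsing the boundary leaves of the central gap $U$ gives a monotone map $\varphi:\partial U\cap\ucirc\to\ucirc$ semiconjugating the first-return map to $\sigma_d$ (in the spirit of \cite{mimbs:2013}). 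Under a semiconjugacy, each of the $d-1$ fixed points of $\sigma_d$ (Proposition \ref{fxd}) pulls back either to a fixed point of $\partial U\cap\ucirc$ or to an \emph{invariant edge} of $U$. The major accounts for one invariant edge, but you must still rule out that any of the remaining $d-2$ fixed classes is an edge rather than a point; this is precisely what separates the unicritical lamination of Definition \ref{unicritical} from the maximally critical one, where all $d-1$ fixed classes are edges (the $d-1$ majors). Your proposal asserts ``the remaining $d-2$ fixed points are genuine points'' without argument, which is the substantive content of the count.

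The second gap you flag yourself, and flagging it does not close it: the spacing $\tfrac{1}{d-1}$ in the collapsed coordinate transfers no information to ambient arc length, since $\varphi$ distorts length arbitrarily. Your proposed repair --- one critical sector of the all-critical $d$-gon, of arc length exactly $\tfrac1d$, separating consecutive co-roots --- is not yet an argument: two points lying in adjacent sectors can be arbitrarily close to the shared vertex of the $d$-gon, so placing each co-root in its own sector does not by itself force distance greater than $\tfrac1d$. You would additionally need a quantitative statement keeping co-roots a definite distance from the vertices of the critical $d$-gon (for instance, via the dynamics of the return map near those vertices, or via the explicit location of co-roots as points of the form $\tfrac{i}{d^{q}-1}$ produced by the Lavaurs-type algorithm of \cite{Bhattacharya:2021}), and that is exactly the step you leave unverified. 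As it stands, the first assertion of the theorem is proved only modulo the edge-versus-point dichotomy above, and the second assertion is not proved.
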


The following algorithm gives rise to the existence of co-roots for all unicritical laminations. 

\begin{rem}[The Generalized Lavaur's Algorithm, Section 6 of \cite{Bhattacharya:2021}]\label{thm: lavaurs} 
For all unicritical laminations of any degree, we can uniquely find the corresponding minor leaf that generates the lamination. In this process of finding these minors, the algorithm \enquote{skips} over certain points of every period. These points that are skipped over are the desired co-roots. However, the Lavaur's Algorithm identifies the minor leaf rather than the major. Thus, the \enquote{co-roots} it identifies are actually the images of the points described in the co-root definition. These points are in the same orbit so we will use the term interchangeably when no confusion will arise. We will identity a co-root with the leaf with which it is associated  in its orbit. 
\end{rem}

\begin{thm}[Uniqueness of Rotation Numbers in a Critical Sector]\label{unique rotation number}
    Each critical sector has a unique rotation number. In other words, a non-zero rotation set and a fixed point cannot exist in the same critical sector. \cite{Jamie:2003}
\end{thm}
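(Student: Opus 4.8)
The plan is to reduce the statement to the classical theory of Poincar\'e rotation numbers for monotone degree-one circle maps. By the definition of a critical sector, $\partial C$ maps onto $\mathbb{S}$ with degree $1$, and by the order-preserving property (\cite{mimbs:2013}) $\sigma_d$ preserves circular order on the gaps meeting $C$; the associated inverse branch $\tau$ of Definition~\ref{def: branches of the inverse} is one-to-one and order-preserving onto $\partial S$. First I would collapse each critical chord appearing in $\partial C$ to a single point, obtaining a quotient circle $\mathbb{S}_C$. Since each critical chord has both endpoints sent by $\sigma_d$ to one point, and $\partial C$ already covers $\mathbb{S}$ exactly once, the map $\sigma_d$ descends to a continuous, weakly monotone (orientation-preserving) self-map $g\colon \mathbb{S}_C\to\mathbb{S}_C$ of degree one.

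Next I would invoke the classical fact that a continuous orientation-preserving degree-one circle map that is monotone has a well-defined Poincar\'e rotation number $\rho(g)$, that this number is independent of the chosen orbit, and that for such a genuinely monotone (not merely degree-one) map the rotation set is the single value $\rho(g)$; in particular every periodic orbit of $g$ has rotation number exactly $\rho(g)$. The point is that monotonicity rules out a nondegenerate rotation interval, so two distinct rational rotation numbers cannot be realized simultaneously.

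It then remains to translate the two putative fixed objects into periodic data for $g$ and read off their rotation numbers. A point fixed by $\sigma_d$ and lying in $C$ projects to a fixed point of $g$, which forces $\rho(g)=0$. A rotational set $P\subset C$ with rotation number $\tfrac{p}{q}$ projects to a periodic orbit of $g$ whose combinatorial rotation number---computed from the same index data $j-i(j)\pmod k$ used in the Rotational Set definition---equals $\tfrac{p}{q}$, so that $\rho(g)=\tfrac{p}{q}$. If a nonzero rotational set and a fixed point both lay in the single critical sector $C$, we would obtain $0=\rho(g)=\tfrac{p}{q}\neq 0$, a contradiction. The same argument shows more: any two rotational objects sharing a critical sector must carry the common value $\rho(g)$, so the rotation number is an invariant of the sector.

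The main obstacle is making the collapse rigorous and verifying that it transports rotation numbers correctly. Concretely, I must check the behavior at the endpoints of the critical chords and at the images of the critical points---where $\tau$ is not defined---so that $g$ really is continuous and of degree one, and I must confirm that the combinatorial rotation number of $\sigma_d$ on $P$ agrees with the Poincar\'e rotation number of $g$. I must also ensure that a rotational set said to lie ``in'' $C$ has its entire $\sigma_d$-orbit inside $C$, so that $g$ faithfully records the dynamics; this uses the degree-one (hence injective on the sector) behavior of $\sigma_d$, which guarantees that the first-return map to $C$ is just $\sigma_d$ itself.
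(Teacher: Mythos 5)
The paper offers no proof of this theorem at all: it is imported verbatim from Malaugh's dissertation \cite{Jamie:2003}, so there is no in-paper argument to compare against. Your strategy is essentially the standard one in that literature (see also \cite{Blokh:2006}): replace the sector dynamics by a monotone degree-one circle map and invoke uniqueness of the Poincar\'e rotation number, so that a fixed point (rotation number $0$) and a nonzero rotational set (rotation number $p/q\neq 0$) cannot coexist. That is the right reduction, and your translation steps (periodic orbits of a monotone degree-one map all carry the single rotation number; the combinatorial rotation number of a rotational set agrees with the Poincar\'e number of its projected orbit; invariance of the rotational set keeps its whole orbit in the sector) are all correct.

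One construction detail, which you yourself flagged as the main obstacle, does not parse as written: $\sigma_d$ maps $\partial C$ onto all of $\mathbb{S}$, not onto $\partial C$, so collapsing only the critical chords of $\partial C$ produces a quotient circle $\mathbb{S}_C$ on which $\sigma_d$ induces a \emph{homeomorphism} $\mathbb{S}_C\to\mathbb{S}$, not a self-map. To close the loop you must also collapse the arcs of $\mathbb{S}$ complementary to $\partial C\cap\mathbb{S}$ (the arcs subtended by the critical chords), i.e.\ compose with a monotone retraction $R:\mathbb{S}\to\partial C$. Cleaner still, skip the quotient entirely: set $F:=\sigma_d\circ R$, which is a continuous, monotone, degree-one self-map of $\mathbb{S}$, equal to $\sigma_d$ on $\partial C\cap\mathbb{S}$ and constant on each complementary arc (well defined because each critical chord has both endpoints identified by $\sigma_d$). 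Every $\sigma_d$-invariant set contained in the sector's arcs is then $F$-invariant with identical combinatorics --- no two points of a rotational set can be identified by $R$, since $\sigma_d$ restricts to a bijection on such a set --- and your rotation-number contradiction goes through verbatim. So: correct approach, matching the cited sources, with one fixable slip in the descent step.
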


% ***add case of all critical polygon for adj
\begin{thm}\label{no adj rot}
    There cannot be rotational sets in two adjacent critical sectors.
\end{thm}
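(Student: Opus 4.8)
The plan is to argue by contradiction. Suppose two adjacent critical sectors $S_1$ and $S_2$ each contain a rotational set, say $P_1$ and $P_2$, with nonzero rotation numbers $p_1/q_1$ and $p_2/q_2$. Being adjacent, $S_1$ and $S_2$ are separated by a common critical chord $c=\overline{ab}$ (or by a common all-critical polygon, in which case I take $a,b$ to be the two of its vertices that abut $S_1$ and $S_2$ respectively). Since $c$ is critical, its endpoints share a single image, the critical value $v:=\sigma_d(a)=\sigma_d(b)$. Write $\alpha$ for the closed arc of $\ucirc$ on the $S_1$-side of $c$ and $\beta$ for the arc on the $S_2$-side, so that $\alpha\cup\beta=\ucirc$ and $\alpha\cap\beta=\{a,b\}$. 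Because each rotational set sits on the circle boundary of its own sector, we have $P_1\subset\alpha$ and $P_2\subset\beta$.

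The heart of the argument is to locate the critical value $v$ relative to each rotational set. By Definition \ref{def: branches of the inverse}, the branch of the inverse associated with $S_1$ makes $\sigma_d$ an order-preserving bijection of the circle-arcs $I_{S_1}=(\partial S_1)\cap\ucirc$ onto $\ucirc$; the internal critical chords of $S_1$ only collapse the gaps between these arcs (their two endpoints share an image), so in the induced ``seam'' order the two endpoints $a$ and $b$ of $c$ are precisely the first and last points, both mapping to $v$. Ordering $P_1$ along $\alpha$ as $x_1<\dots<x_{q_1}$ (indices mod $q_1$), it follows that $\sigma_d(x_1)=x_{1+p_1}$ is the image lying immediately counterclockwise of $v$, while $\sigma_d(x_{q_1})=x_{p_1}$ is the image lying immediately clockwise of $v$. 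Hence $v$ lies in the complementary arc $(x_{p_1},x_{1+p_1})$ of $P_1$. Since $0<p_1<q_1$, this is a bounded gap between two consecutive vertices both interior to $\alpha$ — not the major gap that wraps through $\beta$ — so $v\in\alpha^\circ$. Running the identical computation for $P_2$ in $S_2$ gives $v\in\beta^\circ$. As $\alpha^\circ\cap\beta^\circ=\emptyset$, this is the desired contradiction.

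It remains to dispose of the degenerate possibility that $c$ actually touches one of the polygons, i.e. that $a$ or $b$ is a vertex of $P_1$ or $P_2$, in which case the gap containing $v$ degenerates. If $a=x_1\in P_1$ then $v=\sigma_d(a)=x_{1+p_1}\in P_1$, so $v$ is itself a vertex of $P_1\subset\alpha$; combined with the symmetric analysis of $P_2$ this again forces $v\in\alpha\cap\beta=\{a,b\}$, whence $v=\sigma_d(a)=a$ or $v=b$ would be fixed — impossible, since a rotational set of nonzero rotation number contains no fixed point. Thus every branch of the argument terminates in a contradiction.

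I expect the main obstacle to be the bookkeeping that justifies the ``seam'' picture in full generality: for a non-terminal sector $I_{S_i}$ is a union of several arcs, and one must check that the shared chord's endpoints $a,b$ (rather than the endpoints of the sector's other critical chords) are exactly the two extreme points of the seam order, and that the consecutive pair $x_{p_i},x_{1+p_i}$ flanking $v$ is not the major pair $x_{q_i},x_1$. Both reduce to the order-preservation guaranteed by the branch of the inverse together with $0<p_i<q_i$, but stating them cleanly — and treating the all-critical-polygon form of adjacency uniformly with the single-chord form — is where the care is needed. One could alternatively phrase the entire contradiction as follows: the shared critical chord $c$ must lie in the major gap of each polygon, yet the two major gaps open in opposite directions across $c$, which is exactly the asymmetry that the location of $v$ detects.
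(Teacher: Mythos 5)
Your argument is correct, but it takes a genuinely different route from the paper's. The paper proves Theorem \ref{no adj rot} by a pigeonhole count on the $d-1$ equally spaced fixed points of $\sigma_d$, split into cases (two terminal sectors, mixed, two non-terminal): either the two adjacent sectors span an arc of length $2/d$, which exceeds the fixed-point spacing $1/(d-1)$ and so traps a fixed point, or the remaining sectors can absorb at most $d-2$ fixed points, forcing one into a rotational sector; either way this contradicts the uniqueness of the rotation number within a critical sector (Theorem \ref{unique rotation number}, imported from \cite{Jamie:2003}). You instead locate the common critical value $v$ directly: the order-preserving branch of the inverse (Definition \ref{def: branches of the inverse}) forces $v$ into the gap $(x_{p_i}, x_{1+p_i})$ of each rotational set, and $0<p_i<q_i$ makes this a gap lying strictly on that set's side of the shared chord, so $v$ would lie strictly on both sides at once. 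Your route buys uniformity --- no terminal/non-terminal case split (the paper's counting in the non-terminal cases is only sketched) --- and independence from Theorem \ref{unique rotation number}, the relevant instance of which it effectively reproves, since a fixed point is the degenerate rotational set whose only gap is the whole complementary arc, and your computation shows the critical value of a compatible chord always falls in the ``rotated'' gap; the paper's count, in exchange, is shorter given its imported lemma and ties the statement to the fixed-point bookkeeping of Subsection \ref{globalcount}. The caveats you flag at the end are real but resolve exactly as you indicate: the degree-one boundary condition both makes $a,b$ the extremes of the seam order and rules out a second boundary chord of $S_i$ sharing the critical value $v$ (so $v$ is interior to the gap); in the all-critical-polygon form of adjacency your identity $\alpha\cup\beta=\ucirc$ fails when $S_1$ and $S_2$ share only a vertex, but only disjointness of the interiors of $\alpha$ and $\beta$ is actually used, so the contradiction survives; and the touching case dies, as you say, because a rotational set of nonzero rotation number contains no fixed point. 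Note finally that, like the paper's own proof, you implicitly read ``rotational set'' as one with nonzero rotation number, which is the intended hypothesis here.
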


\begin{proof}
    Assume we have two adjacent critical sectors, $S_1$ and $S_2$, both with a rotational set within it. We have three cases: two terminal adjacent sectors, one terminal and one non-terminal adjacent sectors, and two non-terminal adjacent sectors.

    We observe that this result follows immediately in the easiest case that two critical sectors form a singular arc of the circle. The sectors are length $\dfrac{1}{d}$; the arc they form together is length $\dfrac{2}{d}$. We will have a fixed point in that arc as the fixed points are distributed $\dfrac{1}{d-1}<\dfrac{2}{d}$ apart contradicting Theorem \ref{unique rotation number}.

    In the mixed case, we have two sub-cases: one where the non-terminal critical sector has two arcs on the circle and the other where it has three or more. When the non-terminal sector has two arcs on the circle and is adjacent to a terminal critical sector, its longest critical chord will be of length $\dfrac{2}{d}$. Thus, by the previous argument, we will be able to fit a fixed point underneath this non-simple sector again contradicting Theorem \ref{unique rotation number}. Now we consider the case when the non-terminal sector has three or more arcs on the circle. Our two rotational sectors take up $\dfrac{2}{d}$ of the circle. In the remaining portion of the circle, we can fit up to $d-2$ fixed points in the other critical sectors. This leaves one fixed point left to place. Since our other critical sectors already have as many fixed points as allowed, this forces a fixed point in one of our rotational sectors contradicting Theorem \ref{unique rotation number}.

    The final case of two non-terminal adjacent sectors is the same as the previous sub-case.

\end{proof}

The following corollary follows immediately from Theorem \ref{no adj rot}.

\begin{cor}

A unicritical rotational polygon has all fixed points subtended by one of its sides.

\end{cor}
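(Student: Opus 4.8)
The plan is to confine the rotational polygon $P$ to a single critical sector of the all-critical $d$-gon and then use Theorem~\ref{unique rotation number} to evict every fixed point from that sector, leaving all $d-1$ fixed points of $\sigma_d$ in the arc subtended by the major side. First I would record the ambient data: since the lamination is unicritical, its critical portrait is an all-critical $d$-gon whose $d$ ears are terminal critical sectors, each meeting $\mathbb{S}$ in an arc of length $1/d$ and together covering $\mathbb{S}$. A length count on the gaps of $P$ shows that in the unicritical case all of the expansion of $\sigma_d$ ``wraps'' through a single major gap, so the major side $M$ (Definition~\ref{majmin}) subtends an arc of length at least $(d-1)/d$, while every vertex of $P$ other than the two endpoints of $M$ lies in the complementary arc of length strictly less than $1/d$.

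The key step is where Theorem~\ref{no adj rot} enters. The vertices of $P$ that are not endpoints of $M$ span an arc of width less than $1/d$, the width of a single critical sector, so a priori $P$ could meet at most two adjacent critical sectors; but Theorem~\ref{no adj rot} forbids a rotational set from occupying two adjacent critical sectors, so in fact all vertices of $P$ lie in one critical sector $S$. By Theorem~\ref{unique rotation number}, $S$ carries the nonzero rotation number $\rho(P)$ and therefore contains no fixed point of $\sigma_d$.

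Finally, since $S$ contains every vertex of $P$ but no fixed point, each of the $d-1$ fixed points lies outside $S$, and the complement of $S$ is contained in the arc subtended by $M$; hence all $d-1$ fixed points are subtended by the single side $M$, as claimed. I expect the one genuine obstacle to be the length estimate pinning the non-major gap below $1/d$ --- equivalently, showing that a unicritical polygon has a unique major side with all remaining vertices clustered opposite it. Once that estimate is secured, Theorems~\ref{no adj rot} and~\ref{unique rotation number} close the argument immediately, which is why the statement is phrased as an immediate corollary of Theorem~\ref{no adj rot}.
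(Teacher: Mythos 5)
Your overall architecture --- confine the polygon $P$ to (the closure of) a single critical sector of the all-critical $d$-gon, use Theorem \ref{unique rotation number} to evict fixed points from that sector, and observe that the complementary arc, of length at least $(d-1)/d$, lies in the gap across the single spanning side of $P$ --- is the right mechanism, and your endgame is sound. The gap is in how you obtain the confinement, which is precisely the step you flag as ``the one genuine obstacle,'' and neither of your two tools closes it. First, the ``length count on the gaps of $P$'' cannot stand alone: for $\sigma_3$ the orbit $\{1/13, 3/13, 9/13\}$ is a rotational triangle with rotation number $1/3$ whose vertices fit in no arc of length $1/3$, and whose fixed points $0$ and $1/2$ are subtended by two \emph{different} sides; so the clustering claim is genuinely a consequence of unicriticality, not of rotational bookkeeping. (The count does work once you know all $d-1$ ``wraps'' occur in one gap --- if $g$ is the length of that gap and $g'$ of its image gap, then $dg = g' + (d-1)$, so $g \geq (d-1)/d$ --- but knowing that the criticality concentrates in one gap is the unicriticality hypothesis in disguise.) Second, Theorem \ref{no adj rot} does not eliminate the straddling configuration: its statement and proof concern two rotational sets, each contained in one of two adjacent sectors (``both with a rotational set within it''), and its engine, Theorem \ref{unique rotation number}, prohibits a nonzero rotation set and a fixed point existing in the \emph{same} sector --- it says nothing about a sector containing a fixed point together with only \emph{some} vertices of a rotation set, which is exactly the situation a straddling $P$ would create.

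Both problems dissolve if you invoke Definition \ref{unicritical} directly: compatibility with the all-critical $d$-gon means its sides meet $P$ at most in vertices, and since the convex hull of $P$ is connected, $P$ must lie in the closure of a single critical sector --- if $P$ had vertices interior to two distinct sector arcs, a chord of its hull would cross a side of the $d$-gon. This one observation replaces both the unproven length estimate and the appeal to Theorem \ref{no adj rot}: the sector arc has length $1/d$, so the vertex span of $P$ is at most $1/d$, the spanning side is the major by the Lemma preceding Theorem \ref{thm: co-root}, and your application of Theorem \ref{unique rotation number} finishes exactly as you wrote. Note that the paper itself supplies no argument (it asserts the corollary ``follows immediately'' from Theorem \ref{no adj rot}), and the substance of the intended argument is sector confinement plus uniqueness of the rotation number per sector, which your proof captures once the confinement step is repaired as above.
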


The following theorem follows from Definitions \ref{unicritical} and \ref{locunicritical}.

\begin{thm} \label{local to global}
    A polygon (or leaf) is compatible with a locally unicritical lamination if, and only if it is compatible with a globally unicritical lamination.
\end{thm}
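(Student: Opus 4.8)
The plan is to reduce both sides of the equivalence to a single local picture: in either kind of lamination the rotational polygon (or leaf) $P$ must sit inside a \emph{terminal} critical sector, i.e.\ a region cut off by one critical chord $c$ of length $\tfrac1d$ together with the arc of length $\tfrac1d$ it subtends, and the combinatorics of $P$ is governed by $c$ alone. The first step I would record is that an all-critical $d$-gon is necessarily \emph{regular}: its $d$ vertices lie in a single fiber $\sigma_d^{-1}(p)$ and so are spaced $\tfrac1d$ apart, whence every side is a critical chord of length $\tfrac1d$ and cuts off a terminal critical sector. Since a rotational $P$ cannot have all-critical vertices (its rotation number is nonzero) and cannot lie in the interior of the $d$-gon, compatibility of $P$ with a globally unicritical lamination forces $P$ into one of the $d$ petals of its all-critical $d$-gon, bounded by a single side $c$; this is exactly how the locally unicritical definition places $P$ in the petal of a stand-alone terminal critical chord $c$. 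In both configurations $P$ spans an arc of length less than $\tfrac1d$ and is bounded by a critical chord $c$ of precisely that length.

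I would then prove each implication by keeping $c$ and $P$ fixed and only re-housing the remaining criticality, rebuilding the lamination by the pullback construction of \cite{Burdette:2022}. For locally unicritical $\Rightarrow$ globally unicritical: given $P$ in the petal of a terminal critical chord $c$ joining $a$ to $a+\tfrac1d$, extend $c$ to the regular all-critical $d$-gon $D$ with vertices $a,\,a+\tfrac1d,\dots,a+\tfrac{d-1}{d}$ and discard the old all-critical $k$-gon. Because $P$ lies on the short arc cut off by $c$ while the remaining edges of $D$ leave $a$ and $a+\tfrac1d$ into the complementary region, $D$ meets $P$ only among the endpoints of $c$, so $D$ is a critical portrait compatible with $P$; its pullback is a sibling $d$-invariant lamination compatible with an all-critical $d$-gon, hence globally unicritical, and still containing $P$. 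For the converse, given $P$ compatible with a globally unicritical lamination, the Corollary to Theorem~\ref{no adj rot} shows that all fixed points are subtended by a single side of $P$, so the region complementary to the petal of $c$ is free; I keep the bounding side $c$ as a terminal critical chord and place an all-critical $(d-1)$-gon in that complementary region. Since $(d-1)-1+1=d-1$ the criticality balances, $k=d-1<d$, and the $(d-1)$-gon avoids $P$, so the pullback is a locally unicritical lamination containing $P$.

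The main obstacle I anticipate is not the placement of critical chords---which is forced by the regularity of all-critical polygons and by the one-sidedness of the fixed points---but confirming that each reconfigured critical portrait pulls back to an \emph{admissible} lamination, i.e.\ one satisfying condition (4) of Definition~\ref{def: Sibling invariant lam} (finite equivalence classes and no stray limit leaves). Here I would work in the canonical framework: taking the all-critical polygons to touch vertices of $P$ makes each portrait canonical, and Theorems~\ref{Compatible} and~\ref{CPPullback} guarantee that canonical portraits pull back uniquely to genuine sibling $d$-invariant laminations, just as in the fixed-point analysis of Theorem~\ref{CLP}. The only computation left is to verify that in each critical interval determined by the chosen portrait there is exactly one non-crossing pre-image leaf for each side of $P$, so that the two constructions genuinely land in the stated classes; this bookkeeping parallels the proof of Theorem~\ref{CPPullback}, and I would leave it to the reader.
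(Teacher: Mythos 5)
Your proposal is correct and takes essentially the same route as the paper, which offers no written argument beyond the remark that the statement ``follows from Definitions \ref{unicritical} and \ref{locunicritical}'': the point in both settings is that the rotational polygon sits in a terminal critical sector cut off by a single critical chord of length $1/d$, so the remaining criticality can be re-housed freely and the lamination rebuilt by the pullback scheme, exactly as you do. One small inaccuracy worth fixing: a rotational polygon \emph{can} share a vertex with the all-critical $d$-gon (this is precisely the canonical configuration where the critical polygon touches a vertex of $P$), but your conclusion that $P$ lies in the closure of a single petal survives, since any chord joining points of two distinct petal arcs would cross a side of the $d$-gon.
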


% \begin{proof}

% ($ \rightarrow$) Starting with a locally unicritical polygon, 

%    ($\leftarrow$) If a polygon is compatible with a globally unicritical lamination, then by reducing the criticality, we have a locally unicritical lamination.
% \end{proof}

The global correspondence between unicritical rotational laminations and maximally critical rotational laminations was shown in \cite{Burdette:2022}. The language used between that paper and this paper differ. The global correspondence is stated in the following two theorems:

\begin{thm}[SCM to MAC Lamination]\cite{Burdette:2022} \label{global correspondence1}
Let $\mathcal{S}$ be a canonical SCM lamination containing a rotational SCM polygon, $P$. Then, there exists a unique corresponding MAC lamination, $\mathcal{L}$, with a leaf $M$ as its MAC leaf. 
\end{thm}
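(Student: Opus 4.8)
The plan is to construct $\mathcal{L}$ and its MAC leaf $M$ explicitly from the rotational data of $P$, and then to establish uniqueness using the rigidity results already in hand. Since $\mathcal{S}$ is (globally) unicritical, all of its criticality is concentrated in a single all-critical $d$-gon, and the rotational polygon $P$ carries a fixed rotation number $\rho(P)=p/q$. The first step is to extract three pieces of data from $P$: its vertex set (a rotational set for $\sigma_d$), its major $M$ in the sense of Definition \ref{majmin} (the side closest to critical length), and, by the Co-root Theorem \ref{thm: co-root}, the $d-2$ co-roots sitting on the boundary of the central gap. Together the major and the $d-2$ co-roots supply $d-1$ distinguished ``sites'' on the boundary of the central gap; these are precisely the locations where the distributed criticality of a maximally critical lamination must live.

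Second, I would define the MAC critical portrait $C$ by replacing the single all-critical $d$-gon of $\mathcal{S}$ with $d-1$ disjoint critical chords of length $1/d$, one placed so that its grand orbit roots a degree-$2$ Fatou gap at the major $M$ and one rooting such a gap at each of the $d-2$ co-roots. Since each chord is compatible with the vertices of $P$, I may take $F=P$ and form the pullback scheme $PB(F,C)$; the pullback theorem cited from \cite{Burdette:2022} then yields that $\mathcal{L}=\overline{F_\infty}$ is a sibling $d$-invariant lamination. I would then check that $\mathcal{L}$ is maximally critical: each of the $d-1$ critical chords generates a degree-$2$ Fatou gap on a side of the polygon determined by $P$, and assigning the chords to the distinct sites (major versus distinct co-roots) places these $d-1$ gaps in $d-1$ separate grand orbits, which is exactly the defining condition of a MAC lamination. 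The leaf $M$ is the MAC (major) leaf by construction.

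Third, for uniqueness I would argue that the rotation number $p/q$ together with the demand that the criticality be maximally spread pins the configuration down completely. The main inputs are Theorem \ref{unique rotation number}, which forbids two distinct rotation numbers in one critical sector, and Theorem \ref{no adj rot}, which forbids rotational sets in adjacent critical sectors; these dictate where each degree-$2$ critical chord may be inserted relative to the rotational polygon. Combined with the fact that the co-roots are themselves uniquely determined by the unicritical polygon (Theorem \ref{thm: co-root}) and that canonical critical portraits pull back to the same sibling portrait (Theorem \ref{CPPullback}), any MAC lamination realizing the same rotational dynamics is forced to have its major leaf and its $d-1$ degree-$2$ Fatou gaps at exactly these $d-1$ sites, and hence must coincide with $\mathcal{L}$ and carry the same $M$.

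The step I expect to be the main obstacle is verifying that the distributed critical portrait pulls back to a lamination whose $d-1$ degree-$2$ Fatou gaps genuinely lie in \emph{separate} grand orbits and whose induced equivalence relation is still closed with finite classes (condition (4) of Definition \ref{def: Sibling invariant lam}). In the unicritical case the single polygon automatically keeps the rotational orbit intact, but once the criticality is split one must confirm that no two of the new Fatou gaps are ever identified under forward iteration and that the pullback leaves continue to shrink to points rather than accumulating on a limit leaf. I would control the former with the order-preserving property of $\sigma_d$ on gaps (certifying the sites stay in distinct orbits) and the latter with length estimates under $\sigma_d$ of the kind used in Theorem \ref{CLP}(2), which bound the $n$-th pullback leaves and thereby rule out limit leaves.
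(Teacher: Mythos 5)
There is a genuine gap, and it occurs in your very first sentence: you have the paper's dictionary backwards. The paper states explicitly that ``an SCM lamination is a maximally critical lamination, and a MAC lamination is a (globally) unicritical lamination.'' Theorem \ref{global correspondence1} therefore takes as \emph{hypothesis} a maximally critical lamination $\mathcal{S}$ --- one whose rotational polygon carries $d-1$ degree-$2$ Fatou gaps in separate orbits --- and must \emph{produce} the unicritical lamination $\mathcal{L}$, whose criticality is a single all-critical $d$-gon with the leaf $M$ as its MAC (major) leaf. Your argument opens with ``Since $\mathcal{S}$ is (globally) unicritical, all of its criticality is concentrated in a single all-critical $d$-gon,'' and then uses the Co-root Theorem \ref{thm: co-root} and the Generalized Lavaurs algorithm to split that concentrated criticality into $d-1$ critical chords placed at the major and the $d-2$ co-roots. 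That is exactly the construction the paper sketches for the \emph{opposite} direction: it is the content of Theorem \ref{global correspondence2} (MAC to SCM), compare the discussion before Figure \ref{fig:global correspondence}, where the co-roots together with the original vertices supply the vertices of the maximally critical $k(d-1)$-gon. A proof of Theorem \ref{global correspondence1} must run in reverse: starting from the SCM $k(d-1)$-gon $P$ with its adjacent majors and distributed degree-$2$ criticality, identify which side plays the role of $M$, collapse the $d-1$ critical chords into a single compatible all-critical $d$-gon under $M$, pull back, and prove uniqueness of the resulting unicritical lamination. Your uniqueness step likewise pins down the maximally critical configuration given unicritical data, which is the converse of what is asked.

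A secondary remark: this paper never proves Theorem \ref{global correspondence1}; it is imported verbatim from \cite{Burdette:2022} and used here only as a black box, together with Theorem \ref{local to global}, to deduce the local correspondence theorems. So there is no in-paper proof to match your proposal against; but judged on its own terms, what you have written is (modulo the real obstacles you candidly flag --- keeping the $d-1$ Fatou gaps in separate grand orbits and verifying condition (4) of Definition \ref{def: Sibling invariant lam}) an outline for Theorem \ref{global correspondence2}, not for the stated theorem. If you rewrote it with the roles of $\mathcal{S}$ and $\mathcal{L}$ interchanged, the co-root machinery would still be relevant --- the co-roots of the sought unicritical lamination must land on the distinguished vertices of $P$ --- but the construction and the uniqueness argument would both need to be reorganized around recovering the single all-critical $d$-gon from the distributed portrait rather than the other way around.
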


\begin{thm}[MAC to SCM Lamination]\cite{Burdette:2022} \label{global correspondence2}
\label{thm: MAC to SCM Lamination}
Let $\mathcal{L}(M)$ be a MAC lamination with the MAC leaf $M$. There is a canonical SCM lamination $\mathcal{S}(M)$ that contains an SCM rotational $k(d-1)$-gon polygon $P(M)$. Here $k$ is the period of when the rotational polygon returns to itself.

\end{thm}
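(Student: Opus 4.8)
The plan is to invert the construction underlying Theorem \ref{global correspondence1}, building the unicritical lamination directly from the combinatorics carried by the MAC leaf $M$. First I would extract from $M$ its image $m=\sigma_d(M)$, its period $k$ under $\sigma_d$, and the rotation number $p'/k$ (in lowest terms, so $\gcd(p',k)=1$) that the maximally critical structure assigns to it. Because $\mathcal{L}(M)$ is maximally critical, its criticality is realized by $d-1$ degree-two Fatou gaps lying in separate orbits; the bookkeeping step records how these $d-1$ orbits are deployed around $\ucirc$ relative to the endpoints of $M$ and $m$, since it is exactly this deployment that the unicritical lamination must reproduce once the criticality is concentrated.

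Next I would discard the spread-out critical data and install a single all-critical $d$-gon, chosen canonically so that it touches the vertices of the rotational polygon being built. The Generalized Lavaurs' Algorithm (the Remark following Theorem \ref{thm: co-root}) then promotes $m$ to the minor of a genuinely unicritical lamination, and the pullback scheme of the cited construction theorem produces a sibling $d$-invariant lamination $\mathcal{S}(M)$. That $\mathcal{S}(M)$ is genuinely SCM, and not merely locally unicritical, follows from Theorem \ref{local to global}: any polygon compatible with the all-critical $d$-gon is compatible with a globally unicritical lamination.

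The heart of the argument, and the step I expect to be hardest, is to locate the rotational polygon $P(M)$ inside $\mathcal{S}(M)$ and to prove it is exactly a $k(d-1)$-gon. The mechanism I would argue for is that concentrating all of the criticality into one all-critical $d$-gon forces a $(d-1)$-fold replication of the rotational orbit: the single period-$k$ orbit visible in $\mathcal{L}(M)$ must appear in $\mathcal{S}(M)$ as $d-1$ interleaved period-$k$ orbits, one associated to each of the original degree-two gap orbits, giving $k(d-1)$ vertices with common rotation number $p'/k$. Concretely, labeling the vertices cyclically and writing the action of $\sigma_d$ as a rotation by $(d-1)p'$ positions among $k(d-1)$ vertices exhibits exactly $d-1$ cycles each of length $k$, which both produces the count and explains why $k$ is the period with which the polygon returns to itself. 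The delicate point is to show this replication is forced rather than optional: I would use Theorem \ref{unique rotation number} to prevent two rotation numbers from sharing a critical sector and Theorem \ref{no adj rot} to forbid rotational sets in adjacent critical sectors, so that the $d$ sectors cut out by the all-critical $d$-gon admit no placement of the returning orbit other than the interleaved one, with no collisions or crossings.

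Finally I would verify that $P(M)$ satisfies Definition \ref{rotational poly}, namely that $\sigma_d$ maps it to itself preserving circular order with the single nonzero rotation number $p'/k$, and that $\mathcal{S}(M)$ is canonical and unique: once the canonical all-critical $d$-gon is fixed, well-definedness of the pullback scheme pins down $\mathcal{S}(M)$, in the spirit of the canonical fixed-point setting of Theorems \ref{Compatible} and \ref{CPPullback}. The principal obstacle throughout is the vertex count; I would guard against overcounting by isolating the coprimality condition $\gcd(p',k)=1$ at the outset and feeding it into the cycle computation, since it is precisely this condition that keeps the $d-1$ orbits distinct and of full period $k$.
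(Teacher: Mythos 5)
You have swapped the two acronyms, and the swap inverts your entire construction. The paper states explicitly, immediately after the two correspondence theorems, that \emph{an SCM lamination is a maximally critical lamination, and a MAC lamination is a (globally) unicritical lamination}. Your proposal treats $\mathcal{L}(M)$ as the maximally critical lamination (``its criticality is realized by $d-1$ degree-two Fatou gaps'') and $\mathcal{S}(M)$ as the unicritical one (``install a single all-critical $d$-gon''), so what you actually sketch is the reverse direction --- essentially the content of Theorem \ref{global correspondence1} --- rather than the statement at hand. Here the hypothesis is a \emph{unicritical} lamination equipped with its distinguished major leaf $M$ (note that Definition \ref{majmin} defines major/minor only in the unicritical setting, which should have signaled which side the data lives on), and the conclusion is a \emph{maximally critical} lamination $\mathcal{S}(M)$ containing the rotational $k(d-1)$-gon $P(M)$ whose sides carry the $d-1$ degree-two Fatou gaps in separate orbits. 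Your appeal to Theorem \ref{local to global} to certify that $\mathcal{S}(M)$ is ``genuinely SCM, not merely locally unicritical'' confirms the mix-up, since that theorem concerns unicriticality, not maximal criticality.

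On the substance: this paper gives no proof of the theorem (it is quoted from \cite{Burdette:2022}), but the text surrounding Figure \ref{fig:global correspondence} states the intended mechanism, which your proposal never invokes in the correct direction. The Co-root Theorem \ref{thm: co-root}, via the Generalized Lavaurs' algorithm, supplies $d-2$ co-roots on the boundary of the central gap of the unicritical lamination; each is fixed under the first return map, hence has orbit of period $k$, and these $k(d-2)$ co-root orbit points together with the $k$ original vertices of the unicritical rotational polygon are precisely the $k(d-1)$ vertices of $P(M)$. The canonical pullback with adjacent majors on sides of $P(M)$ then produces the maximally critical structure. Your ``forced replication'' mechanism --- deducing a $(d-1)$-fold multiplication of the orbit from Theorems \ref{unique rotation number} and \ref{no adj rot} --- does not substitute for this: nothing about concentrating or spreading criticality replicates a periodic orbit, and those two theorems constrain where rotational sets may sit, not how many orbits must appear. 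The one piece of your argument that survives is the cycle arithmetic: the vertex set of the true $P(M)$ is indeed a union of $d-1$ interleaved period-$k$ orbits, and $\gcd\bigl((d-1)p',\,k(d-1)\bigr)=d-1$ does yield $d-1$ cycles of length $k$; but you attach this structure to the wrong lamination and derive it from the wrong source --- the extra vertices exist because co-roots exist, not because a counting constraint forces them.
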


In the language of this paper, an SCM lamination is a maximally critical lamination, and a MAC lamination is a (globally) unicritical lamination.

% Below is a figure displaying the global correspondence shown in \cite{Burdette:2022}. The globally unicritical polygon on the left is expanded into the globally maximally critical polygon on the right by connecting the vertices of the co-roots with the vertices of the original polygon. 

\begin{figure}[H]
    \centering
    \includegraphics[width=2in]{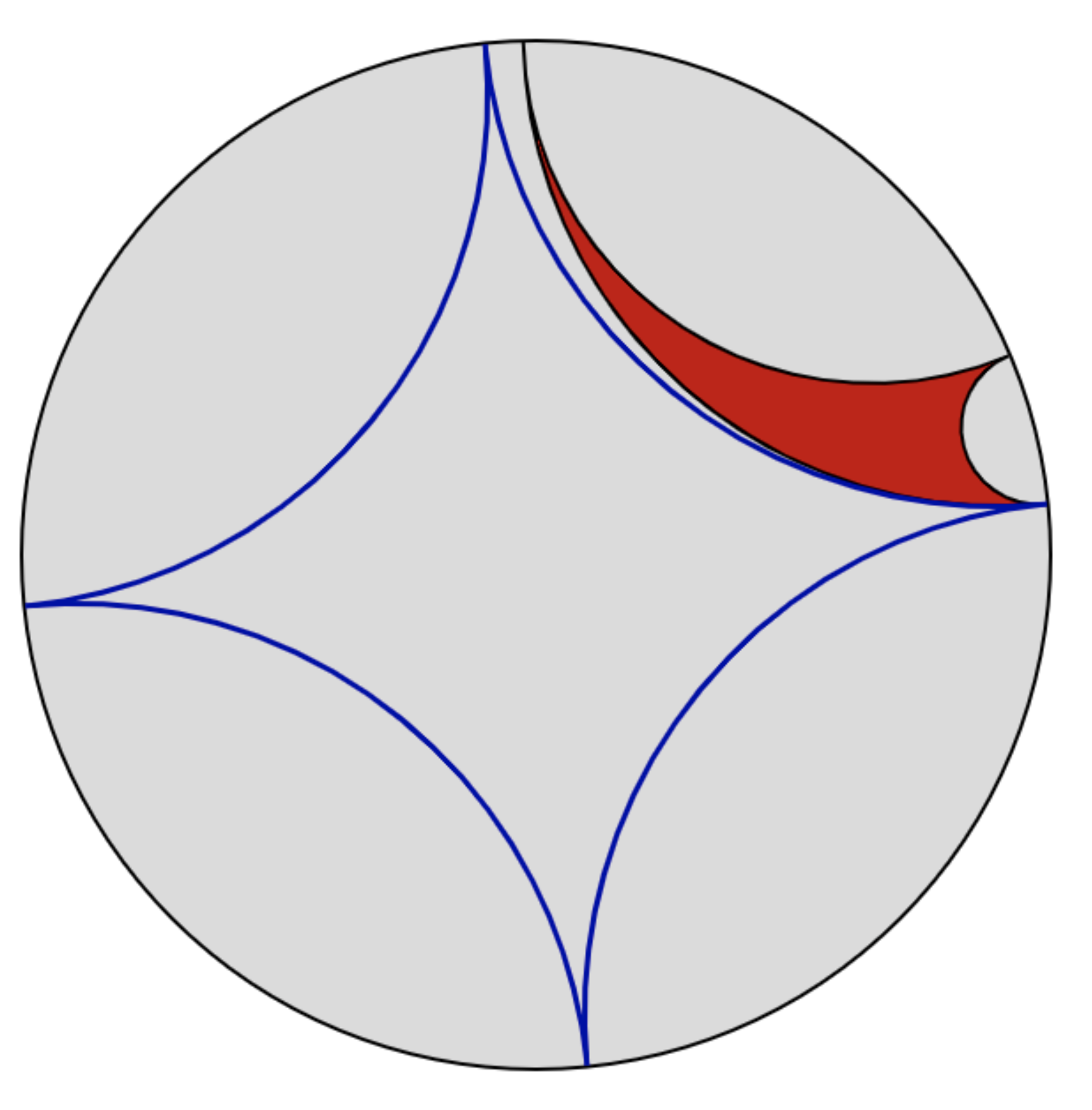}
    \includegraphics[width=2in]{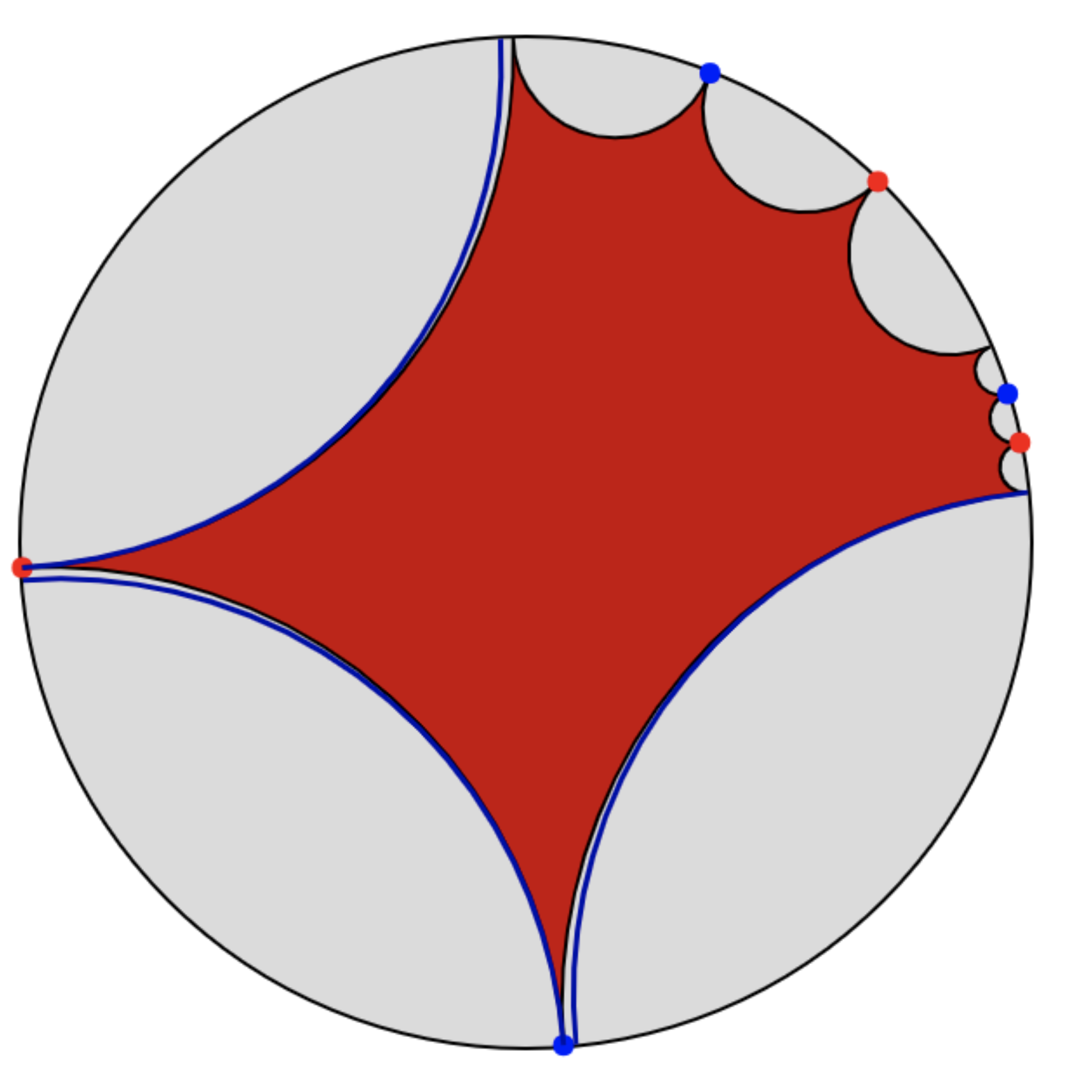}
    \caption{A globally unicritical rotational polygon (left) and a globally maximally critical rotational polygon (right).}
    \label{fig:global correspondence}
\end{figure}

Now, we wish to establish the local correspondence. Comparing Figures \ref{fig:global correspondence} and \ref{fig:fixed flowers}, we can visually see what changes when we are in the local case. 

In the global case, the Generalized Lavaur's algorithm \cite{Bhattacharya:2021} gives $d-2$ co-roots for globally unicritical rotational polygons. These co-roots and the original vertices of the rotational polygon give the vertices for the maximally critical rotational polygon. See Figure \ref{fig:global correspondence}.

In the local case, we can again use the Generalized Lavaur's algorithm to find $d-2$ co-roots. However, not all will be compatible with our locally unicritical polygon. Here, there will be $d^\prime-2$ (local degree) compatible co-roots for the locally unicritical rotational polygon. These compatible co-roots for the local case can be used to find an associated locally maximally critical polygon. See Figure \ref{fig:fixed flowers}.

\begin{thm}\label{find local coroots}
Let $\lam$ be a canonical lamination with a locally unicritical rotational polygon. Then, there exists a canonical lamination with a locally maximally unicritical $k(d^\prime-1)$-gon.

\end{thm}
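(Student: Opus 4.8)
The plan is to establish the local correspondence by reducing to the global correspondence of Theorems \ref{global correspondence1} and \ref{global correspondence2} and then transporting the resulting structure back into the ambient degree $d$ lamination. Write $d'$ for the local degree of the terminal critical sector that contains the locally unicritical rotational polygon $P$, and let $k$ be the period of $P$. First I would invoke Theorem \ref{local to global}: since $P$ is compatible with the locally unicritical lamination $\lam$, it is also compatible with a globally unicritical lamination, so $P$ may be regarded as a genuinely unicritical rotational $k$-gon for the map $\sigma_{d'}$. This reduction is what allows the global machinery to apply verbatim at the local degree.

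Next I would run the Generalized Lavaur's Algorithm (Remark \ref{thm: lavaurs}) together with the Co-root Theorem \ref{thm: co-root} at degree $d'$ to produce the co-roots associated to $P$. Applied in the ambient degree $d$, the algorithm furnishes $d-2$ co-roots, but only those lying in the critical sector containing $P$ are compatible with the locally unicritical structure; I would show that there are exactly $d'-2$ such compatible co-roots. The major of $P$ together with these $d'-2$ co-root orbits marks the $d'-1$ sides along which the criticality will be redistributed. Taking the union of the $k$ vertices of $P$ with the $d'-2$ co-root orbits, each of period $k$, yields $k+(d'-2)k=k(d'-1)$ points, the candidate vertices of the desired polygon $Q$.

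I would then verify that $Q$ is a rotational $k(d'-1)$-gon carrying the same nonzero rotation number as $P$: because each co-root is fixed by the first-return map and $\sigma_d$ preserves circular order on gaps, the resulting vertex set maps to itself respecting circular order with the rotation number of $P$. Splitting the criticality that had been concentrated in the all-critical $k$-gon into $d'-1$ separate degree $2$ Fatou gaps, one attached along each of the $d'-1$ marked sides, and pulling back along the canonical critical portrait produces a canonical lamination in which $Q$ is a locally maximally critical $k(d'-1)$-gon with its $d'-1$ degree $2$ Fatou gaps in separate orbits. At this step I would appeal to Theorem \ref{no adj rot} and Theorem \ref{unique rotation number} to guarantee that the new rotational structure remains confined to the single critical sector of $P$ and does not collide with rotation data in adjacent sectors.

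The step I expect to be the main obstacle is the localization itself. The global correspondence is proved for a lamination in which \emph{every} critical point participates, whereas here only $d'-1$ of the $d-1$ critical points belong to the sector of $P$; I must check that transplanting the degree $d'$ maximally critical structure into the degree $d$ lamination neither forces incompatible co-roots from outside the sector nor violates the sibling-invariance and finite-equivalence conditions required of the pullback. The uniqueness of the rotation number in each critical sector (Theorem \ref{unique rotation number}) and the impossibility of rotation in adjacent sectors (Theorem \ref{no adj rot}) are precisely the tools I would use to confine the construction and finish the compatibility verification.
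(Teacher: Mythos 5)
Your proposal takes essentially the same route as the paper, whose entire proof is the two-line reduction you lead with: apply Theorem \ref{local to global} to pass to the globally unicritical setting and then invoke the global correspondence of Theorem \ref{global correspondence2}. The extra detail you supply (the Generalized Lavaur's Algorithm producing $d-2$ co-roots of which exactly $d'-2$ are compatible, giving the $k + (d'-2)k = k(d'-1)$ vertices) is precisely the paper's surrounding narrative discussion, so it fills out rather than diverges from the published argument.
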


\begin{proof}
    This follows from Theorems \ref{local to global} and \ref{global correspondence2}.
\end{proof}

\begin{rem}
    As in the global correspondence, the locally maximally critical polygon will have adjacent major leaves.
\end{rem}

\begin{thm}
Given a canonical lamination, $\lam$, with a locally maximally critical polygon with adjacent majors, there exists a canonical lamination with a unique locally unicritical rotational polygon.

\end{thm}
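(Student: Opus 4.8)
The plan is to run the proof of Theorem~\ref{find local coroots} in reverse, trading the use of Theorem~\ref{global correspondence2} (MAC to SCM) for its partner Theorem~\ref{global correspondence1} (SCM to MAC). First I would unpack the hypotheses into the data the global correspondence expects: a canonical lamination $\lam$ carrying a locally maximally critical polygon means that, inside some non-terminal critical sector of local degree $d^\prime$, there sits a rotational polygon with $d^\prime-1$ degree-two Fatou gaps in separate orbits on its sides, and the adjacent-majors hypothesis says the corresponding major leaves are adjacent. The crucial reduction is to view this configuration as a \emph{global} maximally critical (SCM) configuration for the map $\sigma_{d^\prime}$: the local critical sector of degree $d^\prime$ plays the role of the entire disk for a degree $d^\prime$ maximally critical lamination.

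Second, with that reduction in hand I would invoke Theorem~\ref{global correspondence1}. Applied to the SCM polygon supplied by the previous step, it produces a \emph{unique} corresponding globally unicritical (MAC) lamination together with a distinguished major leaf $M$, and hence a unicritical rotational polygon of the appropriate period $k$. Concretely, deleting the $d^\prime-2$ co-roots identified by the Generalized Lavaurs algorithm (Remark~\ref{thm: lavaurs}) from the vertices of the maximally critical $k(d^\prime-1)$-gon is exactly what this correspondence accomplishes, and the adjacent-majors hypothesis is precisely what pins down which vertices are co-roots and which are the surviving vertices of the unicritical $k$-gon.

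Third, I would localize back into the ambient degree $d$ lamination. By Theorem~\ref{local to global}, a polygon (or leaf) compatible with a globally unicritical lamination is compatible with a locally unicritical lamination; so the unicritical polygon obtained above is compatible with a locally unicritical lamination, and pulling it back through the canonical critical portrait of $\lam$ yields the desired canonical lamination with a locally unicritical rotational polygon. Uniqueness of that polygon is inherited directly from the uniqueness clause of Theorem~\ref{global correspondence1}, since the reduction and the localization steps are both canonical and reversible.

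The step I expect to be the main obstacle is the first one: justifying rigorously that a \emph{locally} maximally critical polygon in a degree $d$ lamination may be treated as a \emph{globally} maximally critical polygon for $\sigma_{d^\prime}$. Theorem~\ref{local to global} supplies exactly this local-to-global dictionary on the unicritical side, so the real work is to establish (or cite from \cite{Burdette:2022}) the analogous dictionary on the maximally critical side and to verify that the adjacent-majors condition is preserved under passage between the local critical sector and the global degree $d^\prime$ disk. Once that equivalence is secured, the remaining steps are formal applications of the already-proved global correspondence and of Theorem~\ref{local to global}.
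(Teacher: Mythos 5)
Your proposal takes essentially the same route as the paper: the paper's entire proof of this theorem is the single line ``This follows from Theorems~\ref{local to global} and~\ref{global correspondence1},'' and your three steps are exactly that reduction spelled out in detail. The obstacle you flag at the end --- that Theorem~\ref{local to global} supplies the local-to-global dictionary only on the unicritical side, so the maximally critical analogue needs separate justification --- is a genuine subtlety, but it is one the paper itself leaves implicit, so your write-up is if anything more careful than the published argument.
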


\begin{proof}
    This follows from Theorems \ref{local to global} and \ref{global correspondence1}.
\end{proof}

\begin{figure}[H]
    \centering
    \includegraphics[width = 2in]{images/s4_loc_uni_rabbit.png}
     \hspace{.1 in}
    \includegraphics[width = 2in]{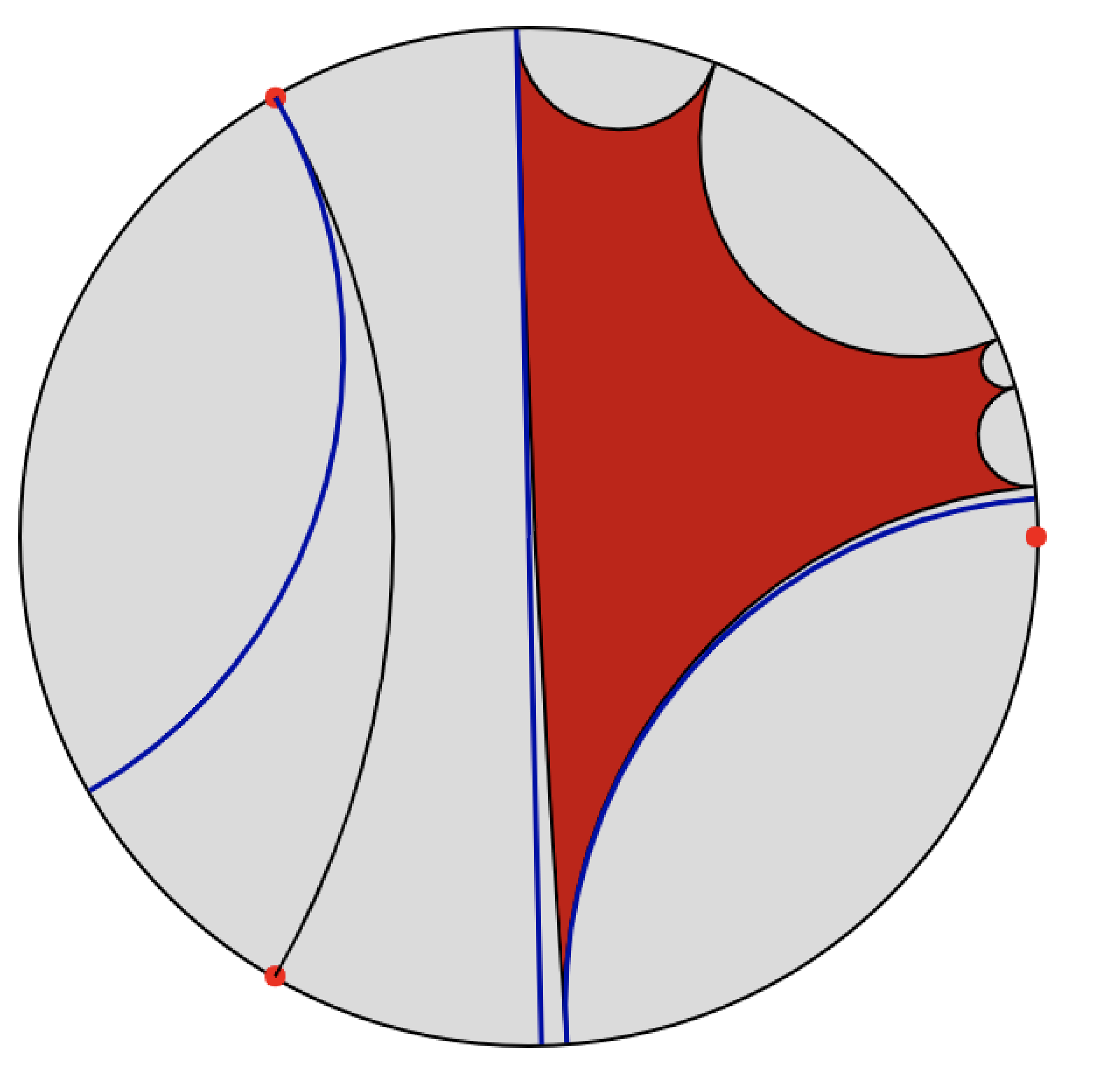}

    \caption[Simplest Fixed Point Portrait]{Here we have our locally unicritical rotational polygon from Figure \ref{loc uni full diagram}} (left) and the corresponding locally maximally critical polygon (right).
    \label{fig:fixed flowers}
\end{figure}

\begin{rem}
    The global count of fixed point portraits in Subsection \ref{globalcount} can be extended to a local count given a locally unicritical rotational polygon whose canonical all critical polygon separates the fixed points.  Details are left to the reader.
\end{rem}

\newpage

\bibliographystyle{amsplain}
%    Insert the bibliography data here.

\end{document}